\newtheorem{thm}{Theorem}[section]
\newtheorem{cor}[thm]{Corollary}
\newtheorem{lem}[thm]{Lemma}
\newtheorem{prop}[thm]{Proposition}
\theoremstyle{definition}
\newtheorem{exam}[thm]{Example}
\newtheorem{rem}[thm]{Remark}
\newtheorem{defn}[thm]{Definition}
\numberwithin{equation}{section}
\let\ds=\displaystyle
\newcommand{\st}{\,:\,}
\newcommand{\mr}{\mathop{\mathrm{mr}}}
\newcommand{\mgr}{\mathop{\mathrm{mgr}}}
\newcommand{\mc}{\mathop{\mathrm{mc}}}
\newcommand{\norm}[1]{\left \Vert #1 \right \Vert}
\begin{document}

\title{Roundness Properties of Banach spaces}

\author[A. Amini-Harandi]{Alireza Amini-Harandi$^{1,2}$}

\address{$^{1}$Department of Pure Mathematics, Faculty of Mathematics and Statistics, University of Isfahan, Isfahan 81746-73441, Iran}
\address{$^{2}$School of Mathematics, Institute for Research in Fundamental Sciences (IPM), P. O. Box: 19395-5746, Tehran, Iran}
\email{a.amini@sci.ui.ac.ir}

\author[I. Doust]{Ian Doust$^{3}$}

\address{$^{3}$School of Mathematics and Statistics, University of New South Wales, Sydney, NSW 2052, Australia}
\email{i.doust@unsw.edu.au}

\author[G. Robertson]{Gavin Robertson$^{3}$}

\email{gavin.robertson@unsw.edu.au}

\begin{abstract}
The maximal roundness of a metric space is a quantity that arose in the study of embeddings and renormings. In the setting of Banach spaces, it was shown by Enflo that roundness takes on a much simpler form. In this paper we provide simple computations of the roundness of many standard Banach spaces, such as $\ell^{p}$, the Lebesgue-Bochner spaces $\ell^{p}(\ell^{q})$ and the Schatten ideals $S_{p}$. We also introduce a property that is dual to that of roundness, which we call coroundness, and make explicit the relation of these properties to the geometric concepts of smoothness and convexity of Banach spaces. Building off the work of Enflo, we are then able to provide multiple non-trivial equivalent conditions for a Banach space to possess maximal roundness greater than $1$. Using these conditions, we are able to conclude that certain Orlicz spaces possess non-trivial values of roundness and coroundness. Finally, we also use these conditions to provide an explicit example of a $2$-dimensional Banach space whose maximal roundness is not equal to that of its dual.
\end{abstract}

\keywords{Roundness, coroundness, smoothness, convexity, Banach spaces}
\subjclass[2010]{46B20}

\maketitle

\section{Introduction}\label{Introduction}

The concept of roundness of a metric space was first introduced by Enflo in the $1960$s in his papers on the infinite dimensional version of Hilbert's 5th problem. In particular, he used roundness to deal with the remaining cases of the result that  no infinite dimensional $L^p$ space is uniformly homeomorphic to any $L^q$ space (with $1 \le p,q$ and $p \ne q$) \cite{Lind,E1,E2}. However, this use of roundness in the setting of uniform homeomorphisms is rather misleading about the nature of roundness, since it is actually an isometric invariant of metric spaces rather than a uniform one. It should be noted that roundness was an important precursor to several later concepts, including Rademacher type and cotype of Banach spaces and generalised roundness of metric spaces. In fact, in the setting of Banach spaces roundness is actually a special case of Rademacher type (see Remark \ref{Type and Cotype}). Enflo's use of roundness in the uniform setting can then be explained by a celebrated rigidity theorem of Ribe \cite{R}, which shows that Rademacher type of Banach spaces is invariant under uniform homeomorphisms (even though roundness is not).

The definition of roundness as given by Enflo in \cite{E1} is as follows.

\begin{defn}
	A metric space $(X,d)$ has \textit{roundness} $p\geq 1$, denoted $p\in r(X,d)$, if whenever $a_1,a_2,b_1,b_2$ are in $X$ we have
	\begin{align*}
		d(a_1,a_2)^p+d(b_1,b_2)^p\leq \sum_{1\leq i,j\leq 2}  d(a_i,b_j)^p.
	\end{align*}
\end{defn}

A crucial result is that in the case of a Banach space $(X,\|\cdot\|)$, the above definition takes on a particularly simple form. Enflo \cite{E3} showed that
\begin{itemize}
	\item [(i)] $p\in r(X,\|.\|)$ if and only if
	\[
	\|x+y\|^p+\|x-y\|^p \leq 2(\|x\|^p+\|y\|^p)
	\]
	for all $x,y\in X$,
	\item [(ii)] $p\in r(X,\|.\|)$ implies $p_1\in r(X,\|.\|)$ for all $1 \leq p_1 \leq p$.
\end{itemize}
We remark that the second property above does not hold in general for arbitrary metric spaces (see \cite{E3}). For a Banach space however, by this second property it follows that the study of roundness of the space space really boils down to a study of its maximal roundness. That is, the roundness properties of a Banach space are entirely determined by the quantity
\[\mr(X) = \sup\{p \st \text{$p \in r(X,\norm{\cdot})$}\} .\]
Let us now note some basic properties of $\mr(X)$. Firstly, if $X$ is a Banach space, then $1 \le \mr(X) \le 2$. Secondly, if $X$ and $Y$ are Banach spaces and $X$ embeds isometrically in $Y$ then $\mr(Y) \le \mr(X)$. It is also not too hard to see that $\mr(X) = 2$ if and only if $X$ is a Hilbert space (see \cite{Cl}). Harder is a classification of those spaces for which one obtains the other extreme value $\mr(X) = 1$ (see Section \ref{Nontrivial Roundness} for a classification of such spaces).

Enflo's work surrounding the infinite dimensional version of Hilbert's 5th problem made it clear that the property of having $\mr(X)>1$ was a desirable one. Indeed, many of the important structure theorems involving roundness only apply in the case that the space in question has roundness strictly greater than one. For example, if a locally bounded topological linear space is uniformly homeomorphic to a Banach space with roundness greater than one, then it is a normable space (see \cite{E4} for this and some related results). However, since the property of having $\mr(X)>1$ was sufficient for many of these theorems, the question of how one may explicitly compute the precise value of $\mr(X)$ for a given Banach space did not really feature in Enflo's work.

Following the results of Enflo on the roundness of Banach spaces, the concept of roundness has been studied for other special classes of metric spaces. In \cite{LP} Lafont and Prassidis studied the roundness of two different classes of metric spaces. The first class that they studied was that of geodesic manifolds, where they were able to provide a connection between the topological properties of the space and the values of roundness of the space. For example, they proved that all geodesic spaces of roundness $2$ are contractible, and that every compact Riemannian manifold with roundness greater than $1$ must be simply connected. The second class of metric spaces that they studied was that of Cayley graphs of finitely generated groups. Here they were able to provide connections between the algebraic properties of the group with roundness properties of the Cayley graph. For example, they proved that if every Cayley graph of a group has roundness greater than $1$ then the group must be a torsion group with every element of order $2,3,5$ or $7$. In \cite{HLMRR} Horak, LaRose, Moore, Rooney and Rosenthal studied the more general case of roundness properties of finite connected graphs. Here they were able to explicitly compute the roundness of certain families of graphs, such as the cycles $C_{n}$. They were also able to show that in this setting certain restrictions on the roundness of all finite connected graphs prevailed. For example, no finite connected graph can have roundness strictly between $\log_{2}(3)$ and $2$.

It is worthwhile to note that a theory that runs parallel to the theory of roundness, and which heavily motivates many of the ideas and questions pursued in this paper, is the theory of generalised roundness. For the sake of completeness, we state the definition of generalised roundness here, as originally defined by Enflo in \cite{E2}.

\begin{defn}
	A metric space $(X,d)$ has \textit{generalised roundness} $p\geq 0$, denoted $p\in gr(X,d)$, if whenever $a_{1},\dots,a_{n},b_{1},\dots,b_{n}$ are in $X$ and $n\geq 2$ we have
	\begin{align*}
		\sum_{1\leq i,j\leq n}d(a_{i},a_{j})^{p}+d(b_{i},b_{j})^{p}\leq 2\sum_{1\leq i,j\leq n}d(a_{i},b_{j})^{p}.
	\end{align*}
\end{defn}

By taking $n=2$ in the above definition it is easy to see that for $p\geq 1$ if a metric space has generalised roundness $p$ then it also has roundness $p$. In recent years, the concept of generalised roundness of metric spaces has been the subject of a large amount of research activity (see for example \cite{Do,Do2,Fa,LTW,Mu,Ro,S}). One of the main reasons for this is perhaps the significant embedding implications that generalised roundness carries with it, the main theorem in this area being that a metric space $(X,d)$ has generalised roundness $p$ if and only if $(X,d^{p/2})$ isometrically embeds in a Hilbert space (see \cite{Fa} for a proof of this result). A nontrivial fact is that, just as for roundness of Banach spaces, the theory of generalised roundness of metric spaces boils down to the computation of the maximal generalised roundness of a space, denoted by $\mgr(X)=\sup\{p\geq 0:p\in gr(X,d)\}$.

The computation of $\mgr(X)$ for any particular space is challenging, and has only been done for a rather small collection of spaces. Even for finite metric spaces the problem is usually analytically intractable.
However, S\'{a}nchez in \cite{S} provided a formula for $\mgr(X)$ that allowed for the numerical computation of $\mgr(X)$ for an arbitrary finite metric space. Let us briefly describe this formula now. Suppose that $X=\{x_{0},x_{1},\dots,x_{n}\}$ is a finite metric space, with distance matrix $D = (d(x_i,x_j))_{i,j=0}^n$. For $p>0$, let $D_p$ denote the `$p$-distance matrix', $(d(x_i,x_j)^p)_{i,j=0}^n$. Also, let $\mathbbm{1}$ denote the column vector in $\mathbb{R}^{n+1}$ all of whose coordinates are $1$, and let $\langle\cdot,\cdot\rangle$ denote the standard inner product on $\mathbb{R}^{n+1}$. S{\'a}nchez \cite{S} showed that if $\mgr(X)<\infty$ then
	\[ \mgr(X) = \min\{p>0 \,:\, \text{$\det(D_p) = 0$ or $\langle D_{p}^{-1}\mathbbm{1},\mathbbm{1}\rangle$}=0\}.
	\]
For several special families of metric spaces, such as the bipartite graphs $K_{m,n}$, this formula even allows for a precise (algebraic) computation of the maximal generalised roundness of the space (see \cite{S}). As such, since the conception of this formula, the state of knowledge pertaining to the maximal generalised roundness of finite metric spaces has improved. For explicit results obtained using this formula, see \cite{Do,Fa,Mu}. A related formula is given in \cite{Ro}.
	
In this article, the class of metric spaces whose roundness properties we will study is that of Banach spaces. The main motivating question for much of the work below is the question of how one may actually compute the quantity $\mr(X)$ for a given Banach space. However, as was the case with the maximal generalised roundness of metric spaces (at least, before the above formula of S\'{a}nchez), this is in general an extremely difficult task. Of course, for certain Banach spaces such as $L^{p}$ spaces and other similar spaces, a computation of $\mr(X)$ can be achieved through classical inequalities such as Minkowski's Inequality and Clarkson's Inequalities and so on. However, in the grand scheme of things the list of Banach spaces for which such an approach is feasible is extremely short. Even for $2$-dimensional Banach spaces, the task of determining whether or not a space has a given value of roundness may be impossible (see for example the $2$-dimensional Banach spaces in Section \ref{Further Examples}). In fact, even proving that $\mr(X)>1$ is in general an extremely difficult task. Thus, it seems that the best that one can hope for is perhaps a formula analogous to that of S\'{a}nchez' formula in the setting of roundness of Banach spaces. In this article, we take the first steps towards this goal, and focus on developing a systematic approach to determining whether or not $\mr(X)>1$ for a given Banach space.

\subsection{Overview of the article}

In Section \ref{The Modulus Of Roundness} we introduce a geometric function related to the theory of roundness, and derive many of its basic properties. In doing so, we obtain new simple proofs of many of the known elementary properties of roundness. In fact, this section makes it apparent that the theory of roundness of Banach spaces is really just a small part of the theory of these geometric functions.

As stated earlier, the computation of the maximal roundness of certain spaces is accessible through classical inequalities. Even so, there does not seem to be a unified approach to this task for many of these standard spaces. To address this problem we introduce a number of new concepts. The first, which is suggested by Clarkson's inequalities for $L^p$ spaces, is a variant we call Clarkson roundness. Using this we give a complete listing of the maximal roundness for all the Lebesgue--Bochner spaces $\ell^{p}(\ell^{q})$ and the Schatten ideals $S_p$, extending the known results for $L^p$ spaces. This is the content of Section \ref{Roundness Of Standard Spaces}.

In Section \ref{Duality and Coroundness} we introduce a second new concept, a property that is dual to that of roundness, which we call coroundness. It turns out that the roundness and coroundness of a Banach space greatly affect its geometry. We will show, for example, that if a Banach space $X$ has maximal roundness greater than 1 then it is uniformly smooth, and if it has finite minimal coroundness then it is uniformly convex.

Expanding upon results of Enflo concerning the maximal roundness of a Banach space, in Section \ref{Nontrivial Roundness} we focus on the problem of characterising those spaces for which $\mr(X)=1$. Here we provide multiple new characterisations of when $\mr(X)=1$ in terms of many other Banach space properties such as $p$-uniform smoothness and the modulus of uniform smoothness.

Finally, in Section \ref{Further Examples} we employ the results of the previous section to provide new and interesting examples of Banach spaces that possess non-trivial values of roundness and coroundness. We are able to provide two explicit Orlicz spaces (both different from $L^{p}$ spaces), one having non-trivial roundness and the other having non-trivial coroundness. For all the standard spaces for which the maximal roundness is known, it is noticeable that the space and its dual have the same maximal roundness. The final example in Section \ref{Further Examples} is a $2$-dimensional Banach space for which the methods of the previous section allow us to conclude that the maximal roundness of this space is not equal to that of its dual.

\subsection{Notation}

In what follows, $X$ is always assumed to be a real Banach space. If $X$ is a Banach space then $B_{X}=\{x\in X:\|x\|\leq 1\}$ will be used to denote the closed unit ball in $X$, and $S_{X}=\{x\in X:\|x\|=1\}$ the unit sphere.

If $p>1$ then $p'$ will be used to denote the conjugate index satisfying $\frac{1}{p}+\frac{1}{p'}=1$ (we will also at times use the convention that if $p=1$ then $p'=\infty$ and vice versa).

If $f,g$ are real-valued functions defined on some subset of $\mathbb{R}$ then for $a\in\mathbb{R}$ we will say that $f=O(g)$ as $t\rightarrow a$ to mean that there exists some $\epsilon>0$ and $C>0$ such that $|f(t)|\leq C|g(t)|$ for all $t\in(a-\epsilon,a+\epsilon)\cap\mbox{domain}(f)\cap\mbox{domain}(g)$. If $h$ is also a real-valued function defined on some subset of $\mathbb{R}$ then we will say that $f=g+O(h)$ as $t\rightarrow a$ to mean that $f-g=O(h)$ as $t\rightarrow a$. Finally, we will say that $f=\Theta(g)$ as $t\rightarrow a$ if both $f=O(g)$ as $t\rightarrow a$ and $g=O(f)$ as $t\rightarrow a$.

\section{The modulus of roundness}\label{The Modulus Of Roundness}

To fully understand the linear theory of roundness, it is helpful to consider a certain geometric function that we associate with each Banach space. The function we consider resembles generalized von~Neumann--Jordan constants such as $C_{NJ}^{(p)}(X)$ and ${\tilde C}_{NJ}^{(p)}(X)$, as seen in \cite{CHHK} and \cite{YW2}. For our purposes however, it is more useful to consider slightly different scalings of these functions.

\begin{defn}
	Let $(X,\|\cdot\|_{X})$ be a Banach space. We define the \textit{modulus of roundness} of $X$ to be the function $\nu_{X}:[1,\infty)\rightarrow\mathbb{R}$ given by
		\[ \nu_X(p) = \sup\left\{ \frac{\|x+y\|^{p}+ \|x-y\|^{p}}{\|x\|^{p}+ \|y\|^{p}}
		\st \text{$x, y \in X$ not both zero} \right\}.
		\]
\end{defn}
It is not too hard to see that for any $1\leq p<\infty$ one has that
\begin{equation}
\max\{2,2^{p-1}\} \le \nu_X(p) \le 2^p.
\end{equation}
Other immediate consequences of the definition of $\nu_{X}$ are the following.
\begin{itemize}
 \item $\nu_X(1) = 2$ for any Banach space $X$.
 \item $p \in r(X,\norm{\cdot}) \iff \nu_X(p) \le 2 \iff \nu(p) = 2$.
 \item $\mr(X) = \sup\{p \st \nu_X(p) = 2\}$.
 \item $\nu_X(2) = 2\, C_{NJ}(X)$ (see \cite{Cl}).
\end{itemize}

In fact, it is also immediate from the definition of $\nu_{X}$ that $\nu_X(p) = 2^{p-1} C_{NJ}^{(p)}(X)$ (see \cite{CHHK} and \cite{YW2} for the definition of $C_{NJ}^{(p)}(X)$).

As is noted in, for example, \cite{KT2}, it is often helpful to consider such functions in terms of the norm of a suitable operator. For $1 \le p\leq\infty$, let $\ell_2^p(X)$ be $X \times X$ with the norm $\norm{(x,y)}_p = \bigl(\norm{x}^p + \norm{y}^p\bigr)^{1/p}$ (where for $p=\infty$ we mean $\norm{(x,y)}_{\infty}=\max(\norm{x},\norm{y})$). Also, define $T_X:X\times X\rightarrow X\times X$ to have the operator matrix
  \[ T_X = \begin{pmatrix} I_X & I_X \\ I_X & -I_X \end{pmatrix} \]
where here $I_{X}$ denotes the identity operator on $X$. That is, if $(x,y)\in X\times X$ then $T(x,y)=(x+y,x-y)$. In this notation it is easy to see that
	\[ \nu_{X}(p)=\|T_{X}\|^{p}_{\ell^{p}_{2}(X)\rightarrow \ell^{p}_{2}(X)}.
	\]
That is, $\nu_{X}(p)^{1/p}$ is precisely the operator norm of $T_{X}$, when $T_{X}$ is viewed as an operator $T_{X}:\ell^{p}_{2}(X)\rightarrow\ell^{p}_{2}(X)$. The fact that $\ell_2^p(X)^* = \ell_2^{p'}(X^*)$ and $T_{X}^* = T_{X^*}$ allows one to conclude the following duality result pertaining to the modulus of roundness.

\begin{lem}\label{L2-1}
Let $X$ be a Banach space. Then for all $p > 1$
  \[ \nu_X(p)^{1/p} = \nu_{X^*}(p')^{1/p'}.
  \]
\end{lem}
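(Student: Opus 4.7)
The plan is to exploit the operator-theoretic reformulation established just before the statement, namely that $\nu_X(p)^{1/p} = \norm{T_X}_{\ell_2^p(X) \to \ell_2^p(X)}$, and then to invoke the general Banach space principle that $\norm{T} = \norm{T^*}$ for any bounded linear operator. With this in hand, the whole assertion reduces to two identifications: the dual of $\ell_2^p(X)$ as a Banach space, and the adjoint of the specific operator $T_X$.

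First I would record the standard duality $\bigl(\ell_2^p(X)\bigr)^* = \ell_2^{p'}(X^*)$, realised isometrically via the pairing $\langle (x,y), (f,g)\rangle = f(x) + g(y)$. This is essentially the finite-dimensional case of the duality for $\ell^p$-direct sums of Banach spaces, and follows from Hölder's inequality in the index $p$ together with a routine optimisation to obtain sharpness. I would not dwell on this verification, since it is standard.

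Next I would compute $T_X^*$ under this pairing. For any $(x,y) \in X\times X$ and $(f,g) \in X^*\times X^*$, a direct unpacking gives
\[
\langle T_X(x,y), (f,g)\rangle = f(x+y) + g(x-y) = (f+g)(x) + (f-g)(y) = \langle (x,y), (f+g, f-g)\rangle,
\]
so $T_X^*(f,g) = (f+g, f-g)$, which is precisely $T_{X^*}$ acting on $X^*\times X^*$. This is exactly the symmetry of the block matrix $\bigl(\begin{smallmatrix} I & I \\ I & -I\end{smallmatrix}\bigr)$ manifesting itself on the dual side.

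Combining these ingredients yields
\[
\nu_X(p)^{1/p} = \norm{T_X}_{\ell_2^p(X)\to \ell_2^p(X)} = \norm{T_X^*}_{\ell_2^{p'}(X^*)\to \ell_2^{p'}(X^*)} = \norm{T_{X^*}}_{\ell_2^{p'}(X^*)\to \ell_2^{p'}(X^*)} = \nu_{X^*}(p')^{1/p'},
\]
which is the required equality. I do not anticipate a genuine obstacle here: every step is either a standard Banach space duality or a short verification. The only point that requires a moment of care is ensuring that the pairing used to identify $(\ell_2^p(X))^*$ with $\ell_2^{p'}(X^*)$ is the one under which the adjoint computation produces $T_{X^*}$ rather than some rescaled variant, but the symmetric choice above makes this transparent.
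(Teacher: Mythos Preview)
Your proof is correct and follows exactly the same approach as the paper: both use the operator-theoretic reformulation $\nu_X(p)^{1/p} = \norm{T_X}_{\ell_2^p(X)\to\ell_2^p(X)}$, the duality $\bigl(\ell_2^p(X)\bigr)^* = \ell_2^{p'}(X^*)$, the identification $T_X^* = T_{X^*}$, and the equality $\norm{T} = \norm{T^*}$. You have simply made explicit the pairing and the adjoint computation that the paper leaves to the reader.
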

\begin{proof}
This follows from the fact that both $T_{X}$ and its adjoint $T^{\ast}_{X}=T_{X^{\ast}}$ have the same norm. Using this one simply calculates that
  \[ \nu_X(p)^{1/p} = \norm{T_X}_{\ell^{p}_{2}(X)\rightarrow\ell^{p}_{2}(X)} = \norm{T_{X^*}}_{\ell^{p'}_{2}(X^{\ast})\rightarrow\ell^{p'}_{2}(X^{\ast})} = \nu_{X^*}(p')^{1/p'}. \]
\end{proof}

Note in particular that by applying Lemma \ref{L2-1} twice, one finds that it is always the case that $\nu_{X^{\ast\ast}}=\nu_{X}$.

Another benefit of the operator theoretic reformulation of the modulus of roundness is that it allows one to conclude certain non-trivial log-convexity properties about it.

\begin{prop}\label{Nu Convexity}
	Let $X$ be a Banach space. Also, suppose that $1\leq p_{0},p_{1},p<\infty$ and $0<\theta<1$ with
		\[ \frac{1}{p}=\frac{1-\theta}{p_{0}}+\frac{\theta}{p_{1}}.
		\]
	Then
		\[ \nu_{X}(p)^{1/p}\leq\big(\nu_{X}(p_{0})^{1/p_{0}}\big)^{1-\theta}\big(\nu_{X}(p_{1})^{1/p_{1}}\big)^{\theta}.
		\]
\end{prop}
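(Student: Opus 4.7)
The plan is to recognise the stated inequality as an instance of the Riesz--Thorin theorem (complex interpolation) applied to the single linear map $T_X$ as it acts on the family of Bochner spaces $\ell_2^p(X)$ with $p$ varying.

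First, I would reduce the claim to an operator-norm inequality. By the identification $\nu_X(p)^{1/p} = \|T_X\|_{\ell_2^p(X)\to\ell_2^p(X)}$ noted just before Lemma \ref{L2-1}, the desired estimate is equivalent to
$$\|T_X\|_{\ell_2^p(X)\to\ell_2^p(X)} \leq \|T_X\|_{\ell_2^{p_0}(X)\to\ell_2^{p_0}(X)}^{1-\theta}\,\|T_X\|_{\ell_2^{p_1}(X)\to\ell_2^{p_1}(X)}^{\theta},$$
that is, to log-convexity of $\tfrac{1}{p}\mapsto \log\|T_X\|_{\ell_2^p(X)\to\ell_2^p(X)}$, which is exactly what the complex interpolation theorem delivers.

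Next, I would identify the complex interpolation space of the couple $(\ell_2^{p_0}(X),\ell_2^{p_1}(X))$. Viewing $\ell_2^p(X)$ as the Bochner space $L^p(\mu;X)$ where $\mu$ is counting measure on a two-point set, the standard identification for complex interpolation of Bochner $L^p$ spaces (see for example Theorem 5.1.2 of Bergh and L\"ofstr\"om) gives the isometric equality
$$[\ell_2^{p_0}(X),\ell_2^{p_1}(X)]_\theta = \ell_2^p(X), \qquad \frac{1}{p}=\frac{1-\theta}{p_0}+\frac{\theta}{p_1}.$$
Since $T_X$ is a single linear map on the common underlying vector space $X\times X$ and is bounded on each endpoint space, the Calder\'on interpolation theorem immediately yields the required norm bound, and translating back through the operator-norm identification completes the proof.

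The main subtlety is only that one must invoke Riesz--Thorin in its vector-valued Bochner form rather than the more familiar scalar form; this vector-valued version is nevertheless entirely standard. Alternatively, the inequality can be proved in a self-contained manner by running the three-lines lemma directly on a two-parameter analytic family built from a near-maximising pair $(x,y)\in X\times X$ and a norming functional in $\ell_2^{p'}(X^*)$, essentially repeating the classical proof of Riesz--Thorin in this concrete two-term setting.
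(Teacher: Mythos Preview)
Your proposal is correct and follows essentially the same approach as the paper: both proofs reduce the inequality to the operator-norm identity $\nu_X(p)^{1/p}=\|T_X\|_{\ell_2^p(X)\to\ell_2^p(X)}$ and then invoke interpolation of the scale $\ell_2^p(X)$ (the paper cites Bergh--L\"ofstr\"om, Theorem~5.2.3, while you appeal to the complex method via Theorem~5.1.2, but the substance is identical). Your write-up is in fact somewhat more explicit than the paper's, spelling out the identification of $\ell_2^p(X)$ as a two-point Bochner space and naming the Riesz--Thorin/Calder\'on theorem directly.
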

\begin{proof}
	We just make use of the fact that for $p\geq 1$ one has that
		\[ \nu_{X}(p)^{1/p}=\|T_{X}\|_{\ell^{p}_{2}(X)\rightarrow \ell^{p}_{2}(X)}.
		\]
	Since the spaces $\ell^{p}_{2}(X)$ for $1\leq p<\infty$ form an interpolation scale, the desired inequality now follows from standard results in interpolation theory (see for example \cite{BL}, Theorem 5.2.3).
\end{proof}

In particular, this gives the following.

\begin{cor}\label{T42}
Let $X$ be a Banach space. Then the  function $\nu_X$ is continuous on $[1, \infty)$, and differentiable almost everywhere in that range. Furthermore, $\nu_X$ has a constant value of 2 on $[1, \mr(X)]$ and is strictly increasing on $[\mr(X), \infty)$.
\end{cor}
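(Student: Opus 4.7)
The plan is to derive all four assertions from the log-convexity statement of Proposition \ref{Nu Convexity}. Setting $s = 1/p$ and $g(s) = s\log \nu_X(1/s)$, the proposition is equivalent to saying that $g$ is convex on $(0,1]$. Two elementary facts about $g$ are available from the start: $g(1) = \log \nu_X(1) = \log 2$, and the upper bound $\nu_X(p) \le 2^p$ gives $g(s) \le \log 2$ throughout $(0,1]$.

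Continuity and almost-everywhere differentiability of $\nu_X$ on $(1,\infty)$ are automatic from convexity of $g$ on the interior of its domain. For continuity at the endpoint $p=1$, I would sandwich the one-sided limit: for any fixed interior point $s_0$ and $s \in (s_0, 1)$, the chord inequality
\[
g(s) \le \frac{1-s}{1-s_0}\, g(s_0) + \frac{s-s_0}{1-s_0}\, g(1)
\]
gives $\limsup_{s \to 1^-} g(s) \le g(1)$, while the pointwise bound $\nu_X(p) \ge 2$ yields $g(s) \ge s \log 2$, hence $\liminf_{s \to 1^-} g(s) \ge \log 2 = g(1)$. Combined with continuity on the interior, this yields continuity of $\nu_X$ on all of $[1,\infty)$ and differentiability a.e.

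For the constant value on $[1, \mr(X)]$, I would appeal to the listed equivalence $p \in r(X,\norm{\cdot}) \iff \nu_X(p) = 2$ together with Enflo's property (ii) from the introduction (downward closure of the roundness set). This gives $\nu_X(p) = 2$ for all $p < \mr(X)$, and continuity extends the equality to $p = \mr(X)$ itself.

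The last and most delicate assertion is strict monotonicity on $[\mr(X),\infty)$. The case $p_0 = \mr(X)$ is immediate from the definition of $\mr(X)$, since any $p_1 > \mr(X)$ satisfies $\nu_X(p_1) > 2 = \nu_X(\mr(X))$. For $\mr(X) < p_0 < p_1$, I would argue by contradiction, assuming $\nu_X(p_0) \ge \nu_X(p_1)$ and applying the chord inequality for $g$ at $s_1 = 1/p_1 < s_0 = 1/p_0 < 1$, using the endpoint value $g(1) = \log 2$ as a free anchor. Writing $a = \log \nu_X(p_0)$ and $b = \log \nu_X(p_1)$, this reduces after routine rearrangement to
\[
(p_1 - p_0)\log 2 \;\ge\; a(p_1 - 1) - b(p_0 - 1).
\]
Because $a \ge b$ and $p_0 \ge 1$, the right-hand side dominates $a(p_1 - p_0)$, forcing $\log 2 \ge a$ and hence $\nu_X(p_0) \le 2$, contradicting $p_0 > \mr(X)$. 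The only real obstacle is recognizing that the boundary value $g(1) = \log 2$ serves as a free third point that turns raw convexity of $g$ into a detector of where $\nu_X$ stops being constant; once one anchors the chord inequality there, the calculation is routine.
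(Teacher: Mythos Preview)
Your proof is correct and follows essentially the same approach as the paper's: both extract everything from the log-convexity in Proposition~\ref{Nu Convexity}, anchoring the interpolation inequality at $p=1$ (where $\nu_X(1)=2$) to obtain strict monotonicity beyond $\mr(X)$; the paper applies the proposition directly with $\tilde p_0=1$ to get $\nu_X(q)\le 2^{(p-q)/(p-1)}\nu_X(p)^{(q-1)/(p-1)}<\nu_X(p)$, which is exactly your chord inequality for $g$ unwound, and your contradiction argument is just the contrapositive of this direct estimate. Your treatment is in fact more careful than the paper's on two points it glosses over---continuity of $\nu_X$ at the endpoint $p=1$ and the explicit verification of constancy on $[1,\mr(X)]$---so nothing is missing.
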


\begin{proof}
	The inequality in Proposition \ref{Nu Convexity} is equivalent to the statement that the function $p\mapsto \nu_{X}(1/p)^{p}$ is log-convex, and hence this function is also convex. Since convex functions are continuous and differentiable almost everywhere, it follows from this that $\nu_{X}$ is also continuous and differentible almost everywhere on $[1,\infty)$. For the next part, suppose that $1<q<p<\infty$. Then we may apply Proposition \ref{Nu Convexity} with $\tilde{p}_{0}=1,\tilde{p}_{1}=p,\tilde{p}=q$ and $\tilde{\theta}=p(q-1)/q(p-1)$. Since $\nu_{X}(1)=2$ this gives that
		\[ \nu_{X}(q)\leq 2^{\frac{p-q}{p-1}}\nu_{X}(p)^{\frac{q-1}{p-1}}.
		\]
	So, if we now assume further that $\mr(X)\leq q<p<\infty$ then $\nu_{X}(p)>2$ and so
		\[ \nu_{X}(q)\leq 2^{\frac{p-q}{p-1}}\nu_{X}(p)^{\frac{q-1}{p-1}}< \nu_{X}(p)^{\frac{p-q}{p-1}}\times\nu_{X}(p)^{\frac{q-1}{p-1}}=\nu_{X}(p).
		\]
	That is, $\nu_{X}$ is strictly increasing on $[\mr(X), \infty)$.
\end{proof}

Formulas for $\nu_{X}$ for the $\ell^{p}$ spaces are essentially due to Kato \cite{K1}.

\begin{exam}\label{Hilbert Space}
	Let $H$ be a Hilbert space (of dimension at least 2). Then Theorem 1 of \cite{K1} shows that
	$ \nu_H(p) = \max\{2,2^{p-1}\}$.
\end{exam}

\begin{exam}\label{lp Mu}
Let $X=\ell^{p}$, and first suppose that $1\leq p\leq 2$. Then
	\[ \nu_{X}(q) = \begin{cases}
	2, & 1 \leq q \leq p, \\
	2^{q/p}, &  p\leq q\leq p', \\
	2^{q-1}, & p'\leq q<\infty.
	\end{cases}
	\]
Now suppose that $X=\ell^{p}$ where $2\leq p\leq \infty$. Then
	\[ \nu_{X}(q) = \begin{cases}
	2, & 1 \leq q \leq p', \\
	2^{q/p'}, &  p'\leq q\leq p, \\
	2^{q-1}, & p\leq q<\infty.
	\end{cases}
	\]
\end{exam}

\section{Clarkson roundness and the roundness of standard spaces}\label{Roundness Of Standard Spaces}

In order to calculate the maximal roundness of the Lebesgue--Bochner spaces $\ell^{p}(\ell^{q})$ and the Schatten spaces $S_p$ it is convenient to define a second, stronger version of roundness, whose definition is motivated by Clarkson's inequalities for $L^p$ spaces (see for example \cite{Ca}, page 119).

\begin{defn}
 For $1 < p \le 2$ we shall say that a Banach space $X$ has \textit{Clarkson roundness $p$} if
   \[ \norm{x+y}^{p'} + \norm{x-y}^{p'} \le 2 \bigl(\norm{x}^p + \norm{y}^p \bigr)^{p'-1}
   \]
for all $x,y \in X$.
\end{defn}

\begin{rem}
	The above inequality can be rearranged to read as
		\[ \bigl(\norm{x+y}^{p'} + \norm{x-y}^{p'}\bigr)^{1/p'} \le 2^{1/p'} \bigl(\norm{x}^p + \norm{y}^p \bigr)^{1/p}
		\]
	for all $x,y\in X$. Using the operator theoretic notation from Section \ref{The Modulus Of Roundness} it is then clear that a Banach space $X$ has Clarkson roundness $p$ if and only if
		\[ \|T_{X}\|_{\ell^{p}_{2}(X)\rightarrow\ell^{p'}_{2}(X)}\leq 2^{1/p'}.
		\]
	Such inequalities have been studied previously. See for example \cite{HKT}, where they were referred to as $(p,p')$-Clarkson inequalities.
\end{rem}

There is a simple relationship between roundness and Clarkson roundness.

\begin{lem}\label{CRR} Suppose that $1 < p \le 2$. Then if $X$ has Clarkson roundness $p$ it also has roundness $p$.
\end{lem}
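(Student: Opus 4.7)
The plan is to chain two inequalities: the Clarkson roundness hypothesis, which controls $\|x+y\|^{p'} + \|x-y\|^{p'}$ from above, and a standard comparison between the $\ell^p_2$ and $\ell^{p'}_2$ norms on $\mathbb{R}^2$, which will let me pass from $p'$-powers to $p$-powers. Since $1 < p \leq 2 \leq p'$, the inclusion $\ell^p_2 \hookrightarrow \ell^{p'}_2$ is a contraction, and equivalently, for any $a,b \geq 0$,
\[
(a^p + b^p)^{1/p} \leq 2^{1/p - 1/p'}(a^{p'} + b^{p'})^{1/p'}.
\]

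The first step is to apply this with $a = \|x+y\|$ and $b = \|x-y\|$, which gives
\[
\|x+y\|^p + \|x-y\|^p \leq 2^{1 - p/p'}\bigl(\|x+y\|^{p'} + \|x-y\|^{p'}\bigr)^{p/p'}.
\]
Next I would insert the Clarkson roundness bound $\|x+y\|^{p'} + \|x-y\|^{p'} \leq 2\bigl(\|x\|^p + \|y\|^p\bigr)^{p'-1}$ into the right-hand side, obtaining
\[
\|x+y\|^p + \|x-y\|^p \leq 2^{1-p/p'} \cdot 2^{p/p'} \bigl(\|x\|^p + \|y\|^p\bigr)^{p(p'-1)/p'}.
\]

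The final step is simply arithmetic with the conjugate exponents. Using $1/p + 1/p' = 1$, one has $p'/p = p' - 1$, hence $(p'-1)/p' = 1/p$, so $p(p'-1)/p' = 1$. The constant collapses to $2^{1-p/p'+p/p'} = 2$, yielding exactly
\[
\|x+y\|^p + \|x-y\|^p \leq 2\bigl(\|x\|^p + \|y\|^p\bigr),
\]
which is roundness $p$. There is no real obstacle here: the entire content is the comparison of the two $\ell^r_2$ norms (a one-line consequence of the power-mean inequality or Hölder), with the exponent bookkeeping engineered so that the Clarkson power $p'-1$ in the base is exactly compensated by the outer $p/p'$ when reduced via conjugacy.
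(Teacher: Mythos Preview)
Your proof is correct and follows essentially the same route as the paper's: both use the power-mean comparison $(a^p+b^p)^{1/p}\le 2^{1/p-1/p'}(a^{p'}+b^{p'})^{1/p'}$ with $a=\|x+y\|$, $b=\|x-y\|$, then insert the Clarkson roundness bound and simplify via the conjugate-exponent identity $p(p'-1)/p'=1$ (equivalently $(p'-1)(p-1)=1$, which is the form the paper records).
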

\begin{proof}
We make use of the following elementary inequality. If $1\leq r<s<\infty$ and $a,b\geq 0$ then
	\[ (a^{s}+b^{s})^{1/s}\leq(a^{r}+b^{r})^{1/r}\leq 2^{1/r-1/s}(a^{s}+b^{s})^{1/s}.
	\]
Since $p\leq p' $, for any $x,y\in X$ this gives that
\begin{align*}
\norm{x+y}^p+\norm{x-y}^p
&\leq 2^{1-p/p' }(\norm{x+y}^{p' }+\norm{x-y}^{p' })^{p/p' }\\
&=2^{1-p/p' }(\norm{x+y}^{p' }+\norm{x-y}^{p' })^{p-1}\\
&\leq 2^{1-p/p' }\times 2^{p/p' }(\norm{x}^p+\norm{y}^p)^{(p' -1)(p-1)}\\
&=2(\norm{x}^p+\norm{y}^p)
\end{align*}
where this last equality follows from the fact that $(p'-1)(p-1)=1$.
\end{proof}

\begin{thm}\label{LB}
Let $(\Omega,\mathcal{A},\mu)$ be a measure space and let $X$ be a Banach space. Then,
 \begin{itemize}
	\item[(i)]  If $1<p\leq 2$ and $X$ has Clarkson roundness $q\leq p$, the Lebesuge--Bochner space $L^{p}(\Omega,\mathcal{A},\mu;X)$ has Clarkson roundness $q$.
	\item [(ii)] If $2\leq p<\infty$ and $X$ has Clarkson roundness $q\leq p' $, the Lebesgue--Bochner space $L^{p}(\Omega,\mathcal{A},\mu;X)$ has Clarkson roundness $q$.
\end{itemize}
\end{thm}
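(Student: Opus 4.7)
The plan is to unify hypotheses (i) and (ii) under the single structural condition $1 < q \leq p \leq q'$ and then prove the desired inequality by combining Clarkson roundness of $X$ pointwise with the standard comparison between the mixed norms $\ell^{q}_{2}(L^{p})$ and $L^{p}(\ell^{q}_{2})$. In case (i), $q \leq p \leq 2$ forces $q' \geq 2 \geq p$; in case (ii), $q \leq p'$ combined with $p \geq 2$ gives $q \leq p' \leq 2 \leq p$, and taking conjugates yields $q' \geq p$. So in both cases we have $q \leq p \leq q'$, and the two hypotheses can be treated uniformly.

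For the main computation, fix $f, g \in L^{p}(\Omega,\mathcal{A},\mu;X)$ and apply Clarkson roundness $q$ of $X$ pointwise to $f(\omega), g(\omega) \in X$. After taking $q'$-th roots this gives, for $\mu$-a.e.\ $\omega$,
\[
\bigl(\norm{(f+g)(\omega)}^{q'} + \norm{(f-g)(\omega)}^{q'}\bigr)^{1/q'} \leq 2^{1/q'}\bigl(\norm{f(\omega)}^{q} + \norm{g(\omega)}^{q}\bigr)^{1/q}.
\]
Taking $L^{p}(\mu)$-norms of both sides in $\omega$ yields
\[
\norm{(f+g,f-g)}_{L^{p}(\ell^{q'}_{2}(X))} \leq 2^{1/q'}\norm{(f,g)}_{L^{p}(\ell^{q}_{2}(X))}.
\]

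The final step is to interchange the order of the mixed norms on each side. A standard application of Minkowski's integral inequality (with its reverse form for exponents below one) shows that for nonnegative scalar functions $u_{1}, u_{2}$ and any $s \geq 1$, one has $\norm{(u_{1},u_{2})}_{L^{p}(\ell^{s}_{2})} \leq \norm{(u_{1},u_{2})}_{\ell^{s}_{2}(L^{p})}$ when $s \leq p$, and the reverse inequality when $s \geq p$. Applying this with $s = q' \geq p$ on the left and $s = q \leq p$ on the right, and feeding these into the bound above, we obtain the chain
\[
\norm{(f+g,f-g)}_{\ell^{q'}_{2}(L^{p}(X))} \leq \norm{(f+g,f-g)}_{L^{p}(\ell^{q'}_{2}(X))} \leq 2^{1/q'}\norm{(f,g)}_{L^{p}(\ell^{q}_{2}(X))} \leq 2^{1/q'}\norm{(f,g)}_{\ell^{q}_{2}(L^{p}(X))},
\]
which, after raising to the $q'$-th power, is precisely the Clarkson roundness $q$ inequality for $L^{p}(\Omega,\mathcal{A},\mu;X)$.

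The main obstacle is simply tracking the direction of the two mixed-norm comparisons: the whole sandwich hinges on the outer exponent $p$ sitting between $q$ and $q'$, and the hypotheses in (i) and (ii) are engineered exactly to guarantee this. Once this observation is made, the remainder of the argument is routine.
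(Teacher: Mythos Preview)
Your proof is correct and is essentially the same argument as the paper's, just packaged in mixed-norm notation. The paper writes $\norm{f}_{p}^{s}=\bigl\|\,\norm{f}^{s}\,\bigr\|_{p/s}$ and then applies Minkowski (when $p/q\ge 1$) and reverse Minkowski (when $p/q'\le 1$) directly, whereas you phrase the same two steps as the comparisons $\ell^{q'}_{2}(L^{p})\hookrightarrow L^{p}(\ell^{q'}_{2})$ and $L^{p}(\ell^{q}_{2})\hookrightarrow \ell^{q}_{2}(L^{p})$; unwinding your mixed-norm inequalities gives exactly the paper's chain, and your explicit unification $q\le p\le q'$ is the paper's observation that $p/q'\le 1$ and $p/q\ge 1$ in both cases.
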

\begin{proof}
We begin by recognising some helpful identities. Namely that
	\[ \norm{f}_{p}^q=\bigg(\int_{\Omega}\norm{f(t)}^pd\mu(t)\bigg)^{q/p}=\big\|\norm{f}^q\big\|_{p/q}
	\]
where here $\norm{f}$ denotes the pointwise norm of a function $f:\Omega\rightarrow X$ and $\|.\|_{p}$ denotes the norm in $L^{p}(\Omega,\mathcal{A},\mu;X)$. Similarly, by swapping $q$ to $q' $ we also have
	\[\norm{f}_{p}^{q' }=\big\|\norm{f}^{q' }\big\|_{p/q' }.
	\]
Note that for $p$ and $q$ in either case described in the statement of the theorem we have that $p/q' \leq 1$ and $p/q\geq 1$. Hence, for all $f,g\in L^{p}(\Omega,\mathcal{A},\mu;X)$,
\begin{align*}
\norm{f+g}_{p}^{q' }+\norm{f-g}_{p}^{q' }
&=\big\|\norm{f+g}^{q' }\big\|_{p/q' }+\big\|\norm{f-g}^{q' }\big\|_{p/q' }\\
&\leq \big\|\norm{f+g}^{q' }+\norm{f-g}^{q' }\big\|_{p/q' }\\
&\leq 2\big\|(\norm{f}^q+\norm{g}^q)^{q' -1}\big\|_{p/q' }\\
&=2\bigg(\big\|\norm{f}^q+\norm{g}^q\big\|_{p/q}\bigg)^{q' -1}\\
&\leq 2\bigg(\big\|\norm{f}^q\big\|_{p/q}+\big\|\norm{g}^q\big\|_{p/q}\bigg)^{q' -1}\\
&=2(\norm{f}_{p}^q+\norm{g}_{p}^q)^{q' -1}
\end{align*}
where the first inequality follows from the reverse Minkowski inequality since $p/q' \leq 1$, the second inequality comes from the fact that $X$ has Clarkson roundness $q$ and the last inequality follows from the usual Minkowski inequality since $p/q\geq 1$.
\end{proof}

To make use of the above theorem, we first note the Clarkson roundness of $\mathbb{R}$. A proof of the following proposition can be found on page 118 of \cite{Ca}.

\begin{prop}\label{CR}
The Banach space $\mathbb{R}$ has Clarkson roundness $p$ for all $1<p\leq 2$.
\end{prop}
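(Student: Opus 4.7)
The plan is to exploit the operator-theoretic reformulation given in the Remark following the definition of Clarkson roundness: the inequality to be proved is precisely
$$\|T_\mathbb{R}\|_{\ell^p_2(\mathbb{R}) \to \ell^{p'}_2(\mathbb{R})} \le 2^{1/p'},$$
where $T_\mathbb{R}(x,y) = (x+y, x-y)$ is the operator from Section~\ref{The Modulus Of Roundness} specialised to $X=\mathbb{R}$. Since the spaces $\ell^r_2(\mathbb{R})$ form a Riesz--Thorin interpolation scale, the natural strategy is to bound $\|T_\mathbb{R}\|$ at two convenient endpoints and then interpolate.

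First I would evaluate $\|T_\mathbb{R}\|$ at the endpoints $(p,p')=(1,\infty)$ and $(p,p')=(2,2)$. Viewed as a map $\ell^1_2(\mathbb{R}) \to \ell^\infty_2(\mathbb{R})$, the bound $\max(|x+y|,|x-y|) \le |x|+|y|$ is immediate (and tight, taking $y=0$), so $\|T_\mathbb{R}\|_{\ell^1_2 \to \ell^\infty_2} = 1$. Viewed as a map $\ell^2_2(\mathbb{R}) \to \ell^2_2(\mathbb{R})$, the parallelogram identity $(x+y)^2+(x-y)^2 = 2(x^2+y^2)$ gives $\|T_\mathbb{R}\|_{\ell^2_2 \to \ell^2_2} = \sqrt{2}$.

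Next I would apply the Riesz--Thorin interpolation theorem with parameter $\theta = 2/p' \in (0,1]$. A direct check using $\tfrac{1}{p}+\tfrac{1}{p'}=1$ confirms that $\tfrac{1}{p} = (1-\theta)\cdot 1 + \tfrac{\theta}{2}$ and $\tfrac{1}{p'} = \tfrac{1-\theta}{\infty} + \tfrac{\theta}{2}$, so interpolating between the two endpoint estimates yields
$$\|T_\mathbb{R}\|_{\ell^p_2(\mathbb{R}) \to \ell^{p'}_2(\mathbb{R})} \;\le\; 1^{1-\theta} \cdot (\sqrt{2})^{\theta} \;=\; 2^{\theta/2} \;=\; 2^{1/p'},$$
as required. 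Raising the corresponding norm inequality in $\mathbb{R}^{2}$ to the $p'$-th power recovers the Clarkson roundness inequality in its original form.

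The main obstacle, if one wishes to avoid invoking Riesz--Thorin, is to find a self-contained scalar argument. In that case I would normalise by homogeneity to $|x|^{p}+|y|^{p}=1$, reduce by sign-symmetry to $x \ge y \ge 0$, substitute $y=tx$ with $t\in[0,1]$, and then show by calculus that
$$F(t) \;:=\; 2(1+t^{p})^{p'-1} - (1+t)^{p'} - (1-t)^{p'}$$
is nonnegative on $[0,1]$. Since $F(0)=F(1)=0$ and also $F'(0)=0$, the genuine difficulty lies in ruling out any intermediate descent below zero; I would attempt this by examining the second derivative and exploiting the concavity of $u \mapsto u^{p/2}$ for $1<p\le 2$ together with the convexity of $u \mapsto u^{p'/2}$ for $p'\ge 2$.
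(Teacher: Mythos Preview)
Your interpolation proof is correct. The endpoint norms $\|T_{\mathbb{R}}\|_{\ell^{1}_{2}\to\ell^{\infty}_{2}}=1$ and $\|T_{\mathbb{R}}\|_{\ell^{2}_{2}\to\ell^{2}_{2}}=\sqrt{2}$ are as you say, and with $\theta=2/p'$ the Riesz--Thorin theorem gives exactly $2^{1/p'}$. One small point worth making explicit: Riesz--Thorin is stated for complex scalars, so you should note that the same endpoint bounds hold for $T$ acting on $\ell^{r}_{2}(\mathbb{C})$ (the triangle inequality and the parallelogram law are valid there too), and the real operator norm is bounded by the complex one. This is harmless here.

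As for comparison with the paper: the paper does not supply its own proof of this proposition at all, but simply refers the reader to page~118 of Carothers~\cite{Ca}. So there is nothing to compare directly. For what it is worth, the argument in Carothers is an elementary scalar computation along the lines of your fallback plan (normalise, substitute, and analyse a one-variable function), rather than an interpolation argument. Your interpolation route is cleaner and more conceptual; it also explains \emph{why} the constant $2^{1/p'}$ is the right one, as the log-linear interpolant between $1$ and $\sqrt{2}$. The direct calculus approach, on the other hand, is entirely self-contained and avoids any appeal to complex interpolation machinery. Either is perfectly acceptable for this result.
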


\begin{cor}\label{CL}
Let $1\leq p<\infty$. Then $\ell^p$ has maximal roundness $\min(p,p' )$.
\end{cor}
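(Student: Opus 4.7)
The plan is to establish both inequalities $\mr(\ell^p) \geq \min(p,p')$ and $\mr(\ell^p) \leq \min(p,p')$ separately, using the machinery already developed for the lower bound and explicit test vectors for the upper bound.

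For the lower bound, I would first note that $\ell^p = L^p(\mathbb{N},\mathcal{P}(\mathbb{N}),\mu;\mathbb{R})$ where $\mu$ is counting measure, so Theorem \ref{LB} applies with $X = \mathbb{R}$. If $1 < p \le 2$, then Proposition \ref{CR} gives that $\mathbb{R}$ has Clarkson roundness $p$, and applying Theorem \ref{LB}(i) with $q = p$ yields that $\ell^p$ has Clarkson roundness $p$. If $2 \le p < \infty$, then $1 < p' \le 2$, so $\mathbb{R}$ has Clarkson roundness $p'$, and Theorem \ref{LB}(ii) with $q = p'$ yields that $\ell^p$ has Clarkson roundness $p'$. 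In either case, $\ell^p$ has Clarkson roundness $\min(p,p')$, and by Lemma \ref{CRR} it has roundness $\min(p,p')$, so $\mr(\ell^p) \ge \min(p,p')$. The case $p = 1$ is trivial since $\mr(X) \ge 1$ for every Banach space.

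For the upper bound, I would test the defining inequality of roundness on two carefully chosen pairs of vectors. Taking $x = e_1$ and $y = e_2$, we have $\|x\| = \|y\| = 1$ and $\|x \pm y\| = 2^{1/p}$, so if $q \in r(\ell^p, \|\cdot\|)$ then
\[
2 \cdot 2^{q/p} = \|x+y\|^q + \|x-y\|^q \le 2(\|x\|^q + \|y\|^q) = 4,
\]
which forces $q \le p$. Next, taking $x = e_1 + e_2$ and $y = e_1 - e_2$, we have $\|x\|^p = \|y\|^p = 2$ while $\|x+y\| = \|x-y\| = 2$, so if $q \in r(\ell^p,\|\cdot\|)$ then
\[
2 \cdot 2^q \le 2\bigl(2^{q/p} + 2^{q/p}\bigr) = 4 \cdot 2^{q/p},
\]
which rearranges to $q(1 - 1/p) \le 1$, i.e., $q \le p'$. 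Together these give $\mr(\ell^p) \le \min(p,p')$, completing the proof.

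I don't expect any serious obstacle: the lower bound is a direct plug-in to Theorem \ref{LB} combined with Lemma \ref{CRR}, while the upper bound is a short computation with two standard test configurations. The only mildly delicate point is remembering to exhibit \emph{two} different pairs of vectors for the upper bound—one pair alone gives only $q \le p$ or only $q \le p'$, but not both.
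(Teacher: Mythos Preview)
Your proposal is correct and follows essentially the same approach as the paper: the lower bound via Theorem~\ref{LB} applied to $X=\mathbb{R}$ together with Proposition~\ref{CR} and Lemma~\ref{CRR}, and the upper bound via exactly the two test pairs $x=e_1,\,y=e_2$ and $x=e_1+e_2,\,y=e_1-e_2$. Your write-up simply spells out the two cases of Theorem~\ref{LB} and the arithmetic of the test vectors more explicitly than the paper does.
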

\begin{proof}
Suppose that $1<p<\infty$. By Proposition \ref{CR}, an application of Theorem \ref{LB} with $X=\mathbb{R}$ shows that $\ell^p$ has Clarkson roundness $\min(p,p' )$. Hence by Lemma \ref{CRR} $\ell^p$ also has roundness $\min(p,p' )$. At this point we note that this is also trivially true for $p=1$. To see that this is in fact the maximal roundness, one need only look to the bounds given by taking first $x=(1,0,0,0,\dots,),y=(0,1,0,0,\dots,)\in\ell^p$ and secondly $x=(1,1,0,0,\dots,),y=(1,-1,0,0,\dots,)\in\ell^p$.
\end{proof}

A second application of Theorem \ref{LB} with $X=\ell^q$ gives the following.

\begin{cor}\label{Double Lp Roundness}
Let $1\leq p,q<\infty$. Then $\ell^p(\ell^q)$ has maximal roundness $\min(p,p' ,q,q' )$.
\end{cor}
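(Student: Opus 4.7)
The plan is to follow the same strategy as the proof of Corollary \ref{CL}, but applying Theorem \ref{LB} twice: first with $X = \mathbb{R}$ to obtain a Clarkson roundness statement for $\ell^q$ at the exact value we need, and then with $X = \ell^q$ to lift it to $\ell^p(\ell^q)$. The matching upper bound will come from a simple isometric embedding argument.

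First I would dispense with the boundary cases: if $p = 1$ or $q = 1$ then $\min(p, p', q, q') = 1$, and the lower bound is trivial since every Banach space has roundness $1$. For the interior case $1 < p, q < \infty$, set $r = \min(p, p', q, q')$ and note that $1 < r \le 2$. By Proposition \ref{CR}, $\mathbb{R}$ has Clarkson roundness $r$. Now apply Theorem \ref{LB} with $X = \mathbb{R}$: if $1 < q \le 2$ then $r \le \min(q,q') \le q$, so part (i) gives that $\ell^q$ has Clarkson roundness $r$, while if $2 \le q < \infty$ then $r \le \min(q,q') = q'$, so part (ii) yields the same conclusion.

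Next I would apply Theorem \ref{LB} again, this time with $X = \ell^q$ endowed with the Clarkson roundness $r$ just established. Since $r \le p$ when $1 < p \le 2$, and $r \le p'$ when $2 \le p < \infty$, the appropriate case of Theorem \ref{LB} applies in each instance, and we conclude that $\ell^p(\ell^q)$ has Clarkson roundness $r$. Lemma \ref{CRR} then upgrades this to ordinary roundness $r$, giving the lower bound $\mr(\ell^p(\ell^q)) \ge r$.

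For the matching upper bound, I would observe that both $\ell^p$ and $\ell^q$ embed isometrically into $\ell^p(\ell^q)$: the maps $(a_n) \mapsto (a_n e_1)_n$ and $b \mapsto (b, 0, 0, \ldots)$ are both norm-preserving. The monotonicity of maximal roundness under isometric embeddings, recorded in Section \ref{Introduction}, combined with Corollary \ref{CL}, gives
\[
\mr(\ell^p(\ell^q)) \le \min\bigl(\mr(\ell^p), \mr(\ell^q)\bigr) = \min(p, p', q, q').
\]
The only step requiring real care is confirming that the hypothesis of Theorem \ref{LB} holds for $\ell^q$ at the precise value $r = \min(p, p', q, q')$ rather than merely at the larger endpoint $\min(q, q')$; this is precisely why the preparatory application of Theorem \ref{LB} with $X = \mathbb{R}$ is needed, even though it overlaps with the proof of Corollary \ref{CL}.
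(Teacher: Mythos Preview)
Your proof is correct and follows essentially the same strategy as the paper's: apply Theorem~\ref{LB} to obtain Clarkson roundness for $\ell^p(\ell^q)$, convert via Lemma~\ref{CRR}, and use the isometric copies of $\ell^p$ and $\ell^q$ for the upper bound. You are in fact slightly more careful than the paper, which simply asserts that ``since $\ell^q$ has Clarkson roundness $\min(q,q')$, an application of Theorem~\ref{LB} shows that $\ell^p(\ell^q)$ has Clarkson roundness $\min(p,p',q,q')$''; your explicit first pass with $X=\mathbb{R}$ to secure Clarkson roundness $r$ for $\ell^q$ avoids any tacit appeal to downward closure of Clarkson roundness.
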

\begin{proof}
Firstly, if either $p$ or $q$ is $1$, then $\ell^p(\ell^q)$ contains an isometric copy of $\ell^1$, so that the maximal roundness of $\ell^p(\ell^q)$ must be $1$. Otherwise $1<p,q<\infty$. Then, as in the proof of Corollary \ref{CL}, since $\ell^q$ has Clarkson roundness $\min(q,q' )$, an application of Theorem \ref{LB} shows that $\ell^{p}(\ell^q)$ has Clarkson roundness $\min(p,p' ,q,q' )$. Again, Lemma \ref{CRR} shows that it also has roundness $\min(p,p' ,q,q' )$. To see that this is the maximal roundness, note that $\ell^p(\ell^q)$ contains isometric copies of both $\ell^p$ and $\ell^q$.
\end{proof}

It is known that the Schatten $p$-trace class $S_{p}$ also satisfies Clarkson's Inequalities (see \cite{M}). More precisely, in our terminology we have the following theorem.

\begin{thm}
If $1<p<\infty$, then $S_{p}$ has Clarkson roundness $\min(p,p' )$.
\end{thm}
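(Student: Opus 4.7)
The plan is to split into the two cases $1<p\le 2$ and $2\le p<\infty$; in each case the target value of Clarkson roundness is $\min(p,p')\in(1,2]$, so the definition applies.

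For $1<p\le 2$ the target is $\min(p,p')=p$, and the required inequality
\[
\|A+B\|_{S_p}^{p'}+\|A-B\|_{S_p}^{p'}\le 2\bigl(\|A\|_{S_p}^p+\|B\|_{S_p}^p\bigr)^{p'-1},\qquad A,B\in S_p,
\]
is precisely the noncommutative Clarkson-type inequality for the Schatten class $S_p$ established by McCarthy in \cite{M}. So this case is immediate.

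For $2\le p<\infty$ the target is $\min(p,p')=p'$ with $1<p'\le 2$, so by the previous case $S_{p'}$ already has Clarkson roundness $p'$. I would then deduce the same for $S_p$ by duality, mirroring the argument of Lemma \ref{L2-1}. In operator-theoretic language, the previous case reads
\[
\|T_{S_{p'}}\|_{\ell_2^{p'}(S_{p'})\to\ell_2^{p}(S_{p'})}\le 2^{1/p}.
\]
Since $S_{p'}$ is reflexive with $(S_{p'})^*=S_p$, the identifications $\ell_2^{r}(Y)^*=\ell_2^{r'}(Y^*)$ combined with $T_X^*=T_{X^*}$ give
\[
\|T_{S_p}\|_{\ell_2^{p'}(S_p)\to\ell_2^{p}(S_p)}=\|T_{S_{p'}}\|_{\ell_2^{p'}(S_{p'})\to\ell_2^{p}(S_{p'})}\le 2^{1/p},
\]
which, by the Remark following the definition of Clarkson roundness, is precisely the statement that $S_p$ has Clarkson roundness $p'$.

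The main obstacle is the nontrivial noncommutative Clarkson inequality invoked in the first case; it is appreciably harder than its commutative counterpart but is directly available from \cite{M}. The duality step in the second case is an essentially formal analogue of Lemma \ref{L2-1}, requiring only that Clarkson roundness passes to the dual, which follows at the operator level since $T_X$ and its adjoint $T_{X^*}$ have equal norm.
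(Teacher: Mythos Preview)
Your proposal is correct. The paper itself does not supply a proof of this theorem: it simply records that $S_p$ satisfies Clarkson's inequalities and cites McCarthy \cite{M} for the full range $1<p<\infty$. Your treatment of the case $1<p\le 2$ is therefore identical to the paper's, and for $2\le p<\infty$ you supply a clean duality argument (an adaptation of Lemma~\ref{L2-1} to the $(p',p)$-Clarkson setting) that derives the required inequality from the first case rather than citing \cite{M} directly. This duality step is sound: passing to the adjoint of $T_{S_{p'}}:\ell_2^{p'}(S_{p'})\to\ell_2^{p}(S_{p'})$ indeed yields $T_{S_p}:\ell_2^{p'}(S_p)\to\ell_2^{p}(S_p)$ with the same norm, which is exactly the operator-norm formulation of Clarkson roundness $p'$ for $S_p$. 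So your argument is a mild elaboration of the paper's bare citation, and the duality observation is a pleasant bonus showing that only the case $1<p\le 2$ of McCarthy's inequalities is really needed.
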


Thus, Lemma \ref{CRR} together with the fact that $S_{p}$ contains an isometric copy of $\ell^p$ allows us to compute the maximal roundess of $S_{p}$. This result was first proved in \cite{LTW}.

\begin{cor}
If $1\leq p<\infty$, then the maximal roundess of $S_{p}$ is $\min(p,p' )$.
\end{cor}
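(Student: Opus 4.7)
The plan is to combine the Clarkson roundness statement for $S_p$ (the preceding theorem) with the general mechanism already used for the sequence spaces: Clarkson roundness implies roundness via Lemma \ref{CRR}, and an isometric embedding of $\ell^p$ provides the matching upper bound via the general fact recorded in Section \ref{Introduction} that isometric embedding reverses maximal roundness.

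First I would dispose of the case $1<p<\infty$. Applying Lemma \ref{CRR} to the preceding theorem immediately yields that $S_p$ has roundness $\min(p,p')$, so $\mr(S_p)\geq\min(p,p')$. For the reverse inequality, I would exhibit an isometric copy of $\ell^p$ inside $S_p$: sending a sequence $(a_n)\in\ell^p$ to the diagonal operator $\mathrm{diag}(a_1,a_2,\dots)$ is an isometry from $\ell^p$ into $S_p$, because the singular values of a diagonal operator are the absolute values of its diagonal entries, and the Schatten $p$-norm is the $\ell^p$-norm of the singular values. By the monotonicity of $\mr$ under isometric embedding and by Corollary \ref{CL}, one gets $\mr(S_p)\leq\mr(\ell^p)=\min(p,p')$.

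For the remaining case $p=1$, the same diagonal embedding shows that $\ell^1$ sits isometrically in $S_1$, so $\mr(S_1)\leq\mr(\ell^1)=1$; combined with the trivial bound $\mr(X)\geq 1$ for any Banach space, this yields $\mr(S_1)=1=\min(1,\infty)$, matching the stated formula under the convention $1'=\infty$.

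There is no real obstacle here, since the work has all been done upstream: the preceding theorem supplies the analytic content (Clarkson's inequalities for Schatten classes), Lemma \ref{CRR} converts this into a roundness statement, and Corollary \ref{CL} supplies the sharpness. The only step that requires a brief verification rather than a direct citation is the isometric embedding $\ell^p\hookrightarrow S_p$, which is a standard and immediate consequence of the definition of the Schatten norm in terms of singular values.
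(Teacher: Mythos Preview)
Your proposal is correct and follows essentially the same route as the paper: apply Lemma \ref{CRR} to the preceding Clarkson roundness theorem to get the lower bound, and use the isometric embedding of $\ell^p$ into $S_p$ together with Corollary \ref{CL} for the matching upper bound. The paper's proof is a one-line remark to this effect, while you have spelled out the diagonal embedding and the $p=1$ case explicitly, but the argument is identical.
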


\section{Duality, smoothness and convexity}\label{Duality and Coroundness}

We now introduce the natural dual property of roundness. Note that this property appears at least implicitly in \cite{KT3} (see also \cite{KTY,THK}).

\begin{defn} For $2 \le p < \infty$ we shall say that a Banach space $X$ has \textit{coroundness $p$} if
   \[  \norm{x+y}^{p} + \norm{x-y}^{p} \le 2^{p-1} \bigl(\norm{x}^p + \norm{y}^p \bigr)
   \]
for all $x,y \in X$.
\end{defn}

While $X$ has roundness $p$ if and only if $\nu_X(p) = 2$ we have that $X$ has coroundness $p$ if and only if $\nu_{X}(p) = 2^{p-1}$. That is, both roundness and coroundness correspond to the situation where $\nu_X$ takes on its minimum possible value for the given value of $p$.

\begin{rem}\label{Type and Cotype}
	Roundness and coroundness are intimately related to the concepts of Rademacher type and cotype. To detail this relation, let us recall the definitions of Rademacher type and cotype.
	
	Let $1\leq p\leq 2$. A Banach space $X$ is said to be of \textit{Rademacher type $p$} if there exists some $K>0$ such that
		\[ \int_{0}^{1}\bigg\|\sum_{i=1}^{n}r_{i}(t)x_{i}\bigg\|^{p}dt\leq K\sum_{i=1}^{n}\|x_{i}\|^{p}
		\]
	for all $x_{1},\dots,x_{n}\in X$ and $n\geq 1$, where $r_{i}(t)$ are the Rademacher functions, that is, $r_{i}(t)=\mbox{sgn}(\sin(2^{i}\pi t))$. The smallest constant $K>0$ for which the above inequalities hold is denoted by $T_{p}(X)$.
	
	Let $2\leq q<\infty$. A Banach space $X$ is said to be of \textit{Rademacher cotype $q$} if there exists some $K>0$ such that
	\[ \sum_{i=1}^{n}\|x_{i}\|^{q}\leq K\int_{0}^{1}\bigg\|\sum_{i=1}^{n}r_{i}(t)x_{i}\bigg\|^{q}dt
	\]
	for all $x_{1},\dots,x_{n}\in X$ and $n\geq 1$. The smallest constant $K>0$ for which the above inequalities hold is denoted by $C_{q}(X)$.
	
	It follows from Theorems 2.2 and 2.4 in \cite{KT3} that if $1\leq p\leq 2$ and $2\leq q<\infty$ then a Banach space $X$ has Rademacher type $p$ if and only if it has type $p$ with $T_{p}(X)=1$ and Rademacher cotype $q$ if and only if it has cotype $q$ with $C_{q}(X)=1$. That is, roundness and coroundness are simply special cases of Rademacher type and cotype.
\end{rem}

\begin{figure}[ht!]
\begin{center}
\begin{tikzpicture}[xscale=3.0]

\draw[->,red] (1,1.5) -- (3,1.5);
\draw[->,red] (1,1.5) -- (1,7);
\draw[red] (3,1.5) node[below] {$p$};

\draw[red] (0.95,2) -- (1.05,2);
\draw (0.95,2) node[left] {$2$};

\draw[red] (1,1.6) -- (1,1.4);
\draw (1,1.4) node[below] {$1$};

\draw[red] (2,1.6) -- (2,1.4);
\draw (2,1.4) node[below] {$2$};

\draw[red] (1.6,1.6) -- (1.6,1.4);
\draw (1.6,1.4) node[below] {$\mr(X)$};

\draw[red] (2.667,1.6) -- (2.667,1.4);
\draw (2.667,1.4) node[below] {$\mc(X)$};

\draw[dashed,blue] (1,2) -- (2,2);
\draw[dashed,blue] (2., 2.) -- (2.03, 2.04) -- (2.07, 2.10) -- (2.10, 2.14) -- (2.13, 2.19) -- (2.17, 2.25) -- (2.20, 2.30) -- (2.23, 2.35) -- (2.27, 2.41) -- (2.30, 2.46) -- (2.33, 2.51) -- (2.37, 2.58) -- (2.40, 2.64) -- (2.43, 2.69) -- (2.47, 2.77) -- (2.50, 2.83) -- (2.53, 2.89) -- (2.57, 2.97) -- (2.60, 3.03) -- (2.63, 3.10) -- (2.67, 3.18) -- (2.70, 3.25) -- (2.73, 3.32) -- (2.77, 3.41) -- (2.80, 3.48) -- (2.83, 3.56) -- (2.87, 3.66) -- (2.90, 3.73) -- (2.93, 3.81) -- (2.97, 3.92) -- (3., 4.);
\draw (2.5,2.5) node[right] {$2^{p-1}$};
\draw[dashed,blue] (1., 2.) -- (1.03, 2.04) -- (1.07, 2.10) -- (1.10, 2.14) -- (1.13, 2.19) -- (1.17, 2.25) -- (1.20, 2.30) -- (1.23, 2.35) -- (1.27, 2.41) -- (1.30, 2.46) -- (1.33, 2.51) -- (1.37, 2.58) -- (1.40, 2.64) -- (1.43, 2.69) -- (1.47, 2.77) -- (1.50, 2.83) -- (1.53, 2.89) -- (1.57, 2.97) -- (1.60, 3.03) -- (1.63, 3.10) -- (1.67, 3.18) -- (1.70, 3.25) -- (1.73, 3.32) -- (1.77, 3.41) -- (1.80, 3.48) -- (1.83, 3.56) -- (1.87, 3.66) -- (1.90, 3.73) -- (1.93, 3.81) -- (1.97, 3.92) -- (2., 4.) -- (2.03, 4.08) -- (2.07, 4.20) -- (2.10, 4.29) -- (2.13, 4.38) -- (2.17, 4.50) -- (2.20, 4.59) -- (2.23, 4.69) -- (2.27, 4.82) -- (2.30, 4.92) -- (2.33, 5.03) -- (2.37, 5.17) -- (2.40, 5.28) -- (2.43, 5.39) -- (2.47, 5.54) -- (2.50, 5.66) -- (2.53, 5.78) -- (2.57, 5.94) -- (2.60, 6.06);
\draw (2,4.05) node[above] {$2^p$};

\draw[thick]
(1., 2.0) -- (1.03, 2.0) -- (1.07, 2.0) -- (1.10, 2.0) -- (1.13, 2.0) -- (1.17, 2.0) -- (1.20, 2.0) -- (1.23, 2.0) -- (1.27, 2.0) -- (1.30, 2.0) -- (1.33, 2.0) -- (1.37, 2.0) -- (1.40, 2.0) -- (1.43, 2.0) -- (1.47, 2.0) -- (1.50, 2.0) -- (1.53, 2.0) -- (1.57, 2.0) -- (1.60, 2.) -- (1.63, 2.0262) -- (1.67, 2.0617) -- (1.70, 2.0885) -- (1.73, 2.1158) -- (1.77, 2.1528) -- (1.80, 2.1810) -- (1.83, 2.2096) -- (1.87, 2.2482) -- (1.90, 2.2776) -- (1.93, 2.3073) -- (1.97, 2.3476) -- (2., 2.3784) -- (2.03, 2.4096) -- (2.07, 2.4517) -- (2.10, 2.4837) -- (2.13, 2.5161) -- (2.17, 2.5601) -- (2.20, 2.5937) -- (2.23, 2.6277) -- (2.27, 2.6736) -- (2.30, 2.7085) -- (2.33, 2.7438) -- (2.37, 2.7918) -- (2.40, 2.8284) -- (2.43, 2.8655) -- (2.47, 2.9156) -- (2.50, 2.9537) -- (2.53, 2.9922) -- (2.57, 3.0445) -- (2.60, 3.0844) -- (2.63, 3.1249) -- (2.67, 3.1821) -- (2.70, 3.2490) -- (2.73, 3.3173) -- (2.77, 3.4105) -- (2.80, 3.4822) -- (2.83, 3.5554) -- (2.87, 3.6553) -- (2.90, 3.7321) -- (2.93, 3.8106) -- (2.97, 3.9177) -- (3., 4.);

\draw (2,2.4) node[above] {$\nu_X(p)$};
\end{tikzpicture}
\caption{Roundness, coroundness and $\nu_X(p)$.}\label{nu-pic}
\end{center}
\end{figure}
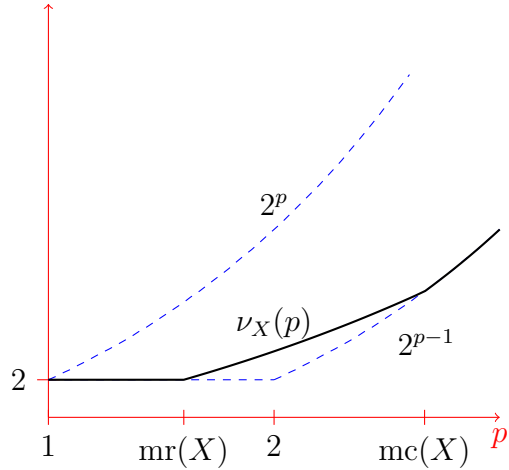

We now show that coroundness is in fact a property that is dual to roundness. In what follows, by convention we will say that every Banach space has coroundness $\infty$.

\begin{prop}\label{Duality}
If $1\leq p\leq\infty$ then $X$ has roundness $p$ if and only if $X^{\ast}$ has coroundness $p' $. Also, $X$ has coroundness $p$ if and only if $X^{\ast}$ has roundness $p' $.
\end{prop}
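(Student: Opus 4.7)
The plan is to reduce the statement to the duality identity for the modulus of roundness, Lemma \ref{L2-1}, after rephrasing both roundness and coroundness as equations for $\nu_X$.

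Step 1 (degenerate cases). Handle $p=1$ and $p=\infty$ directly. If $p=1$, then $p'=\infty$; since $\nu_X(1)=2$ for every Banach space, $X$ automatically has roundness $1$, and by convention $X^*$ automatically has coroundness $\infty$, so both biconditionals hold vacuously. The case $p=\infty$ is symmetric.

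Step 2 (reformulation). For $1<p<\infty$, I would recall the observations already made just after the definition of coroundness: $X$ has roundness $p$ iff $\nu_X(p)=2$, and $X$ has coroundness $p$ iff $\nu_X(p)=2^{p-1}$. These are precisely the smallest possible values that $\nu_X(p)$ can take for the respective ranges of $p$.

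Step 3 (apply the duality lemma). By Lemma \ref{L2-1},
\[
\nu_X(p)^{1/p} \;=\; \nu_{X^*}(p')^{1/p'}.
\]
Using the arithmetic identity $p'/p = p'-1$, which comes from $1/p+1/p'=1$, the two cases unfold as follows:
\begin{itemize}
\item $\nu_X(p)=2 \iff \nu_X(p)^{1/p}=2^{1/p} \iff \nu_{X^*}(p')^{1/p'}=2^{1/p} \iff \nu_{X^*}(p')=2^{p'/p}=2^{p'-1}$, which gives the first biconditional: $X$ has roundness $p$ iff $X^*$ has coroundness $p'$.
\item $\nu_X(p)=2^{p-1} \iff \nu_X(p)^{1/p}=2^{(p-1)/p}=2^{1/p'} \iff \nu_{X^*}(p')^{1/p'}=2^{1/p'} \iff \nu_{X^*}(p')=2$, which gives the second biconditional: $X$ has coroundness $p$ iff $X^*$ has roundness $p'$.
\end{itemize}

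There is no real obstacle; the argument is bookkeeping with exponents once the operator-theoretic reformulation of $\nu_X$ and its dual have been set up. The only point one should be careful with is checking that the characterisations of roundness and coroundness in terms of attaining the extreme values $2$ and $2^{p-1}$ of $\nu_X(p)$ are valid, but this was already noted immediately after the definition of coroundness.
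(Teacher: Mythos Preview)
Your proposal is correct and follows essentially the same approach as the paper's proof: dispose of the endpoint cases, then use Lemma~\ref{L2-1} together with the characterisations $\nu_X(p)=2$ and $\nu_X(p)=2^{p-1}$ for roundness and coroundness, and verify the exponent arithmetic $p'/p=p'-1$. The paper's version is more terse (it writes out only one of the four implications explicitly), but the argument is identical.
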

\begin{proof}
For $p=1,\infty$ the statement is trivial. Hence we may assume that $1<p<\infty$. In this case, the result follows from Lemma \ref{L2-1}. Indeed, if $\nu_{X}(p)=2$ then
	\[\nu_{X^{\ast}}(p' )=\nu_{X}(p)^{p' /p}=2^{p' -1}.
	\]
The other implications are proved similarly.
\end{proof}

\begin{cor}
If $2\leq p\leq p_{1}\leq\infty$ and $X$ has coroundness $p$ then $X$ also has coroundness $p_{1}$.
\end{cor}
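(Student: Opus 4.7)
The plan is to bootstrap this off the analogous monotonicity property for roundness stated as item (ii) in the introduction, using the duality between roundness and coroundness already established in Proposition \ref{Duality}. Directly chasing an inequality of the form $\|x+y\|^{p_1}+\|x-y\|^{p_1} \le 2^{p_1-1}(\|x\|^{p_1}+\|y\|^{p_1})$ from the corresponding one at exponent $p$ via power-mean gymnastics is possible but unpleasant; the dual route is essentially free.

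First dispose of the boundary case $p_1=\infty$, which requires no argument because every Banach space has coroundness $\infty$ by convention. So assume $2 \le p \le p_1 < \infty$, and observe that the conjugate indices then satisfy $1 < p_1' \le p' \le 2$. The main argument is a three-step sandwich. Step one: by Proposition \ref{Duality}, the assumption that $X$ has coroundness $p$ is equivalent to $X^{\ast}$ having roundness $p'$. Step two: by Enflo's monotonicity of roundness for Banach spaces (item (ii) in the list of Enflo's results quoted in the Introduction), since $X^{\ast}$ has roundness $p'$ it also has roundness $q$ for every $1 \le q \le p'$, and in particular for $q = p_1'$. Step three: apply Proposition \ref{Duality} in the reverse direction, using $(p_1')' = p_1$, to convert the statement ``$X^{\ast}$ has roundness $p_1'$'' back into ``$X$ has coroundness $p_1$,'' which is the desired conclusion.

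I do not expect any real obstacle. The only things worth a brief sanity check are the conjugate-index inequalities (the map $p \mapsto p'$ is order-reversing on $[1,\infty]$ under the $1 \leftrightarrow \infty$ convention from the Notation subsection), and the applicability of Enflo's monotonicity at the endpoint $p_1' = 1$, which is immediate since every Banach space has roundness $1$ (equivalently $\nu_X(1) = 2$).
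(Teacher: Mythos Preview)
Your proof is correct and follows essentially the same route as the paper: dualise via Proposition~\ref{Duality} to pass to roundness of $X^{\ast}$, invoke Enflo's monotonicity of roundness there, then dualise back. The paper's version is a terse one-liner to the same effect; your explicit handling of the $p_1=\infty$ endpoint and the conjugate-index bookkeeping only makes the argument more readable.
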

\begin{proof}
Use duality via Proposition \ref{Duality}, together with the fact that if $X$ has roundness $q$ where $1\leq q_{1}\leq q$, then $X$ also has roundness $q_{1}$.
\end{proof}

It now makes sense to speak of the minimal coroundness that a Banach space may possess. We will denote this by $\mc(X)=\inf\{p:\nu_{X}(p)=2^{p-1}\}$ (where by convention we set $\mc(X)=\infty$ if this infimum is taken over the empty set). Note that $2\leq\mc(X)\leq\infty$. Clearly if $X$ embeds isometrically in $Y$ then $\mc(Y)\geq\mc(X)$. With this notation, Proposition \ref{Duality} gives the following.

\begin{cor}\label{Duality2}
The maximal roundness and minimal coroundness of a Banach space $X$ and its dual space $X^{\ast}$ satisfy $\mr(X)=\mc(X^{\ast})^{\prime}$ and $\mc(X)=\mr(X^{\ast})^{\prime}$. In particular, $\mr(X)=\mr(X^{\ast\ast})$ and $\mc(X)=\mc(X^{\ast\ast})$.
\end{cor}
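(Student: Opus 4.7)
The plan is to translate the extremal statements directly from the pointwise duality already established in Proposition \ref{Duality}, using the interval structure of the sets of valid roundness and coroundness exponents together with the continuity of $\nu_X$ from Corollary \ref{T42}.

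First I would record what the two sets look like. By Enflo's second property, the set of roundness exponents of $X$ is an interval of the form $[1,\mr(X))$ or $[1,\mr(X)]$; continuity of $\nu_X$ at $\mr(X)$ (Corollary \ref{T42}) forces $\nu_X(\mr(X))=2$, so the set is precisely $[1,\mr(X)]$. Dually, the coroundness monotonicity corollary just above gives that the set of coroundness exponents of $X$ is an interval of the form $[\mc(X),\infty]$ (interpreting $\infty$ via the convention), and again continuity of $\nu_X$ shows $\mc(X)$ itself lies in this set whenever $\mc(X)<\infty$.

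Next I would invoke Proposition \ref{Duality} pointwise for each $1<p<\infty$: $p\in r(X)$ iff $p'\in$ coroundness$(X^{*})$. Since $p\mapsto p'$ is an order-reversing involution of $(1,\infty)$ extended continuously by $1'=\infty$, it maps the interval $[1,\mr(X)]$ bijectively onto $[\mr(X)',\infty]$. This image must coincide with the set of coroundness exponents of $X^{*}$, which is $[\mc(X^{*}),\infty]$. Comparing lower endpoints gives $\mc(X^{*})=\mr(X)'$, and the same argument with roles reversed gives $\mr(X^{*})=\mc(X)'$; equivalently $\mr(X)=\mc(X^{*})'$ and $\mc(X)=\mr(X^{*})'$. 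The edge case $\mr(X)=1$ needs only the observation that if $X^{*}$ had any finite coroundness $q$ then duality would force $q'>1\in r(X)$, contradicting $\mr(X)=1$; so in that case $\mc(X^{*})=\infty=1'$.

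Finally, for the bidual identities, I would simply iterate. Applying the first identity to $X^{*}$ in place of $X$ yields $\mr(X^{*})=\mc(X^{**})'$; substituting into $\mc(X)=\mr(X^{*})'$ and using involutivity of $q\mapsto q'$ gives $\mc(X)=\mc(X^{**})$. An analogous substitution using $\mc(X^{*})=\mr(X^{**})'$ in $\mr(X)=\mc(X^{*})'$ gives $\mr(X)=\mr(X^{**})$. The only subtle point in the whole argument is the attainment of the extremum (i.e.\ that $\mr(X)$ itself is a roundness exponent and, when finite, $\mc(X)$ itself is a coroundness exponent); this is exactly what the continuity half of Corollary \ref{T42} is there to supply, so no new work is required beyond citing it.
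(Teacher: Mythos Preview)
Your argument is correct and follows the same route as the paper: you deduce the identities directly from Proposition~\ref{Duality}, just with the details (interval structure, attainment via continuity of $\nu_X$, and the edge case $\mr(X)=1$) spelled out explicitly where the paper leaves them implicit. For the bidual identities the paper implicitly relies on $\nu_{X^{\ast\ast}}=\nu_X$ noted after Lemma~\ref{L2-1}, whereas you iterate the duality formula; both are immediate and amount to the same thing.
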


\begin{rem}\label{Enflo Claim}
	In light of the above corollary it is a very natural question to ask whether or not one has $\mr(X)=\mr(X^{\ast})$ or $\mc(X)=\mc(X^{\ast})$ for all Banach spaces. In fact, when $X$ is $\ell^{p}$, $S_{p}$ or $\ell^{p}(\ell^{q})$ the results of Sections \ref{Roundness Of Standard Spaces} and \ref{Duality and Coroundness} show that these relations do indeed hold in these cases. However, it turns out that these relations do not hold in general. In fact, Enflo in \cite{E3} states without proof that it is possible to construct a $2$-dimensional Banach space whose maximal roundness is not equal to that of its dual. We shall exhibit an explicit example of such a space in Section \ref{Further Examples} below.
\end{rem}

The following well known geometric properties and functions (see for example \cite{LT}, Chapter 1 Section e) are intimately related to the study of roundness and coroundness of a Banach space.

\begin{defn}\label{Smoothness and Convexity} Let $X$ be a Banach space.
	\begin{itemize}
		\item[(i)]  $X$ is said to be \textit{uniformly smooth} if for all $\epsilon>0$ there exists some $\delta>0$ such that if $x,y\in X$ with $\norm{x}=1$ and $\norm{y}\leq\delta$ then
		\[ \norm{x+y}+\norm{x-y}\leq 2+\epsilon\norm{y}.
		\]
		\item[(ii)] $X$ is said to be \textit{uniformly convex} if for all $\epsilon>0$  there exists some $\delta>0$ such that if $x,y\in X$ with $\norm{x-y}\geq\epsilon$ then
		\[ \bigg\|\frac{x+y}{2}\bigg\|\leq 1-\delta.
		\]
		\item[(iii)] The \textit{modulus of uniform smoothness} of $X$ is the function $\rho_{X}:[0,\infty)\rightarrow\mathbb{R}$ defined by
		\[ \rho_{X}(t)=\sup\bigg\{\frac{1}{2}(\norm{x+y}+\norm{x-y})-1:\norm{x}=1,\norm{y}=t\bigg\}.
		\]
		\item[(iv)] The \textit{modulus of uniform convexity} of $X$ is the function $\delta_{X}:[0,2]\rightarrow[0,1]$ defined by
		\[ \delta_{X}(\epsilon)=\inf\bigg\{1-\frac{\norm{x+y}}{2}:\norm{x}=\norm{y}=1,\norm{x-y}\geq\epsilon\bigg\}.
		\]
	\end{itemize}
\end{defn}

We refer the reader to \cite{LT} for the following facts about uniformly smooth and uniformly convex spaces.

\begin{thm}\label{Omnibus} Let $X$ be a Banach space. Then
\begin{itemize}
\item[(i)] $X$ is uniformly smooth if and only if $\lim_{t\rightarrow 0^{+}}\rho_{X}(t)/t=0$.
\item[(ii)]  $X$ is uniformly convex if and only if $\delta_{X}(\epsilon)>0$ for all $\epsilon\in(0,2)$.
\item[(iii)\label{Reflexive}] if $X$ is uniformly smooth or uniformly convex then it is reflexive.
\item[(iv)]\label{SCD}  $X$ is uniformly smooth if and only if $X^{\ast}$ is uniformly convex. Also, by reflexivity, $X$ is uniformly convex if and only if $X^{\ast}$ is uniformly smooth.
\end{itemize}
\end{thm}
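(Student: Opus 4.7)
The theorem comprises four standard facts from the geometry of Banach spaces, and I would tackle them in the order (i), (ii), (iv), (iii), since (iv) provides the bridge needed for the uniformly smooth half of (iii).

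Parts (i) and (ii) are essentially tautological reformulations of the definitions. For (i), rewriting the uniform smoothness condition as $\tfrac{1}{2}(\norm{x+y}+\norm{x-y}) - 1 \leq \tfrac{\epsilon}{2}\norm{y}$ and taking the supremum over $\norm{x} = 1$ with $\norm{y} = t$ fixed shows that uniform smoothness is equivalent to the existence, for each $\epsilon > 0$, of a $\delta > 0$ with $\rho_X(t) \leq \tfrac{\epsilon}{2}\, t$ for all $t \leq \delta$, which is precisely $\lim_{t \to 0^+} \rho_X(t)/t = 0$. For (ii), $\delta_X(\epsilon)$ is by design the largest admissible $\delta$ in the definition of uniform convexity, so the equivalence follows immediately by unpacking the infimum.

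For (iv), the key tool is the Lindenstrauss duality formula
\[ \rho_{X^*}(t) = \sup_{0 \leq \epsilon \leq 2} \bigl\{ \tfrac{t\epsilon}{2} - \delta_X(\epsilon) \bigr\}, \]
which expresses $\rho_{X^*}$ as essentially the Legendre transform of $\delta_X$. I would derive this by writing $\norm{x^* + y^*} + \norm{x^* - y^*}$ as a supremum over $x,y \in B_X$ of $x^*(x+y) + y^*(x-y)$, parametrising by $\epsilon = \norm{x-y}$, and bounding $\norm{x+y}$ via $\delta_X$. Once the formula is in hand, the equivalence between $\lim_{t\to 0^+}\rho_{X^*}(t)/t = 0$ and the positivity of $\delta_X$ on $(0,2)$ follows by elementary analysis, giving the implication that $X$ is uniformly smooth iff $X^*$ is uniformly convex. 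The second equivalence in (iv) will follow once we have reflexivity from (iii), by applying the first to $X^*$ in place of $X$.

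Finally for (iii), the uniformly convex case is the classical Milman--Pettis theorem. I would prove it via Goldstine's theorem: any $x^{**} \in S_{X^{**}}$ is a weak$^*$ limit of a net $(x_\alpha) \subset B_X$, and a Hahn--Banach argument combined with uniform convexity forces $(x_\alpha)$ to be norm Cauchy, so it converges in $X$ to a preimage of $x^{**}$. The uniformly smooth case then follows by applying Milman--Pettis to $X^*$, which is uniformly convex by (iv). The main obstacle is the Milman--Pettis step, which requires the subtle interplay between Goldstine and uniform convexity; a secondary obstacle is a careful derivation of the Lindenstrauss duality formula, whose proof involves interchanging several suprema and applying the definition of $\delta_X$ in both directions.
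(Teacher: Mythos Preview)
Your outline is correct and follows the standard route found in textbooks such as Lindenstrauss--Tzafriri; in fact the paper does not prove this theorem at all, but simply quotes it with the sentence ``We refer the reader to \cite{LT} for the following facts about uniformly smooth and uniformly convex spaces.'' So there is no proof in the paper to compare against, and what you have sketched is essentially the argument one finds in that reference.

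One small slip worth noting: the duality formula you wrote down, $\rho_{X^*}(t) = \sup_{0 \leq \epsilon \leq 2}\{t\epsilon/2 - \delta_X(\epsilon)\}$, relates the smoothness of $X^*$ to the convexity of $X$, so it directly yields the second equivalence in (iv) (``$X$ uniformly convex iff $X^*$ uniformly smooth'') rather than the first. For the first equivalence you want the companion formula $\rho_X(t) = \sup_{0 \leq \epsilon \leq 2}\{t\epsilon/2 - \delta_{X^*}(\epsilon)\}$, which is proved in exactly the same way; with that in hand neither half of (iv) needs reflexivity, and (iii) then follows cleanly from Milman--Pettis as you describe.
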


It is now possible to demonstrate that roundness is a smoothness property and that its dual property coroundess is a convexity property.

\begin{thm}\label{Geometry}
Let $X$ be a Banach space.
\begin{itemize}
	\item [(i)] If $\mr(X)>1$ then $X$ is uniformly smooth.
	\item [(ii)] If $\mc(X)<\infty$ then $X$ is uniformly convex.
\end{itemize}
\end{thm}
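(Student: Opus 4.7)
The plan for part (i) is to bound the modulus of uniform smoothness $\rho_X(t)$ directly, then invoke Theorem \ref{Omnibus}(i). First I would use the hypothesis $\mr(X)>1$ together with the definition of $\mr$ to pick some $p$ with $1<p\le\mr(X)$, so that $p\in r(X,\|\cdot\|)$. For arbitrary $x\in X$ with $\|x\|=1$ and $y\in X$ with $\|y\|=t$, Enflo's form of roundness gives
\[
\|x+y\|^{p}+\|x-y\|^{p}\le 2(1+t^{p}).
\]
Convexity of $s\mapsto s^{p}$ on $[0,\infty)$ converts this sum-of-$p$-th-powers estimate into an estimate on the arithmetic mean:
\[
\left(\frac{\|x+y\|+\|x-y\|}{2}\right)^{p}\le\frac{\|x+y\|^{p}+\|x-y\|^{p}}{2}\le 1+t^{p}.
\]
Taking $p$-th roots, subtracting $1$, and then taking the supremum over admissible $x,y$ yields $\rho_{X}(t)\le (1+t^{p})^{1/p}-1$, which is $O(t^{p})$ as $t\to 0^{+}$. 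Since $p>1$, we get $\rho_{X}(t)/t\to 0$, and uniform smoothness follows from Theorem \ref{Omnibus}(i).

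For part (ii), the cleanest route is to reduce to part (i) by duality. If $\mc(X)<\infty$, then $X$ has coroundness $p$ for some $p<\infty$, so by Proposition \ref{Duality} (or equivalently Corollary \ref{Duality2}) the dual $X^{\ast}$ has roundness $p'>1$, hence $\mr(X^{\ast})>1$. Part (i) applied to $X^{\ast}$ shows $X^{\ast}$ is uniformly smooth. By Theorem \ref{Omnibus}(iii), $X^{\ast}$ is reflexive, so $X$ is reflexive and canonically equal to $X^{\ast\ast}$. Theorem \ref{Omnibus}(iv) then transfers uniform smoothness of $X^{\ast}$ into uniform convexity of $X^{\ast\ast}=X$, which is what we want.

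A direct argument is also available for (ii), should one prefer to avoid reflexivity. For $x,y\in X$ with $\|x\|=\|y\|=1$ and $\|x-y\|\ge\varepsilon$, the coroundness inequality with exponent $p$ gives $\|x+y\|^{p}+\varepsilon^{p}\le 2^{p}$, so
\[
\left\|\tfrac{x+y}{2}\right\|\le \bigl(1-(\varepsilon/2)^{p}\bigr)^{1/p}<1,
\]
which shows $\delta_{X}(\varepsilon)>0$ for every $\varepsilon\in(0,2]$, and uniform convexity follows from Theorem \ref{Omnibus}(ii).

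I do not expect a serious obstacle here. The only pointwise subtlety is ensuring in (i) that $\mr(X)>1$ really does supply some $p>1$ with $p\in r(X)$, but this is immediate from the definition of the supremum combined with Enflo's monotonicity property (ii) listed near the start of the paper. The rest of each argument is elementary inequality manipulation, and the duality step is the most elegant part of (ii): it leverages the machinery of Section \ref{Duality and Coroundness} to avoid a second convexity computation.
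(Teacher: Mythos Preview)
Your argument is correct. Your direct proof of (ii) is exactly the paper's, but the overall structure is reversed: the paper proves (ii) directly first and then deduces (i) by duality, whereas you prove (i) directly and deduce (ii) by duality (before offering the direct alternative).

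The practical difference is that the paper's duality step is slightly cleaner. From $\mr(X)>1$ one gets $\mc(X^{\ast})<\infty$, hence $X^{\ast}$ is uniformly convex, and Theorem~\ref{Omnibus}(iv) immediately yields that $X$ is uniformly smooth---no reflexivity needed, since the statement ``$X^{\ast}$ uniformly convex $\Rightarrow$ $X$ uniformly smooth'' is about $X$, not $X^{\ast\ast}$. Your duality route for (ii) has to pass through $X^{\ast\ast}$ and so requires the reflexivity detour you flagged. On the other hand, your direct proof of (i) is a small bonus: the bound $\rho_{X}(t)\le(1+t^{p})^{1/p}-1$ actually shows $\rho_{X}$ is of power type $p$, anticipating Theorem~\ref{Characterisation}, which the paper's duality argument does not do.
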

\begin{proof}
We begin by proving the second statement. So, suppose that $X$ has coroundness $p$ for some $2\leq p<\infty$. Then if $x,y\in X$ and $0<\epsilon<2$ with $\norm{x}=\norm{y}=1$ and $\norm{x-y}\geq\epsilon$ we have that
	\[ \norm{x+y}^p+\norm{x-y}^p\leq 2^{p-1}(\norm{x}^p+\norm{y}^p)=2^p
	\]
so that
	\[ \bigg\|\frac{x+y}{2}\bigg\|\leq(1-(\epsilon/2)^p)^{1/p}.
	\]
Hence by taking $\delta=1-(1-(\epsilon/2)^p)^{1/p}>0$ we see that $X$ is uniformly convex, which proves the second statement.

To prove the first statement, note that if $\mr(X)>1$ then Corollary \ref{Duality2} implies that $\mc(X^{\ast})<\infty$. By what was just proved this implies that $X^{\ast}$ is uniformly convex. But then by Theorem~\ref{Omnibus}(iv) we have that $X$ is uniformly smooth.
\end{proof}

Together with Theorem~\ref{Omnibus}(iii) this gives the following.

\begin{cor}
If $\mr(X)>1$ or $\mc(X)<\infty$ then $X$ is reflexive.
\end{cor}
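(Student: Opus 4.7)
The plan is essentially a direct concatenation of the two results flagged just above the corollary, so there is no real obstacle. I would split into the two hypothesised cases and chain the implications.

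First, suppose $\mr(X) > 1$. By Theorem~\ref{Geometry}(i), $X$ is then uniformly smooth. Theorem~\ref{Omnibus}(iii) then immediately gives that $X$ is reflexive.

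Second, suppose $\mc(X) < \infty$. By Theorem~\ref{Geometry}(ii), $X$ is uniformly convex, and again Theorem~\ref{Omnibus}(iii) yields reflexivity.

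Since in both cases the conclusion follows, the corollary is established. The only choice to make in presentation is whether to emphasise the duality angle (namely, that by Corollary~\ref{Duality2} the two hypotheses are essentially dual to one another, so one case reduces to the other via $X \leftrightarrow X^{*}$), but this is an aesthetic matter; the cleanest write-up is the direct two-case argument above, since Theorem~\ref{Geometry} already handled the duality internally. The proof is therefore a three-line chain: hypothesis $\Rightarrow$ uniform smoothness/convexity $\Rightarrow$ reflexivity.
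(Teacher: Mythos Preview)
Your proposal is correct and matches the paper's approach exactly: the paper simply states that the corollary follows from combining Theorem~\ref{Geometry} with Theorem~\ref{Omnibus}(iii), which is precisely your two-case chain of implications.
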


Finally, we finish this section by noting that it is easy to compute the coroundness of all of those spaces whose roundness was computed in the previous section, by means of duality.

\begin{cor}
Let $1\leq p<\infty$. Then $\ell^p$ has minimal coroundness $\max(p,p' )$.
\end{cor}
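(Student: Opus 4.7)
The plan is to exploit the duality relation $\mc(X)=\mr(X^{\ast})^{\prime}$ from Corollary~\ref{Duality2}, together with the formula $\mr(\ell^{p})=\min(p,p^{\prime})$ already established in Corollary~\ref{CL}. For $1<p<\infty$ this is essentially a one-line computation, so the only mildly delicate point is the endpoint case $p=1$, where the dual space is not itself an $\ell^{r}$ space covered by Corollary~\ref{CL}.

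First I would dispose of the generic case $1<p<\infty$. Since $(\ell^{p})^{\ast}=\ell^{p^{\prime}}$ and $1<p^{\prime}<\infty$, Corollary~\ref{CL} gives
\[
\mr(\ell^{p^{\prime}})=\min(p^{\prime},(p^{\prime})^{\prime})=\min(p,p^{\prime}).
\]
Corollary~\ref{Duality2} then yields
\[
\mc(\ell^{p})=\mr((\ell^{p})^{\ast})^{\prime}=\min(p,p^{\prime})^{\prime}.
\]
A short case check (if $p\le 2$ then $\min(p,p^{\prime})=p$, whose conjugate is $p^{\prime}=\max(p,p^{\prime})$; the case $p\ge 2$ is symmetric) shows that $\min(p,p^{\prime})^{\prime}=\max(p,p^{\prime})$, which is the desired equality.

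For $p=1$ we must show $\mc(\ell^{1})=\infty$. The cleanest route is to invoke the corollary preceding this statement: if $\mc(\ell^{1})$ were finite then $\ell^{1}$ would be reflexive, which it is not. Alternatively, a direct check works: if $\ell^{1}$ had coroundness $q<\infty$, then restricting to the isometric subspace $\ell^{1}_{2}\subset\ell^{1}$ and choosing $x=(1,0)$, $y=(0,1)$ gives
\[
\|x+y\|^{q}+\|x-y\|^{q}=2^{q+1},\qquad 2^{q-1}(\|x\|^{q}+\|y\|^{q})=2^{q},
\]
contradicting the coroundness inequality. Hence $\mc(\ell^{1})=\infty=\max(1,\infty)$, completing the proof. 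The only real obstacle is the bookkeeping at $p=1$, and it is easily handled by either the reflexivity observation or a two-vector test.
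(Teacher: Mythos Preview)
Your proof is correct and follows the same route the paper indicates, namely duality via Corollary~\ref{Duality2} combined with Corollary~\ref{CL}. The paper does not spell out the argument beyond the phrase ``by means of duality,'' so your explicit treatment of the endpoint $p=1$ (where $(\ell^{1})^{\ast}=\ell^{\infty}$ is not directly covered by Corollary~\ref{CL}) is a welcome addition rather than a departure.
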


\begin{cor}
	Let $1\leq p,q<\infty$. Then $\ell^{p}(\ell^{q})$ has minimal coroundness $\max(p,p' ,q,q' )$.
\end{cor}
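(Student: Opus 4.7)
The plan is to deduce this corollary immediately from the already-computed maximal roundness of $\ell^p(\ell^q)$ (Corollary~\ref{Double Lp Roundness}) combined with the roundness-coroundness duality (Corollary~\ref{Duality2}), after first disposing of the edge cases by hand.

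First I would handle the case where $p=1$ or $q=1$. In either case $\ell^p(\ell^q)$ contains an isometric copy of $\ell^1$, so since $\mc$ is monotone under isometric embedding, it suffices to observe that $\mc(\ell^1)=\infty$. This is immediate from Theorem~\ref{Geometry}(ii), since $\ell^1$ is not reflexive and therefore not uniformly convex. On the other hand, when $p=1$ we have $p'=\infty$, and similarly for $q$, so $\max(p,p',q,q')=\infty$, matching the claimed value.

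For the main case $1<p,q<\infty$, both $\ell^p$ and $\ell^q$ are reflexive and the standard duality for Lebesgue--Bochner spaces gives $\ell^p(\ell^q)^* = \ell^{p'}(\ell^{q'})$. Applying Corollary~\ref{Double Lp Roundness} to the dual,
\[
\mr\bigl(\ell^{p'}(\ell^{q'})\bigr) = \min(p',(p')',q',(q')') = \min(p,p',q,q').
\]
Then by Corollary~\ref{Duality2},
\[
\mc\bigl(\ell^p(\ell^q)\bigr) = \mr\bigl(\ell^p(\ell^q)^*\bigr)' = \min(p,p',q,q')'.
\]

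The final step is the elementary observation that $\min(p,p',q,q')' = \max(p,p',q,q')$. This holds because the set $\{p,p',q,q'\}$ is closed under the conjugate map $t\mapsto t'$, and this map is strictly decreasing on $(1,\infty]$. Hence applying the conjugate map sends the minimum of the set to the maximum of the set, giving the desired equality.

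There is no serious obstacle in this argument: all of the work was already done in establishing Corollary~\ref{Double Lp Roundness} and Corollary~\ref{Duality2}. The only thing requiring any care is verifying the edge cases $p=1$ or $q=1$ separately, since Corollary~\ref{Duality2} is most cleanly applied when the dual is another space of the same type.
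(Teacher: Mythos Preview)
Your proof is correct and follows exactly the route the paper intends: the paper states this corollary without proof, merely remarking that the coroundness of these spaces is ``easy to compute\dots by means of duality,'' and your argument via Corollary~\ref{Duality2} and Corollary~\ref{Double Lp Roundness} is precisely that. Your explicit handling of the edge cases $p=1$ or $q=1$ and the identification $\ell^p(\ell^q)^*=\ell^{p'}(\ell^{q'})$ are details the paper leaves implicit, but there is no difference in approach.
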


\begin{cor}
	Let $1\leq p<\infty$. Then $S^p$ has minimal coroundness $\max(p,p' )$.
\end{cor}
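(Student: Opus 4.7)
The plan is to reduce the statement to results already established in the excerpt, namely the computation $\mr(S_p) = \min(p,p')$ from the previous corollary and the duality relation $\mc(X) = \mr(X^{\ast})'$ from Corollary \ref{Duality2}. The only real input from outside the excerpt is the standard Schatten duality $S_p^{\ast} = S_{p'}$ for $1 < p < \infty$, which I will invoke without proof.

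For $1 < p < \infty$, the argument is a one-liner: combining the two facts above gives
\[
\mc(S_p) = \mr(S_p^{\ast})' = \mr(S_{p'})' = \min(p', (p')')' = \min(p, p')' = \max(p, p'),
\]
where in the last step I use that conjugation swaps $\min$ and $\max$ of a conjugate pair.

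The case $p = 1$ is not covered by this duality argument since $S_1^{\ast}$ is not another Schatten class. Here I would instead appeal to the monotonicity of minimal coroundness under isometric embeddings, noted in the paragraph preceding Corollary \ref{Duality2}: since $\ell^1$ embeds isometrically into $S_1$ (as the diagonal operators with respect to a fixed orthonormal basis), we get $\mc(S_1) \geq \mc(\ell^1)$. By the immediately preceding corollary, $\mc(\ell^1) = \max(1, \infty) = \infty$, so $\mc(S_1) = \infty = \max(1, 1')$, with the trivial upper bound $\mc(S_1) \leq \infty$.

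There is no genuine obstacle here; the whole point of developing the duality theory of roundness and coroundness in Section \ref{Duality and Coroundness} is precisely to make computations like this one immediate once the roundness side has been handled. If anything, the only minor subtlety is remembering that the Schatten duality fails at the endpoints $p = 1, \infty$, which is why the case $p=1$ must be treated separately via an isometric embedding argument rather than by duality.
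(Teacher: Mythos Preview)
Your proof is correct and follows exactly the approach the paper indicates: the paper simply states that the coroundness of these spaces is computed ``by means of duality'' from the roundness results of Section~\ref{Roundness Of Standard Spaces}, and you have filled in precisely that argument via Corollary~\ref{Duality2} and the Schatten duality $S_p^{\ast}=S_{p'}$. Your separate treatment of $p=1$ via the isometric embedding $\ell^1\hookrightarrow S_1$ is also correct and necessary, since the paper gives no explicit proof for this endpoint case.
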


\bigskip

\section{Characterisations of spaces with non-trivial roundness}\label{Nontrivial Roundness}

As stated earlier, a Banach space $X$ has $\mr(X)=2$ if and only if it is a Hilbert space. In this section we will focus on characterising those spaces whose maximal roundness is at the other extreme, that is, spaces for which $\mr(X)=1$. Actually, it turns out that it is easier to write down conditions which are equivalent to having $\mr(X)>1$ instead, and so this is what we will do.

Our starting point is a result of Enflo \cite{E4} which relates the study of roundness to the study of $p$-uniform smoothness, of which we now recall the definition (see for example \cite{KTY}).

\begin{defn}
	Let $X$ be a Banach space and $1\leq p\leq 2$. Then $X$ is said to be $p$-uniformly smooth if there exists $K>0$ such that
		\[ \|x+y\|^{p}+\|x-y\|^{p}\leq2(\|x\|^{p}+\|Ky\|^{p})
		\]
	for all $x,y\in X$.
\end{defn}

By taking $K=1$ above, it is clear that if a Banach space has roundness $p$ then it is also $p$-uniformly smooth. Enflo's result is the following.

\begin{lem}[\text{\cite[Proposition 2]{E4}}]\label{Uniform Smoothness Equiv}
	Let $X$ be a Banach space. Then $\mr(X)>1$ if and only if $X$ is $p$-uniformly smooth, for some $p>1$.
\end{lem}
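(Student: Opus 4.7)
The forward direction is immediate: if $\mr(X) > 1$, pick any $p\in(1,\mr(X))$; then $p \in r(X,\|\cdot\|)$ translates to $\|x+y\|^{p}+\|x-y\|^{p}\leq 2(\|x\|^{p}+\|y\|^{p})$, which is exactly $p$-uniform smoothness with $K=1$.

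For the reverse direction, suppose $X$ is $p$-uniformly smooth with constant $K$ for some $p>1$. We may assume $K\geq 1$: if $K<1$, then $\|x+y\|^{p}+\|x-y\|^{p} \leq 2\|x\|^{p}+2K^{p}\|y\|^{p} \leq 2(\|x\|^{p}+\|y\|^{p})$, giving $p$-roundness directly. Applying the hypothesis with $y$ replaced by $y/K$ yields $\|x+y/K\|^{p}+\|x-y/K\|^{p} \leq 2(\|x\|^{p}+\|y\|^{p})$, so the operator $S_{K}\colon(x,y)\mapsto(x+y/K,\,x-y/K)$ satisfies $\|S_{K}\|_{\ell^{p}_{2}(X)\to\ell^{p}_{2}(X)} \leq 2^{1/p}$. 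Meanwhile, by the triangle inequality and the assumption $K\geq 1$, $\|S_{K}(x,y)\|_{\ell^{1}_{2}(X)} \leq 2\|x\|+2\|y\|/K \leq 2\|(x,y)\|_{\ell^{1}_{2}(X)}$, so $\|S_{K}\|_{\ell^{1}_{2}(X)\to\ell^{1}_{2}(X)} \leq 2$. Riesz--Thorin interpolation, as in the proof of Proposition \ref{Nu Convexity}, then gives $\|S_{K}\|_{\ell^{q}_{2}(X)\to\ell^{q}_{2}(X)} \leq 2^{1/q}$ for every $q\in[1,p]$, which after the substitution $y\mapsto Ky$ reads $\|x+y\|^{q}+\|x-y\|^{q} \leq 2(\|x\|^{q}+K^{q}\|y\|^{q})$. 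In other words, $X$ is $q$-uniformly smooth with the same constant $K$ for every $q\in[1,p]$.

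The decisive---and most delicate---step is to reduce the constant from $K$ to $1$ at some $q>1$, since roundness $q$ corresponds to $q$-uniform smoothness with $K=1$. Following the strategy of Enflo in \cite{E4}, one iterates the $p$-uniform smoothness inequality along Rademacher sums $\sum_{i=1}^{n}\epsilon_{i}x_{i}$ (with $\epsilon_{i}\in\{\pm 1\}$ independent) to derive the Rademacher-type-$p$ bound $\mathbb{E}\|\sum \epsilon_{i}x_{i}\|^{p}\leq K^{p}\sum\|x_{i}\|^{p}$, and then combines this refined information with a compactness analysis of the extremals attaining the supremum defining $\nu_{X}(q)$ to extract a specific $q>1$ at which roundness actually holds. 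The main obstacle lies precisely in this constant-reduction step, since naive interpolation preserves the value $K$ and so yields only $q$-uniform smoothness, not roundness, whenever $K>1$ (which is the generic case).
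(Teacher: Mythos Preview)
The paper does not prove this lemma at all: it is stated with attribution to Enflo \cite[Proposition~2]{E4} and used as a black box. So there is no ``paper's own proof'' to compare against, and your attempt is necessarily going beyond what the paper does.

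That said, your argument has a real gap. The forward direction is fine, and your interpolation step is correct as stated: $p$-uniform smoothness with constant $K\ge 1$ does propagate to $q$-uniform smoothness with the \emph{same} constant $K$ for all $q\in[1,p]$. But, as you yourself note, this buys nothing toward roundness, which requires $K=1$. The entire content of the lemma lies in the constant-reduction step, and your final paragraph does not carry it out. Iterating the smoothness inequality over Rademacher sums yields Rademacher type $p$ with constant depending on $K$, which by Remark~\ref{Type and Cotype} is again only equivalent to roundness when that constant is $1$---so you are back where you started. The phrase ``compactness analysis of the extremals attaining the supremum defining $\nu_{X}(q)$'' is not an argument: in an infinite-dimensional $X$ there is no compactness of $S_X$, the supremum need not be attained, and you give no mechanism by which Rademacher-type information forces $\nu_X(q)=2$ for some $q>1$. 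Enflo's actual proof is not reproduced in this paper, and what you have written does not substitute for it; if you want a self-contained route, the paper's surrounding results (the equivalence of $p$-uniform smoothness with $\rho_X$ of power type $p$, together with Proposition~\ref{Good Proposition} and Lemma~\ref{Final Lemma}) can be assembled into one, but your sketch as it stands is incomplete.
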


Let $X$ be a Banach space, and let $\rho_{X}$ be the modulus of uniform smoothness of $X$. As in \cite{LT}, say that $\rho_{X}$ is of power type $p\geq 1$ if there exists some $C>0$ such that $\rho_{X}(t)\leq Ct^{p}$ for all $t\geq 0$. It is known that for $1\leq p\leq 2$, a Banach space $X$ is $p$-uniformly smooth if and only if $\rho_{X}$ is of power type $p$ (see for example \cite{KTY}). This fact together with Enflo's result gives the following characterisation of when $\mr(X)>1$.

\begin{thm}\label{Characterisation}
Let $X$ be a Banach space. Then $\mr(X) > 1$ if and only if $\rho_{X}$ is of power type $p$, for some $p>1$.
\end{thm}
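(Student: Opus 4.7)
The plan is to chain together the two equivalences that are already at our disposal. The first is Enflo's result recorded as Lemma~\ref{Uniform Smoothness Equiv}: $\mr(X) > 1$ holds if and only if $X$ is $p$-uniformly smooth for some $p>1$. The second is the classical fact (attributed to \cite{KTY} in the preceding paragraph) that for $1 \leq p \leq 2$, a Banach space is $p$-uniformly smooth if and only if its modulus of uniform smoothness $\rho_X$ is of power type $p$, i.e., $\rho_X(t) \leq C t^p$ for some $C>0$ and all $t \geq 0$. Composing these two equivalences immediately yields the stated characterisation, so the proof is essentially a short deduction.

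For the forward direction, I would begin by assuming $\mr(X) > 1$. Then Lemma~\ref{Uniform Smoothness Equiv} produces some exponent $p > 1$ (without loss of generality $p \leq 2$, since $1$-uniform smoothness is automatic and the inequality for exponent $p$ is inherited by all smaller exponents greater than $1$) for which $X$ is $p$-uniformly smooth. Invoking the cited classical equivalence then gives a constant $C > 0$ with $\rho_X(t) \leq C t^p$ for all $t \geq 0$, so $\rho_X$ is of power type $p$.

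For the reverse direction, I would assume there exists $p > 1$ such that $\rho_X$ is of power type $p$. Again I may replace $p$ by $\min(p,2)$, since if $\rho_X(t) \leq C t^p$ for $p > 2$, then on the relevant range of small $t$ we still obtain a power-type bound with exponent $2$ (after adjusting the constant, using the trivial bound $\rho_X(t) \leq t$ coming from the triangle inequality for large $t$, or simply by noting $\rho_X$ is bounded on bounded sets). The classical equivalence then yields that $X$ is $p$-uniformly smooth for some $p \in (1,2]$, and a final application of Lemma~\ref{Uniform Smoothness Equiv} gives $\mr(X) > 1$.

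The only mild subtlety — and the one place where care is needed — is the clean handling of the exponent range in the reverse direction, since the definition of $p$-uniform smoothness in the text is only phrased for $1 \leq p \leq 2$, while the hypothesis allows a priori any $p > 1$. This is a routine adjustment: one restricts to $p \leq 2$ using that a power-type bound with a larger exponent implies a power-type bound with the smaller exponent $2$ on the bounded range where $\rho_X$ lives effectively (and that the conclusion $\mr(X)>1$ is unaffected by replacing $p$ with a smaller value strictly greater than $1$). There is no serious obstacle; the theorem is a direct corollary of the two preceding results.
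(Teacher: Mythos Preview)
Your proposal is correct and matches the paper's own argument exactly: the paper does not even give a formal proof environment for this theorem, but simply states in the sentence preceding it that the result follows by combining Enflo's Lemma~\ref{Uniform Smoothness Equiv} with the cited equivalence between $p$-uniform smoothness and $\rho_X$ being of power type $p$. Your additional care about the exponent range $p>2$ is a reasonable clarification that the paper leaves implicit.
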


\begin{rem}\label{Duality Equiv}
It is possible to use duality to translate the above results to the coroundness setting. For $2\leq q<\infty$, a Banach space $X$ is said to be $q$-uniformly convex if there exists $K>0$ such that
	\[ \|x+y\|^{p}+\|x-y\|^{p}\geq2(\|x\|^{p}+\|Ky\|^{p})
	\]
for all $x,y\in X$ (see for example \cite{KTY}). Letting $\delta_{X}$ denote the modulus of uniform convexity of $X$, we say that $\delta_{X}$ is of power type $q\geq2$ if there exists some $C>0$ such that $\delta_{X}(\epsilon)\geq C\epsilon^{q}$ for all $\epsilon\in[0,2)$. If $q\geq 2$ then $X$ is $q$-uniformly convex if and only if $\delta_{X}$ is of power type $q$. Similarly, $\rho_{X^{\ast}}$ is of power type $p>1$ if and only if $\delta_{X}$ is of power type $p' <\infty$ (see \cite{LT}). Since by Corollary~\ref{Duality2} we know that $\mr(X^{\ast})>1$ if and only if $\mc(X)<\infty$, Theorem~\ref{Characterisation} also gives the following characterisation of those spaces which possess finite coroundness: A Banach space $X$ has $\mc(X)<\infty$ if and only if $X$ is $q$-uniformly convex for some $q\geq 2$ if and only if $\delta_{X}$ is of power type $q$ for some $q\geq 2$.
\end{rem}

\begin{rem}
Theorem \ref{Characterisation} can be regarded as an isometric characterisation of those spaces for which $\mr(X)>1$. However, a classical theorem of Pisier \cite{P} states that a Banach space $X$ admits an equivalent renorming such that $\rho_X$ is of power type $p$, for some $p>1$, if and only if $X$ is super-reflexive. In this way, one obtains the following \textit{isomorphic} characterisation of those spaces for which $\mr(X)>1$: A Banach space $X$ is isomorphic to a Banach space $Y$ with $\mr(Y)>1$ if and only if $X$ is super-reflexive. Similarly of course, we find that a Banach space $X$ is isomorphic to a Banach space $Y$ with $\mc(Y)<\infty$ if and only if $X$ is super-reflexive. In particular, a Banach space $X$ is isomorphic to a Banach space $Y$ with $\mr(Y)>1$ if and only if $X$ is isomorphic to a Banach space $Z$ with $\mc(Z)<\infty$.
\end{rem}

For many of the results to follow, it will be helpful to use the notions of Fr\'{e}chet differentiability and support maps in Banach spaces. For more on these concepts, see \cite{D}.

\begin{defn}
	Let $X$ be a Banach space. We will say that $X$ is uniformly Fr\'{e}chet differentiable if the limits
		\[ \lim_{t\rightarrow 0}\frac{\norm{x+ty}-\norm{x}}{t}
		\]
	exist uniformly for all $x,y\in S_{X}$.
\end{defn}

\begin{defn}
	Let $X$ be a Banach space. A support map on $X$ is a map from $X-\{0\}$ to $X^{\ast}-\{0\}$, say $x\mapsto f_{x}$, such that the following two conditions hold.
	\begin{itemize}
		\item [(i)] For each $x\in S_{X}$, the functional $f_{x}$ has norm one and it norms $x$ in the sense that $f_{x}(x)=\norm{x}=1$.
		\item [(ii)] For each $x\neq 0$ and $\lambda >0$ we have that $f_{\lambda x}=\lambda f_{x}$.
	\end{itemize}
\end{defn}

The relationship between Fr\'{e}chet differentiability and support maps is explained by the following result (for a proof, see Theorem 1 on page 36 of \cite{D}, say).

\begin{thm}
	Let $X$ be a Banach space. If $X$ is uniformly smooth then there is a unique support map on $X$. Furthermore, $X$ is uniformly Fr\'{e}chet differentiable and if $x\mapsto f_{x}$ denotes the unique support map on $X$ then one has that
		\[ \lim_{t\rightarrow 0}\frac{\norm{x+ty}-\norm{x}}{t}=f_{x}(y)
		\]
	for all $x,y\in S_{X}$.
\end{thm}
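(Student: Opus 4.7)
The plan is to build the support map $x\mapsto f_x$ directly out of the one-sided derivatives of the norm at $x\in S_X$, to use the modulus of uniform smoothness to force those one-sided derivatives to agree uniformly, and to use the subdifferential inequality to get uniqueness. The whole argument is driven by a single estimate coming from $\rho_X$.

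First, I would fix $x,y\in S_X$ and consider the convex function $h(t)=\norm{x+ty}$ on $\mathbb{R}$, which satisfies $h(0)=1$. Convexity gives existence of the one-sided derivatives $h'_{\pm}(0)$, the inequality $h'_-(0)\le h'_+(0)$, and monotonicity of the difference quotients $(h(t)-1)/t$ on each side of $0$. The definition of $\rho_X$ applied to the pair $(x,ty)$ with $\norm{x}=\norm{y}=1$ yields, for every $t>0$,
\[
\frac{\norm{x+ty}-1}{t}-\frac{1-\norm{x-ty}}{t}\;\le\;\frac{2\rho_X(t)}{t}.
\]
Since $X$ is uniformly smooth, Theorem~\ref{Omnibus}(i) gives $\rho_X(t)/t\to 0$ as $t\to 0^+$, so letting $t\to 0^+$ forces $h'_+(0)=h'_-(0)$. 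I would then define $f_x(y)$ to be this common value.

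Next I would verify that $y\mapsto f_x(y)$, extended from $S_X$ to all of $X$ by positive homogeneity, is a norm-one linear functional norming $x$. The bound $|\norm{x+ty}-1|\le t\norm{y}$ gives $|f_x(y)|\le\norm{y}$, and $f_x(x)=\lim_{t\to 0}((1+t)-1)/t=1$. Positive homogeneity in $y$ is built into the definition, and linearity follows from the standard convex-analysis fact that a one-sided directional derivative of a convex function is sublinear: combining sublinearity with the two-sided equality $h'_+(0)=h'_-(0)$ applied to both $y$ and $-y$ upgrades sublinearity to additivity. Extending by $f_{\lambda x}=\lambda f_x$ for $\lambda>0$ gives a support map on $X\setminus\{0\}$. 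For uniqueness, if $g_x\in S_{X^*}$ is any norming functional at $x$, then $1+tg_x(y)=g_x(x+ty)\le\norm{x+ty}$ for $t>0$ gives $g_x(y)\le(\norm{x+ty}-1)/t\to f_x(y)$, and applying this with $-y$ in place of $y$ yields the reverse inequality, so $g_x=f_x$.

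Finally, uniform Fréchet differentiability would be read off from the estimates already in hand. The two difference quotients $(\norm{x+ty}-1)/t$ and $(1-\norm{x-ty})/t$ are monotone in $t$ and squeeze $f_x(y)$ from above and below, while their gap is bounded by $2\rho_X(t)/t$, a quantity depending on $t$ alone. Hence $(\norm{x+ty}-\norm{x})/t\to f_x(y)$ uniformly over $(x,y)\in S_X\times S_X$, which is exactly the uniform Fréchet differentiability required.

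The main obstacle is the linearity step: promoting the positively homogeneous, a priori only directional, quantity $f_x(y)$ to an honest bounded linear functional. Everything else reduces to transparent bookkeeping on $\rho_X$, but the linearity really does need both the sublinearity that Gâteaux derivatives of convex functions enjoy and the two-sidedness that uniform smoothness provides; without the modulus-of-smoothness estimate forcing $h'_+(0)=h'_-(0)$, one would at best get a sublinear Minkowski-style functional rather than a genuine element of $X^*$.
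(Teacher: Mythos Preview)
Your proof is correct. The paper does not supply its own proof of this theorem; it simply cites Theorem~1 on page~36 of Diestel's \textit{Geometry of Banach Spaces---Selected Topics} \cite{D}. Your argument---building $f_x$ from the one-sided derivatives of the convex function $t\mapsto\norm{x+ty}$, using $\rho_X(t)/t\to 0$ to force $h'_+(0)=h'_-(0)$, upgrading sublinearity to linearity via $f_x(-y)=-f_x(y)$, deducing uniqueness from the subdifferential inequality, and reading off uniformity of the limit from the $x,y$-independent bound $2\rho_X(t)/t$ on the gap between the upper and lower difference quotients---is the classical proof, and is essentially what one finds in Diestel.
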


Theorem \ref{Characterisation} allows us to say the following about uniform Frechet differentiability and non-trivial roundness.

\begin{prop}\label{Good Proposition}
Let $X$ be a Banach space and let $x\mapsto f_{x}$ be a support map on $X$. Suppose that there exists some $p>1$ such that
\[ \norm{x+ty}=1+f_{x}(y)t+O(t^{p})
\]
as $t\rightarrow 0$, where this $O(t^{p})$ holds uniformly over all $x,y\in S_{X}$. Then $\rho_{X}$ is of power type $p>1$ and consequently $\mr(X)>1$.
\end{prop}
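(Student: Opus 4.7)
The plan is to estimate $\rho_X(t)$ directly from the given expansion, exploiting the linearity of $f_x$ to cancel the first-order term when one pairs $z$ with $-z$.

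First, I would rewrite the modulus of smoothness in a form adapted to the hypothesis. By positive homogeneity of the norm, setting $y = tz$ with $z \in S_X$,
\[
\rho_X(t) = \sup\left\{\tfrac{1}{2}\bigl(\norm{x+tz} + \norm{x-tz}\bigr) - 1 \st x,z \in S_X\right\}.
\]
The assumed expansion applies to any $y \in S_X$, so in particular it applies with $y = z$ and with $y = -z$. Since $f_x \in X^*$ is a linear functional, $f_x(-z) = -f_x(z)$, and thus
\begin{align*}
\norm{x+tz} &= 1 + f_x(z)\,t + O(t^p), \\
\norm{x-tz} &= 1 - f_x(z)\,t + O(t^p),
\end{align*}
with both $O(t^p)$ terms uniform in $x,z \in S_X$. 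Adding, the first-order terms cancel and I obtain
\[
\tfrac{1}{2}\bigl(\norm{x+tz}+\norm{x-tz}\bigr) - 1 = O(t^p)
\]
uniformly in $x,z \in S_X$. Taking the supremum, this gives $\rho_X(t) \le C_1 t^p$ for some $C_1 > 0$ and all $t$ in some neighborhood $[0,\eps)$ of zero.

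To promote this local estimate to the global bound required by the definition of power type, I would use the elementary inequality $\rho_X(t) \le t$, which is immediate from the triangle inequality. For $t \ge \eps$ this gives $\rho_X(t) \le t \le \eps^{1-p} t^p$, so the constant $C = \max(C_1, \eps^{1-p})$ yields $\rho_X(t) \le C t^p$ for all $t \ge 0$. Hence $\rho_X$ is of power type $p$, and Theorem~\ref{Characterisation} then immediately gives $\mr(X) > 1$.

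The argument is essentially a one-line cancellation, so there is no serious obstacle; the only point requiring any care is recognising that the hypothesis must be applied to both $z$ and $-z$, and that the uniformity of the remainder in $x,y \in S_X$ is precisely what permits passage to the supremum in the definition of $\rho_X$. The extension from a neighborhood of $0$ to all of $[0,\infty)$ is a standard trick using the trivial bound $\rho_X(t) \le t$.
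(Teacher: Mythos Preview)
Your proof is correct and follows essentially the same approach as the paper: apply the expansion to both $\pm tz$, add so that the linear terms cancel via $f_x(-z) = -f_x(z)$, and take the supremum to bound $\rho_X(t)$. Your treatment is in fact slightly tidier, since you explicitly extend the local bound $\rho_X(t) \le C_1 t^p$ to all $t \ge 0$ via the trivial estimate $\rho_X(t) \le t$, a step the paper leaves implicit.
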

\begin{proof}
	By assumption, there exists $C>0$ and $0<t_{0}\leq 1$ such that if $x,y\in S_{X}$ and $-t_{0}\leq t\leq t_{0}$ then
		\[ \big|\|x+ty\|-(1+f_{x}(y)t)\big|\leq Ct^{p}.
		\]
	Hence, for such $x,y,t$ one has that
		\begin{align*}
			\|x+ty\|&=\big|\|x+ty\|\big|
			\\&=\big|\|x+ty\|-(1+f_{x}(y)t)+(1+f_{x}(y)t)\big|
			\\&\leq\big|\|x+ty\|-(1+f_{x}(y)t)\big|+|1+f_{x}(y)t|
			\\&\leq Ct^{p}+|1+f_{x}(y)t|.
		\end{align*}
	Also, note that since $\|f_{x}\|=1$ one has that $|f_{x}(y)t|\leq\|f_{x}\|\|y\||t|\leq 1$. Hence
		\[ \|x+ty\|\leq Ct^{p}+|1+f_{x}(y)t|=Ct^{p}+1+f_{x}(y)t.
		\]
	Similarly, one can show that
		\[ \|x-ty\|\leq Ct^{p}+1+f_{x}(-y)t=Ct^{p}+1-f_{x}(y)t.
		\]
	Thus, if $x,y\in S_{X}$ and $0\leq t\leq t_{0}$ one has that
		\[ \frac{1}{2}(\|x+ty\|+\|x-ty\|)-1\leq\frac{1}{2}(Ct^{p}+1+f_{x}(y)t+Ct^{p}+1-f_{x}(y)t)-1=Ct^{p}.
		\]
	Hence $\rho_{X}(t)\leq Ct^{p}$, for all $0\leq t\leq t_{0}$, which shows that $\rho_{X}$ is of power type $p$. The fact that $\mr(X)>1$ now follows from Theorem \ref{Characterisation}.
\end{proof}

Our next aim is to provide a necessary condition for $\mr(X)$ to be greater than $1$. We require a number of preliminary lemmas.

\begin{lem}[see {\cite[Lemma 1.e.8]{LT}}]\label{Inverse}
If $X$ is uniformly convex, then $\delta_{X}:[0,2]\rightarrow[0,1]$ is strictly increasing and continuous with $\delta_{X}(0)=0$ and $\delta_{X}(2)=1$. Consequently $\delta_{X}$ has an inverse function $\delta_{X}^{-1}:[0,1]\rightarrow[0,2]$.
\end{lem}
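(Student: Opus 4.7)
The plan is to verify the four listed properties of $\delta_X$ in sequence, and then deduce the existence of $\delta_X^{-1}$ as a formal consequence. First, the endpoint values: $\delta_X(0)=0$ comes from choosing $y=x\in S_X$, which gives $\norm{x-y}=0\geq 0$ and $1-\norm{(x+y)/2}=0$, while the infimum is always non-negative. For $\delta_X(2)=1$, I exploit that uniform convexity implies strict convexity: if $x,y\in S_X$ satisfy $\norm{x-y}=2$, setting $y'=-y\in S_X$ gives $\norm{x+y'}=2$, and strict convexity then forces $x=y'=-y$, whence $\norm{x+y}=0$ and $\delta_X(2)\geq 1$; the reverse inequality is trivial.

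For monotonicity, the non-strict version is immediate since the admissible set of pairs shrinks as $\epsilon$ grows. For strict monotonicity, the key input is Figiel's inequality, which states that $\epsilon\mapsto\delta_X(\epsilon)/\epsilon$ is non-decreasing on $(0,2]$. Granting this, for $0<\epsilon_1<\epsilon_2\leq 2$,
\[ \delta_X(\epsilon_1)\leq\tfrac{\epsilon_1}{\epsilon_2}\delta_X(\epsilon_2)<\delta_X(\epsilon_2), \]
the strict inequality using $\delta_X(\epsilon_2)>0$ from uniform convexity. The remaining case $\epsilon_1=0<\epsilon_2$ is covered directly by $\delta_X(0)=0$.

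Continuity is the main obstacle. I would introduce the auxiliary modulus
\[ \tilde\delta_X(\epsilon)=\inf\left\{1-\Bigl\|\tfrac{x+y}{2}\Bigr\|\st\norm{x}\leq 1,\,\norm{y}\leq 1,\,\norm{x-y}\geq\epsilon\right\} \]
taken over the closed unit ball rather than the sphere. A theorem of Figiel asserts that $\tilde\delta_X=\delta_X$ and that $\tilde\delta_X$ is convex on $[0,2]$. Convexity immediately yields continuity on the open interval $(0,2)$. Right-continuity at $0$ follows from the Figiel bound $\delta_X(\epsilon)\leq(\epsilon/2)\delta_X(2)=\epsilon/2$, and left-continuity at $2$ is obtained via monotonicity together with a direct approximation argument relying on the strict-convexity step already employed for $\delta_X(2)=1$.

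Finally, once $\delta_X:[0,2]\to[0,1]$ is established as continuous and strictly increasing with $\delta_X(0)=0$ and $\delta_X(2)=1$, the Intermediate Value Theorem immediately produces a continuous inverse $\delta_X^{-1}:[0,1]\to[0,2]$. The genuine difficulty of this lemma lies in Figiel's inequality and in the identification $\tilde\delta_X=\delta_X$; these are non-trivial facts about Banach space geometry, and the continuity of $\delta_X$ on the interior of $[0,2]$ rests squarely on them. Once they are granted, the other items are routine verifications.
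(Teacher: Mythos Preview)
The paper does not prove this lemma at all: it simply records the statement with a citation to Lindenstrauss--Tzafriri \cite[Lemma 1.e.8]{LT} and moves on. So there is no ``paper's proof'' to compare against; your proposal supplies what the paper deliberately outsources.

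Your sketch is essentially the standard argument and is correct in outline. The endpoint computations are fine, and your route to strict monotonicity via the monotonicity of $\epsilon\mapsto\delta_X(\epsilon)/\epsilon$ together with $\delta_X(\epsilon)>0$ for $\epsilon>0$ is exactly the usual one. Likewise, invoking the ball-modulus $\tilde\delta_X$, its coincidence with $\delta_X$, and its convexity to obtain continuity on $(0,2)$ is the standard line (these facts are precisely what is proved in \cite{LT} and in Figiel's paper \cite{F1}). The only place where your write-up is a little thin is left-continuity at $2$: ``a direct approximation argument relying on the strict-convexity step'' is vague. One clean way to close this is to note that for $x,y\in S_X$ with $\norm{x-y}\geq\epsilon$, setting $u=(x-y)/\norm{x-y}$ and $v=-u$ gives unit vectors with $\norm{u-v}=2$ and $\norm{(x+y)/2}\leq 1-\norm{x-y}/2+\norm{x-y}\cdot\norm{(u+v)/2}/2$-type estimates, or more directly to apply uniform convexity to the pair $x,-y$ (which satisfy $\norm{x-(-y)}=\norm{x+y}$ small when $\norm{x-y}$ is near $2$) to force $\norm{(x+y)/2}\to 0$. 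Either way the gap is minor and routine to fill.

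In short: your proposal is correct and more detailed than what the paper offers, since the paper offers nothing beyond a reference.
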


\begin{lem}\label{Inverse Bound}
Suppose that $\rho_{X}$ is of power type $p$, for some $p>1$. Then there exists $M>0$ such that
	\[ \delta_{X^{\ast}}^{-1}(\epsilon)\leq M\epsilon^{1/p' }
	\]
for all $0\leq\epsilon\leq1$.
\end{lem}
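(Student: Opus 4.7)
The plan is to invoke the Lindenstrauss duality formula relating the modulus of smoothness of $X$ and the modulus of convexity of $X^{\ast}$ (see \cite{LT}, Proposition 1.e.2), namely
\[ \rho_{X}(\tau) = \sup_{0 \leq \varepsilon \leq 2}\left\{\frac{\tau\varepsilon}{2} - \delta_{X^{\ast}}(\varepsilon)\right\}. \]
Before using it, I would first note that the hypothesis $\rho_{X}(t)\leq C t^{p}$ with $p>1$ implies by Theorem~\ref{Characterisation} that $\mr(X)>1$, hence by Theorem~\ref{Geometry}(i) that $X$ is uniformly smooth, and then by Theorem~\ref{Omnibus}(iv) that $X^{\ast}$ is uniformly convex. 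Lemma~\ref{Inverse} then guarantees that $\delta_{X^{\ast}}^{-1}:[0,1]\to[0,2]$ is well defined, strictly increasing and continuous, so the inequality to be proved is at least meaningful.

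Next, rearranging the Lindenstrauss formula and using the power-type bound on $\rho_{X}$ gives, for every $\varepsilon\in[0,2]$ and every $\tau\geq0$,
\[ \delta_{X^{\ast}}(\varepsilon)\geq \frac{\tau\varepsilon}{2}-\rho_{X}(\tau)\geq \frac{\tau\varepsilon}{2}-C\tau^{p}. \]
Optimizing the right-hand side over $\tau\geq 0$ is an elementary calculus exercise: the maximum occurs at $\tau_{\ast}=(\varepsilon/(2Cp))^{1/(p-1)}$, and substituting back yields a bound of the form
\[ \delta_{X^{\ast}}(\varepsilon)\geq c\,\varepsilon^{p'}, \]
for an explicit constant $c>0$ depending only on $C$ and $p$, valid for all $\varepsilon\in[0,2]$; here I use that $1+1/(p-1)=p/(p-1)=p'$.

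Finally, this lower bound on $\delta_{X^{\ast}}$ inverts to give the claimed upper bound on $\delta_{X^{\ast}}^{-1}$: if $\varepsilon\in[0,1]$ and $s=\delta_{X^{\ast}}^{-1}(\varepsilon)\in[0,2]$, then $\varepsilon=\delta_{X^{\ast}}(s)\geq c s^{p'}$, so $s\leq (\varepsilon/c)^{1/p'}=M\varepsilon^{1/p'}$ with $M=c^{-1/p'}$. The only real subtlety I anticipate is correctly citing the Lindenstrauss duality formula in the exact scaling used here; once that is in hand, the derivation of the power-type lower bound on $\delta_{X^{\ast}}$ and its inversion are routine, and the optimizer $\tau_{\ast}$ causes no problems because the formula is valid for every $\tau\geq 0$.
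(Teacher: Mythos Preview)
Your proof is correct and follows essentially the same route as the paper's: establish that $\delta_{X^{\ast}}$ is of power type $p'$, then invert. The only difference is that the paper simply cites from \cite{LT} the fact that $\rho_{X}$ of power type $p$ implies $\delta_{X^{\ast}}$ of power type $p'$, whereas you derive this explicitly from the Lindenstrauss duality formula via the optimisation in $\tau$; the inversion step at the end is identical.
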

\begin{proof}
By duality (see \cite{LT}), we have that $\delta_{X^{\ast}}$ is of power type $p'$. That is, there exists $M_{0}>0$ such that $\delta_{X^{\ast}}(\epsilon)\geq M_{0}\epsilon^{p'}$ for all $0\leq\epsilon\leq2$. By Theorem~\ref{Omnibus}(ii) this implies that $X^{\ast}$ is uniformly convex and hence by Lemma \ref{Inverse} we have that the inverse function $\delta_{X^{\ast}}^{-1}:[0,1]\rightarrow[0,2]$ exists. Putting this all together we find that for all $0\leq\epsilon\leq 1$ we have that
	\[ \epsilon=\delta_{X^{\ast}}(\delta_{X^{\ast}}^{-1}(\epsilon))\geq M_{0}\,\delta_{X^{\ast}}^{-1}(\epsilon)^{p' }.
	\]
Rearranging we find that
	\[ \delta_{X^{\ast}}^{-1}(\epsilon)\leq M_{0}^{-1/p'}\epsilon^{1/p'}
	\]
for all $0\leq\epsilon\leq 1$ and so the result follows by taking $M=M_{0}^{-1/p'}$.
\end{proof}


\begin{lem}[see {\cite[Theorem 1, page 36]{D}}]\label{First Support Lemma}
Suppose that $X^{\ast}$ is uniformly convex. Then any support map $x\mapsto f_{x}$ on $X$ is uniformly continuous on $S_{X}$. More precisely, for any $0<\epsilon\leq 2$ and $x,y\in S_{X}$, if $\norm{x-y}<2\delta_{X^{\ast}}(\epsilon)$ then $\norm{f_{x}-f_{y}}<\epsilon$.
\end{lem}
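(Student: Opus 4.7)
The plan is to prove the contrapositive: assume $x, y \in S_X$ with $\norm{f_x - f_y} \geq \epsilon$, and deduce $\norm{x - y} \geq 2\delta_{X^*}(\epsilon)$. The strategy is to bound $\norm{(f_x + f_y)/2}$ both above, using uniform convexity of $X^*$, and below, using the support-map conditions together with the proximity of $x$ and $y$.

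First I would apply the definition of the modulus of uniform convexity directly in the dual space. Since $f_x, f_y \in S_{X^*}$ with $\norm{f_x - f_y} \geq \epsilon$, the very definition of $\delta_{X^*}$ gives the upper bound
\[
\left\Vert \frac{f_x + f_y}{2} \right\Vert \leq 1 - \delta_{X^*}(\epsilon).
\]
Next I would produce a matching lower bound by evaluating $f_x + f_y$ at the unit vector $x$. Using $f_x(x) = 1$ together with $f_y(x) = f_y(y) + f_y(x - y) = 1 + f_y(x-y)$ and the bound $|f_y(x - y)| \leq \norm{x - y}$, I get
\[
\norm{f_x + f_y} \geq (f_x + f_y)(x) \geq 2 - \norm{x - y},
\]
so dividing by $2$ yields $\norm{(f_x + f_y)/2} \geq 1 - \norm{x - y}/2$. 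Comparing the two bounds gives $\norm{x - y}/2 \geq \delta_{X^*}(\epsilon)$, which is exactly the contrapositive of the claim. Uniform continuity of the support map on $S_X$ then follows because $\delta_{X^*}(\epsilon) > 0$ for all $\epsilon \in (0, 2]$ by Theorem~\ref{Omnibus}(ii).

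I do not expect any serious obstacle here: the entire proof is a clean two-sided estimate on $\norm{(f_x + f_y)/2}$, and the only subtlety is remembering that the correct side on which to apply uniform convexity is the dual $X^*$ (where $f_x$ and $f_y$ live), rather than $X$. One minor point worth verifying is that the lower bound argument does not require $x \neq -y$ or similar genericity; this is automatic because the inequality $\norm{f_x + f_y} \geq 2 - \norm{x - y}$ is trivial when $\norm{x - y} \geq 2$, and meaningful otherwise.
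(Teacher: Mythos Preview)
Your argument is correct and is precisely the standard proof of this fact. The paper does not supply its own proof of this lemma; it simply cites Diestel's book, and the argument there is essentially the one you wrote: bound $\norm{(f_x+f_y)/2}$ from above by $1-\delta_{X^*}(\epsilon)$ using uniform convexity of $X^*$, bound it from below by $1-\norm{x-y}/2$ by evaluating at $x$, and compare.
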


\begin{lem}\label{Final Lemma}
Let $X$ be a Banach space, and let $x\mapsto f_{x}$ be any support map on $X$. Suppose that $\rho_{X}$ is of power type $p$, for some $p>1$. Then
	\[ \norm{x+ty}=1+f_{x}(y)t+O(t^{1+1/p' })
	\]
as $t\rightarrow 0$, where this $O(t^{1+1/p' })$ holds uniformly over $x,y\in S_{X}$.
\end{lem}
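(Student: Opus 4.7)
The plan is to write $\|x+ty\|-1-tf_x(y)$ as the integral of $(f_{u_s}-f_x)(y)$ along the path $s\mapsto x+sy$, and then control this integrand using the H\"older continuity of the support map on the sphere, which in turn comes from combining Lemmas \ref{Inverse}, \ref{Inverse Bound} and \ref{First Support Lemma}. The key identification is that once the norm is uniformly Fr\'echet differentiable, the one-variable function $\phi(s):=\|x+sy\|$ is $C^1$ with derivative $\phi'(s)=f_{u_s}(y)$, where $u_s:=(x+sy)/\|x+sy\|\in S_X$; the fundamental theorem of calculus then gives
\[
\|x+ty\|-1-tf_x(y)=\int_{0}^{t}\bigl(f_{u_s}-f_x\bigr)(y)\,ds,
\]
so the whole estimate reduces to bounding $\|f_{u_s}-f_x\|$ by a power of $|s|$.

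To justify the derivative formula and the H\"older bound, I would first note that the power-type hypothesis on $\rho_X$ forces $\lim_{t\to 0^+}\rho_X(t)/t=0$, so by Theorem~\ref{Omnibus}(i) the space $X$ is uniformly smooth; by the theorem immediately preceding Proposition~\ref{Good Proposition} the support map is unique and the norm is uniformly Fr\'echet differentiable, with derivative at $w\neq 0$ in direction $y$ equal to $f_{w/\|w\|}(y)$, whence $\phi'(s)=f_{u_s}(y)$ and $\phi\in C^{1}$. Next, Remark~\ref{Duality Equiv} says $\delta_{X^*}$ is of power type $p'$, so $X^*$ is uniformly convex by Theorem~\ref{Omnibus}(ii), and Lemmas~\ref{Inverse} and~\ref{First Support Lemma} combine to give $\|f_u-f_v\|\leq \delta_{X^*}^{-1}(\|u-v\|/2)$ for $u,v\in S_X$ with $\|u-v\|\le 2$. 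Applying Lemma~\ref{Inverse Bound} to the inverse of $\delta_{X^*}$, this becomes
\[
\|f_u-f_v\|\le M'\|u-v\|^{1/p'}
\]
with $M'=M/2^{1/p'}$, uniformly in $u,v\in S_X$.

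It then remains to estimate $\|u_s-x\|$ in terms of $|s|$. A direct triangle-inequality computation, using $1-|s|\le\|x+sy\|\le 1+|s|$, gives $\|u_s-x\|\le C_0|s|$ for all $x,y\in S_X$ and all $s$ with $|s|\le 1/2$ (for some absolute $C_0$). Composing with the H\"older bound, $\|f_{u_s}-f_x\|\le M'C_0^{1/p'}|s|^{1/p'}$, and integrating over $[0,|t|]$ yields
\[
\bigl|\|x+ty\|-1-tf_x(y)\bigr|\le \int_{0}^{|t|}\|f_{u_{\pm s}}-f_x\|\,ds\le \frac{M'C_0^{1/p'}}{1+1/p'}|t|^{1+1/p'},
\]
which is the claimed uniform $O(t^{1+1/p'})$ bound; the case $t<0$ is handled either by the absolute value in this display or, equivalently, by replacing $y$ with $-y$ and using linearity of $f_x$.

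The main obstacle is really just assembling the chain correctly: converting the implicitly stated continuity modulus of Lemma~\ref{First Support Lemma} into the explicit H\"older estimate via the inverse of $\delta_{X^*}$, and making sure all estimates are uniform in $x,y\in S_X$ (which is automatic once the constants $M$, $M'$, $C_0$ depend only on $p$ and on the power-type constant of $\rho_X$). Everything else is routine FTC and one-variable calculus.
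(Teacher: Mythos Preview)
Your proof is correct and uses the same essential ingredients as the paper's argument: the H\"older continuity of the support map on $S_X$, obtained by combining Lemmas~\ref{Inverse}, \ref{Inverse Bound} and \ref{First Support Lemma}, together with the elementary estimate $\|u_s-x\|\le C_0|s|$.

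The only real difference is how the error $\|x+ty\|-1-tf_x(y)$ is expressed. You write it as an integral $\int_0^t (f_{u_s}-f_x)(y)\,ds$ via the fundamental theorem of calculus, which requires first verifying that $\phi(s)=\|x+sy\|$ is $C^1$. The paper instead invokes a pointwise sandwich inequality from \cite{D},
\[
f_x(y)\le \frac{\|x+ty\|-1}{t}\le \frac{f_{x+ty}(y)}{\|x+ty\|}=f_{u_t}(y),
\]
which immediately bounds the difference quotient by $\|f_{u_t}-f_x\|$ without any differentiability or integration. The paper's route is thus slightly more elementary and avoids the $C^1$ step; your integral representation is perhaps more transparent and even yields a marginally sharper constant (picking up the factor $(1+1/p')^{-1}$ from integrating $s^{1/p'}$). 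Either way the exponent and the uniformity are identical.
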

\begin{proof}
It is proved on page $21$ of \cite{D} that for $t\in(0,1/2)$ and $x,y\in S_{X}$, one has that
\[ f_{x}(y)\leq\frac{\norm{x+ty}-\norm{x}}{t}\leq\frac{f_{x+ty}(y)}{\norm{x+ty}}
\]
and hence that
\[ \bigg| \frac{\norm{x+ty}-\norm{x}}{t}-f_{x}(y)\bigg|\leq\bigg|\frac{f_{x+ty}(y)}{\norm{x+ty}}-f_{x}(y)\bigg|.
\]
In fact, by setting $t'=-t$ and $y'=-y$ it follows that the above inequality actually holds for all $t\in(-1/2,1/2)-\{0\}$ and $x,y\in S_{X}$. Hence, for all $t\in(-1/2,1/2)-\{0\}$ and $x,y\in S_{X}$, the basic properties of the support map imply that
	\[ \bigg| \frac{\norm{x+ty}-\norm{x}}{t}-f_{x}(y)\bigg|\leq\bigg|\frac{f_{x+ty}(y)}{\norm{x+ty}}-f_{x}(y)\bigg|=|(f_{x+ty/\|x+ty\|}-f_{x})(y)|\leq\|f_{x+ty/\|x+ty\|}-f_{x}\|.
	\]
Now, for $t\in(-1/2,1/2)-\{0\}$ and $x,y\in S_{X}$ we note that the triangle inequality implies that
\begin{align*}
\bigg\|\frac{x+ty}{\|x+ty\|}-x\bigg\|&\leq\bigg\|\frac{x+ty}{\|x+ty\|}-(x+ty)\bigg\|+\|(x+ty)-x\|
\\&=\|x+ty\|\bigg|\frac{1}{\norm{x+ty}}-1\bigg|+|t|
\\&\leq(1+|t|)\bigg|\frac{1}{\norm{x+ty}}-1\bigg|+|t|
\\&\leq\frac{3}{2}\bigg|\frac{1}{\norm{x+ty}}-1\bigg|+|t|.
\end{align*}
We claim that if $t\in(-1/2,1/2)-\{0\}$ and $x,y\in S_{X}$ then
\[ \bigg|\frac{1}{\norm{x+ty}}-1\bigg|\leq2|t|.
\]
In this case the triangle inequality clearly implies that
\[ 0<1-|t|\leq \|x+ty\|\leq 1+|t|.
\]
Then, since the function $h(\alpha)=1/\alpha-1$ is decreasing for $\alpha>0$ this implies that
\[ \bigg|\frac{1}{\norm{x+ty}}-1\bigg|\leq\max(|h(1-|t|)|,|h(1+|t|)|)=\max\bigg(\frac{|t|}{1-|t|},\frac{|t|}{1+|t|}\bigg).
\]
Since $0\leq |t|\leq 1/2$ one finds that
\[ \frac{|t|}{1-|t|}\leq\frac{|t|}{1-1/2}=2|t|
\]
and
\[ \frac{|t|}{1+|t|}\leq\frac{|t|}{1+0}=|t|.
\]
Putting this all together gives that if $t\in(-1/2,1/2)-\{0\}$ and $x,y\in S_{X}$ then
\[ \bigg|\frac{1}{\norm{x+ty}}-1\bigg|\leq\max(2|t|,|t|)=2|t|
\]
which proves the claim. By what was proved earlier, this means that if $t\in(-1/2,1/2)-\{0\}$ and $x,y\in S_{X}$ then
	\[ \bigg\|\frac{x+ty}{\|x+ty\|}-x\bigg\|\leq\frac{3}{2}\times 2|t|+|t|=4|t|.
	\]
Now, by Lemma \ref{Inverse Bound} we have that $\delta_{X^{\ast}}^{-1}:[0,1]\rightarrow[0,2]$ exists and that there exists some $M>0$ such that
	\[ \delta_{X^{\ast}}^{-1}(\epsilon)\leq M\epsilon^{1/p' }
	\]
for all $0\leq\epsilon\leq1$. Hence if we now further assume that $t\in(-1/4,1/4)-\{0\}$ and $x,y\in S_{X}$ then $0\leq 4|t|\leq 1$ and so
	\[ \bigg\|\frac{x+ty}{\|x+ty\|}-x\bigg\|\leq 4|t|=\delta_{X^{\ast}}(\delta_{X^{\ast}}^{-1}(4|t|))<2\delta_{X^{\ast}}(\delta_{X^{\ast}}^{-1}(4|t|))
	\]
and so Lemma \ref{First Support Lemma} implies that (since both $(x+ty)/\|x+ty\|$ and $x$ are in $S_{X}$)
	\[ \|f_{x+ty/\|x+ty\|}-f_{x}\|\leq\delta^{-1}_{X^{\ast}}(4|t|)\leq M(4|t|)^{1/p'}=(4^{1/p'}M)|t|^{1/p'}.
	\]
Thus, if $t\in(-1/4,1/4)-\{0\}$ and $x,y\in S_{X}$ then
	\[ \bigg| \frac{\norm{x+ty}-\norm{x}}{t}-f_{x}(y)\bigg|\leq(4^{1/p'}M)|t|^{1/p'}
	\]
which after multiplying both sides by $|t|$ reads as (since $\|x\|=1$)
	\[ \big|\|x+ty\|-1-f_{x}(y)t\big|\leq(4^{1/p'}M)|t|^{1+1/p'}.
	\]
Since the constant $M$ does not depend on the choice of $x,y\in S_{X}$ this proves the result.
\end{proof}

We now have the following characterisation of when $\rho_{X}$ is of power type $p$, for some $p>1$.

\begin{thm}\label{Split}
	Let $X$ be a Banach space and let $x\mapsto f_{x}$ be a support map on $X$. Then the following are equivalent.
	\begin{itemize}
		\item[(i)] $\rho_{X}$ is of power type $p$, for some $p>1$.
		\item[(ii)] There exists some $r>1$ such that
		\[ \norm{x+ty}=1+f_{x}(y)t+O(t^{r})
		\]
		as $t\rightarrow 0$, where this $O(t^{r})$ holds uniformly over all $x,y\in S_{X}$.
	\end{itemize}
\end{thm}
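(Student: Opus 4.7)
The plan is to observe that this equivalence is essentially a repackaging of the two main technical results already developed in this section, namely Proposition \ref{Good Proposition} and Lemma \ref{Final Lemma}. Thus the proof will consist of invoking each result for the appropriate direction and verifying that the resulting exponents lie in the required range.

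For the direction (ii) $\Rightarrow$ (i), the plan is to apply Proposition \ref{Good Proposition} verbatim, taking the exponent $p$ in that proposition to be the given $r > 1$ from (ii). That proposition already establishes that such an expansion forces $\rho_X$ to be of power type $r$, which is exactly the conclusion of (i) with the same exponent. So this direction requires no new work at all.

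For the direction (i) $\Rightarrow$ (ii), the plan is to apply Lemma \ref{Final Lemma} directly. Assuming $\rho_X$ is of power type $p$ for some $p > 1$, that lemma yields the uniform expansion
\[
\|x+ty\| = 1 + f_x(y)t + O\bigl(t^{1+1/p'}\bigr)
\]
as $t \to 0$, holding uniformly in $x,y \in S_X$. The only thing left to check is that setting $r := 1 + 1/p'$ gives an exponent strictly greater than $1$; but since $p > 1$ forces $p' < \infty$, we have $1/p' > 0$ and hence $r > 1$, as required by (ii).

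There is no real obstacle in assembling the final statement, since the genuinely delicate analytic work has already been absorbed into Lemma \ref{Final Lemma}, where uniform continuity of the support map on $S_X$ (via Lemma \ref{First Support Lemma}) combined with the power-type control on $\delta_{X^*}^{-1}$ from Lemma \ref{Inverse Bound} produced the $O(t^{1+1/p'})$ remainder. One small conceptual remark worth including is that either condition implies, via Theorem \ref{Characterisation} and Theorem \ref{Geometry}(i), that $X$ is uniformly smooth, so the support map $x \mapsto f_x$ is automatically unique; hence the statement of the theorem is independent of which support map is chosen. It is also worth noting that the exponents are not preserved by the two implications: starting from (i) with exponent $p$ one obtains (ii) with exponent $1+1/p' \le p$, which is why both statements are phrased as existence assertions rather than with a common exponent.
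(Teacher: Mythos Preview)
Your proof is correct and follows exactly the same route as the paper: (ii)$\Rightarrow$(i) via Proposition~\ref{Good Proposition} with $p=r$, and (i)$\Rightarrow$(ii) via Lemma~\ref{Final Lemma} with $r=1+1/p'>1$. Your additional remarks on uniqueness of the support map and on the loss of exponents are accurate but go beyond what the paper includes.
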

\begin{proof}
	The fact that (i) implies (ii) follows from Lemma \ref{Final Lemma} by taking $r=1+1/p'>1$. The fact that (ii) implies (i) is was already proved in Proposition \ref{Good Proposition}.
\end{proof}

Finally, we summarise all of our characterisations of when $\mr(X)>1$.

\begin{cor}
	Let $X$ be a Banach space, and let $x\mapsto f_{x}$ be a support map on $X$. Then the following are equivalent in pairs.
	\begin{itemize}
		\item[(i)] $X$ has non-trivial maximal roundness (i.e. $\mr(X)>1$).
		\item[(ii)] $\rho_{X}$ is of power type $p$, for some $p>1$.
		\item[(iii)] $X$ is $s$-uniformly smooth, for some $s>1$.
		\item[(iv)] There exists some $r>1$ such that
		\[ \norm{x+ty}=1+f_{x}(y)t+O(t^{r})
		\]
		as $t\rightarrow 0$, where this $O(t^{r})$ holds uniformly over all $x,y\in S_{X}$.
		\item[(v)] $X^{\ast}$ has non-trivial minimal coroundness (i.e. $\mc(X^{\ast})<\infty$).
		\item[(vi)] $\delta_{X^{\ast}}$ is of power type $q$, for some $q\geq 2$.
		\item[(vii)] $X^{\ast}$ is $w$-uniformly convex, for some $w\geq 2$.
		
	\end{itemize}
\end{cor}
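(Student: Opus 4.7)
The plan is to observe that this corollary is a synthesis of results already established in the paper, so the proof consists of assembling them along two parallel chains (primal and dual) that are linked by duality. I would first treat the equivalences among the "primal" conditions (i)--(iv), then pass to the dual via Corollary~\ref{Duality2}, and finally invoke the dual analogues collected in Remark~\ref{Duality Equiv} to close the loop through (v)--(vii).

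For the primal chain, (i)$\iff$(iii) is exactly Lemma~\ref{Uniform Smoothness Equiv}, Enflo's theorem. The equivalence (ii)$\iff$(iii) is the standard fact (cited in the paper before Theorem~\ref{Characterisation}) that for $1\le p\le 2$, $p$-uniform smoothness of $X$ is equivalent to $\rho_X$ being of power type $p$; chaining this with (i)$\iff$(iii) recovers Theorem~\ref{Characterisation} and yields (i)$\iff$(ii). Finally (ii)$\iff$(iv) is precisely the content of Theorem~\ref{Split}, whose two implications were proved in Proposition~\ref{Good Proposition} and Lemma~\ref{Final Lemma}. Thus (i), (ii), (iii), (iv) are mutually equivalent.

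To bridge to the dual conditions, I would apply Corollary~\ref{Duality2}, which asserts $\mr(X)=\mc(X^{\ast})^{\prime}$ and so in particular $\mr(X)>1$ if and only if $\mc(X^{\ast})<\infty$. This gives (i)$\iff$(v). The equivalences (v)$\iff$(vi)$\iff$(vii) are then just the primal chain applied to $X^{\ast}$: Remark~\ref{Duality Equiv} spells out that $\mc(Y)<\infty$ for a Banach space $Y$ is equivalent to $Y$ being $q$-uniformly convex for some $q\ge 2$, which in turn is equivalent to $\delta_Y$ being of power type $q$; taking $Y=X^{\ast}$ gives exactly (v)$\iff$(vi)$\iff$(vii).

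There is no substantive obstacle, since every ingredient has already been proved earlier in the paper; the only care needed is in the bookkeeping — making sure that the quantifier ``for some $p>1$'' (resp.\ ``for some $q\ge 2$'') is preserved throughout, which it is, since each of the cited equivalences is of the form ``there exists an exponent such that \dots''. The resulting proof is therefore a short referencing argument rather than a new calculation.
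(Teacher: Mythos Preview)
Your proposal is correct and follows essentially the same approach as the paper's own proof: both assemble the primal equivalences (i)--(iv) from Lemma~\ref{Uniform Smoothness Equiv}, Theorem~\ref{Characterisation}, and Theorem~\ref{Split}, and then link to the dual chain (v)--(vii) via Remark~\ref{Duality Equiv} (which itself invokes Corollary~\ref{Duality2}). The only cosmetic difference is that you bridge (i)$\iff$(v) directly via Corollary~\ref{Duality2} before citing Remark~\ref{Duality Equiv}, whereas the paper bundles that step into the remark; the content is identical.
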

\begin{proof}
	The equivalence of (i),(ii) and (iii) is simply the content of Lemma \ref{Uniform Smoothness Equiv} and Theorem \ref{Characterisation}. Theorem \ref{Split} gives the equivalence of (ii) and (iv). Finally, Remark \ref{Duality Equiv} shows that (v), (vi) and (vii) are equivalent, and that each of these is equivalent to (ii).
\end{proof}

\section{Further Examples}\label{Further Examples}

In this section we aim to give some further examples of Banach spaces that possess non-trivial roundness or coroundness. Unlike the spaces that we encountered in Section \ref{Roundness Of Standard Spaces}, here we shall deal with spaces for which a precise calculation of their maximal roundness or minimal coroundness is more or less infeasible. Instead, we shall seek to employ the results of the previous section so that we may still conclude non-trivial information about the values of roundness and coroundness that these spaces possess.

For our first two examples, we will be looking at particular examples of Orlicz sequence spaces, which in some sense are a generalisation of $\ell^{p}$. Let us recall the necessary definitions here. For more on Orlicz spaces, the reader is referred to \cite{LT}.

\begin{defn}
	An \textit{Orlicz function} $\Phi:[0,\infty)\rightarrow\mathbb{R}$ is a continuous strictly increasing and convex function such that $\Phi(0)=0$ and $\lim_{t\rightarrow\infty}\Phi(t)=\infty$. To any Orlicz function $\Phi$ we associate the space $\ell_{\Phi}$ of all sequences of real numbers $x=(x_{1},x_{2},\dots)$ such that $\sum_{n=1}^{\infty}\Phi(|x_{n}|/k)<\infty$ for some $k>0$. When equipped with the \textit{Luxemburg norm} $\|\cdot\|_{\Phi}$ defined by
		\[ \|x\|_{\Phi}=\inf\bigg\{k>0:\sum_{n=1}^{\infty}\Phi(|x_{n}|/k)\leq 1\bigg\}
		\]
	the space $(\ell_{\Phi},\|\cdot\|_{\Phi})$ becomes a Banach space, which we shall refer to as the \textit{Orlicz sequence space generated by $\Phi$}.
\end{defn}

It is often useful to deal only with Orlicz functions that satisfy the following growth rate condition.

\begin{defn}
	Let $\Phi:[0,\infty)\rightarrow\mathbb{R}$ be an Orlicz function. Then $\Phi$ is said to satisfy the \textit{$\Delta_{2}$ condition at $0$} if there exist $t_{0},C>0$ such that $\Phi(2t)\leq C\Phi(t)$, for all $0\leq t\leq t_{0}$.
\end{defn}

If $\Phi$ is an Orlicz function that is differentiable in an interval of the form $(0,t_{0})$, for some $t_{0}>0$, there is a simple way to test whether or not $\Phi$ satisfies the $\Delta_{2}$ condition at $0$. Namely, in this case $\Phi$ satisfies the $\Delta_{2}$ condition at $0$ if and only if (see the comments after Proposition 4.a.5 in \cite{LT})
\begin{equation}\label{Delta 2 Test}
	\limsup_{t\rightarrow 0}\frac{t\Phi^{\prime}(t)}{\Phi(t)}<\infty.
\end{equation}

Another notion that we will require when dealing with Orlicz sequence spaces is the notion of equivalence of functions. Suppose that $A\subseteq\mathbb{R}$ and that $f,g:A\rightarrow\mathbb{R}$. We will say that $f$ and $g$ are \textit{equivalent} if there exist $k,K>0$ such that
	\[ K^{-1}g(k^{-1}t)\leq f(t)\leq Kg(kt)
	\]
for all $t\in A$. If $A$ contains an interval of the form $[0,t_{0}]$, for some $t_{0}>0$, we will say that $f$ and $g$ are \textit{equivalent at $0$} if there exists $k,K>0$ and $0<t_{1}\leq t_{0}$ such that
	\[ K^{-1}g(k^{-1}t)\leq f(t)\leq Kg(kt)
	\]
for all $0\leq t\leq t_{1}$.

For our first example, we will give provide an Orlicz sequence space (that is different from $\ell^{p}$) that possesses non-trivial roundness. To do so we will use the following result when estimating the modulus of uniform smoothness of an Orlicz space. This result is really a combination of Proposition 19 and Lemma 20 from \cite{F1}.

\begin{thm}\label{Orlicz Smoothness}
	Let $\Psi:[0,\infty)\rightarrow\mathbb{R}$ be an Orlicz function that satisfies the $\Delta_{2}$ condition at $0$. Define $\Phi:[0,\infty)\rightarrow\mathbb{R}$ by
		\[ \Phi(t)=\bigg(\int_{0}^{1}\frac{\Psi(s)}{s}ds\bigg)^{-1}\int_{0}^{t}\frac{\Psi(s)}{s}ds
		\]
	for all $t\geq 0$. Then $\Phi$ is an Orlicz function equivalent to $\Psi$. Moreover, for $X=\ell_{\Phi}$, there exists $C>0$ such that if $t\in(0,1]$ then the modulus of uniform smoothness of $X$ satisfies
		\[ \rho_{X}(t)\leq Ct^{2}\sup_{\substack{t\leq u\leq 1 \\ 0<v\leq 1}}\frac{\Phi(uv)}{u^{2}\Phi(v)}.
		\]
\end{thm}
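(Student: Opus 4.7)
The plan is to verify the three successive claims of the theorem, relying in the final step on Figiel's detailed analysis of smoothness in Orlicz spaces, which is the content invoked from \cite{F1}.

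First, I would check that the constructed $\Phi$ is an Orlicz function. Continuity and strict monotonicity are immediate from the integral definition, and $\Phi(0)=0$ is trivial. Convexity reduces to verifying that $\Phi'(t) = c\Psi(t)/t$ is non-decreasing, where $c = \bigl(\int_0^1 \Psi(s)/s\, ds\bigr)^{-1}$. But the convexity of $\Psi$ combined with $\Psi(0)=0$ implies that the slope function $s\mapsto\Psi(s)/s$ is non-decreasing (since $\Psi(s) = \Psi((s/t)t + (1-s/t)\cdot 0) \leq (s/t)\Psi(t)$ for $0 < s \leq t$), which does it. Growth to infinity of $\Phi$ follows from the fact that $\Psi(s)/s$ is eventually bounded below by a positive constant (using convexity of $\Psi$ and $\lim_{s\to\infty}\Psi(s)=\infty$), so the defining integral diverges.

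Second, I would show that $\Phi$ and $\Psi$ are equivalent at $0$. Using monotonicity of $\Psi(s)/s$ and the normalisation, one has
\[ \Phi(t) = c\int_0^t \frac{\Psi(s)}{s}\,ds \leq c\int_0^t \frac{\Psi(t)}{t}\,ds = c\,\Psi(t), \]
while restricting to $[t,2t]$ gives
\[ \Phi(2t) \geq c\int_t^{2t}\frac{\Psi(s)}{s}\,ds \geq c\int_t^{2t}\frac{\Psi(t)}{2t}\,ds = \frac{c}{2}\Psi(t). \]
Combining these with the $\Delta_{2}$ condition applied near $0$ (to pass between $t$ and $2t$) furnishes the equivalence in the sense defined earlier in this section, with explicit constants. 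As an immediate consequence, $\Phi$ inherits the $\Delta_{2}$ condition at $0$ from $\Psi$, which will be essential below.

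Third, and most substantively, I would establish the bound on $\rho_{X}(t)$. The strategy is to exploit the built-in second-order structure of $\Phi$: since $t\Phi'(t) = c\Psi(t)$ with $\Psi$ itself convex, one obtains controlled second-order estimates for the expression $\Phi(a+h)+\Phi(a-h)-2\Phi(a)$ in terms of $h^{2}$ and ratios of values of $\Phi$ at comparable scales. Applied coordinate-wise to $x\pm ty$ for $x,y\in S_{X}$ and summed using the modular characterisation $\sum_{n}\Phi(|x_{n}|)=1$ of norm-one vectors, these yield a modular inequality from which one extracts a bound on $\tfrac{1}{2}(\|x+ty\|_{\Phi}+\|x-ty\|_{\Phi})-1$ via the implicit Luxemburg-norm equation. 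The supremum $\sup_{t\leq u\leq 1,\,0<v\leq 1}\Phi(uv)/(u^{2}\Phi(v))$ appears naturally as the sharpest uniform control of these second-order terms across all relevant coordinate scales $v$ and all relevant relative shift sizes $u\geq t$. The main obstacle is managing the interplay between the implicit definition of $\|\cdot\|_{\Phi}$ and the coordinate-wise bounds: one must carefully split the coordinates into those where $|x_{n}|$ is comparable to $\|x\|_{\Phi}$ and those where it is much smaller, and track how the perturbation $ty$ can change the norm without spoiling the $t^{2}$ factor. The $\Delta_{2}$ condition, inherited from Step 2, is the tool that allows one to freely rescale arguments of $\Phi$ up to absolute constants throughout this analysis. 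Since the full execution of this step is precisely the content of Proposition~19 and Lemma~20 of \cite{F1}, I would invoke those results directly rather than reconstructing Figiel's argument.
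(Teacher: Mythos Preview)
Your proposal is correct and matches the paper's treatment: the paper does not prove this theorem but simply records it as ``a combination of Proposition~19 and Lemma~20 from \cite{F1}'', which is exactly what you invoke for the substantive smoothness estimate. Your additional verification that $\Phi$ is an Orlicz function equivalent to $\Psi$ is sound (and more detailed than anything the paper supplies); note only that the equivalence you derive already holds in the paper's sense without needing the $\Delta_{2}$ condition, since the definition allows rescaling inside the argument.
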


\begin{exam} \textit{(An Orlicz space with non-trivial roundness).}
	Here we will consider an Orlicz space that is essentially derived from the function $t^{p}(1+|\log(t)|)$. However, there are quite a few technical obstacles that one must overcome to produce a usable estimate for the modulus of uniform smoothness of a given Orlicz space. Of course, the main tool that we will use to do so is Theorem \ref{Orlicz Smoothness}.
	
	So, let us fix $1<p<2$. It can be checked using elementary calculus that there exists $t_{0}\in(0,1)$ such that  if we define $\Psi_{0}:[0,1]\rightarrow\mathbb{R}$ by $\Psi_{0}(t)=t^{p}|\log(t_{0}t)|+(1-1/p)t^{p}$, then $\Psi_{0}$ is strictly increasing and convex on the interval $[0,1]$. It is now possible to define the Orlicz function $\Psi:[0,\infty)\rightarrow\mathbb{R}$ by
		\[ \Psi(t)=\begin{cases} \Psi_{0}(t),&0\leq t\leq 1, \\ \Psi_{0}(1)+(t-1)\Psi_{0}^{\prime}(1),&1\leq t<\infty,\end{cases}
		\]
	where here $\Psi_{0}^{\prime}(1)$ denotes the derivative of $\Psi_{0}$ at $t=1$. We now claim that $\Psi$ satisfies the $\Delta_{2}$ condition at $0$. Indeed, one has that
		\[ \limsup_{t\rightarrow 0}\frac{t\Psi^{\prime}(t)}{\Psi(t)}=\lim_{t\rightarrow 0}\frac{p|\log(t_{0}t)|+p-2}{|\log(t_{0}t)|+1-1/p}=p.
		\]
	By  (\ref{Delta 2 Test}), this is sufficient to conclude that $\Psi$ satisfies the $\Delta_{2}$ condition at $0$.
	
	Next we compute $\Phi$, as given in Theorem \ref{Orlicz Smoothness}. If $t\in[0,1]$, elementary integration techniques show that
		\[ \Phi(t)=c_{p}t^{p}(1+|\log(t_{0}t)|)
		\]
	where $c_{p}=(1+|\log(t_{0})|)^{-1}$. Thus by Theorem \ref{Orlicz Smoothness}, for $X=\ell_{\Phi}$ and $t\in[0,1]$ one has that (note that here we will be using the fact that $1<p<2$)
	
		\begin{align*}
			\rho_{X}(t)&\leq Ct^{2}\sup_{\substack{t\leq u\leq 1 \\ 0<v\leq 1}}\frac{\Phi(uv)}{u^{2}\Phi(v)}
			\\&=Ct^{2}\sup_{\substack{t\leq u\leq 1 \\ 0<v\leq 1}}u^{p-2}\bigg(1+\frac{|\log(u)|}{|\log(t_{0}v)|+1}\bigg)
			\\&=Ct^{2}\times t^{p-2}(1+c_{p}|\log(t)|)
			\\&=Ct^{p}(1+c_{p}|\log(t)|).
		\end{align*}
	But it is easily checked that (by using L'H\^{o}pital's rule for example) if $0<q<p$ then
		\[ \lim_{t\rightarrow 0}\frac{Ct^{p}(1+c_{p}|\log(t)|)}{t^{q}}=0.
		\]
	Hence, $\rho_{X}$ is of power type $q$, for all $0<q<p$. By Theorem \ref{Characterisation}, since $p>1$ this is sufficient to conclude that $\mr(X)=\mr(\ell_{\Phi})>1$.
\end{exam}

For our second example, we will provide an Orlicz sequence space (again, different from $\ell^{p}$) that possesses non-trivial coroundness. This time of course, we will require estimates pertaining to the modulus of uniform convexity of Orlicz spaces. The result that we shall use here is the following one (which can be found in the remarks after Lemma 1.e.8 in \cite{LT} say).

\begin{thm}\label{Orlicz Convexity}
	Let $\Phi$ be an Orlicz function and let $X=\ell_{\Phi}$. Suppose that the following conditions hold:
	\begin{enumerate}
		\item $\Phi$ is super-multiplicative. That is, there exists $c>0$ such that $\Phi(st)\geq c\,\Phi(s)\Phi(t)$, for all $0\leq s,t\leq 1$.
		\item $\Phi(t^{1/2})$ is equivalent to a convex function.
	\end{enumerate}
	Then $\delta_{X}$, the modulus of uniform convexity of $X$, is equivalent to $\Phi$ at $0$. That is, there exist $k,K>0$ and $t_{0}>0$ such that
	\[ K^{-1}\Phi(k^{-1}t)\leq\delta_{X}(t)\leq K\Phi(kt)
	\]
	for all $0\leq t\leq t_{0}$.
\end{thm}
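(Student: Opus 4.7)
The plan is to prove the equivalence by establishing the upper and lower bounds on $\delta_{X}$ separately. A useful preliminary: super-multiplicativity with $s=\tfrac{1}{2}$ gives $\Phi(u)\geq c\Phi(\tfrac{1}{2})\Phi(2u)$, equivalently $\Phi(2u)\leq(c\Phi(\tfrac{1}{2}))^{-1}\Phi(u)$, so $\Phi$ satisfies the $\Delta_{2}$ condition at $0$. In particular, for $x\in\ell_{\Phi}$ near the origin one has $\|x\|_{\Phi}=1$ if and only if $\sum_{n}\Phi(|x_{n}|)=1$, together with standard quantitative modular-to-norm conversions. For the upper bound, I would test against two-coordinate vectors: for small $\epsilon>0$, set $b=\Phi^{-1}(1)\epsilon/2$ and $a=\Phi^{-1}(1-\Phi(b))$ so that $\Phi(a)+\Phi(b)=1$, then take $x=(a,b,0,0,\ldots)$ and $y=(a,-b,0,0,\ldots)$, both on $S_{X}$. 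One checks directly that $\|x-y\|_{\Phi}=2b/\Phi^{-1}(1)=\epsilon$, and a Taylor expansion of $\Phi^{-1}$ at $1$ yields $1-\|\tfrac{1}{2}(x+y)\|_{\Phi}=\Theta(\Phi(\epsilon/2))$, giving $\delta_{X}(\epsilon)\leq K\Phi(k\epsilon)$.

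For the lower bound the strategy reduces to establishing a pointwise Clarkson-type inequality
\[\Phi\!\left(\tfrac{|a+b|}{2}\right)+c_{0}\,\Phi\!\left(\tfrac{|a-b|}{2}\right)\;\leq\;\tfrac{\Phi(a)+\Phi(b)}{2}\]
for $a,b$ in a neighbourhood of $0$, with a uniform positive constant $c_{0}$. Granting this, for $x,y\in S_{X}$ with $\|x-y\|_{\Phi}\geq\epsilon$, summing over coordinates together with $\sum_{n}\Phi(|x_{n}|)=\sum_{n}\Phi(|y_{n}|)=1$ produces $\sum_{n}\Phi(|x_{n}+y_{n}|/2)\leq 1-c_{0}\sum_{n}\Phi(|x_{n}-y_{n}|/2)$. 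Super-multiplicativity applied to the identity $\sum_{n}\Phi(|x_{n}-y_{n}|/(2\|\tfrac{1}{2}(x-y)\|_{\Phi}))=1$ then provides a lower bound of the form $\sum_{n}\Phi(|x_{n}-y_{n}|/2)\geq c'\Phi(c''\epsilon)$, and the quantitative form of $\Delta_{2}$ converts the resulting modular estimate to $\|\tfrac{1}{2}(x+y)\|_{\Phi}\leq 1-c_{1}\Phi(c_{2}\epsilon)$, which is the claimed lower bound.

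The principal obstacle is producing the pointwise Clarkson inequality from the two hypotheses on $\Phi$. Convexity of $F(t)=\Phi(\sqrt{t})$ applied at $s=(a+b)^{2}$ and $t=(a-b)^{2}$ yields the preliminary estimate $\Phi(\sqrt{(a^{2}+b^{2})/2})\leq(\Phi(a)+\Phi(b))/2$, so it suffices to establish a lower bound of the form $\Phi(\sqrt{u^{2}+v^{2}})\geq\Phi(u)+c_{0}\Phi(v)$ for $0\leq v\leq u$ near $0$, where $u=(a+b)/2$ and $v=|a-b|/2$. Since $\Phi$ is convex, monotonicity of $\Phi'$ gives
\[\Phi(\sqrt{u^{2}+v^{2}})-\Phi(u)\;\geq\;\Phi'(u)\bigl(\sqrt{u^{2}+v^{2}}-u\bigr)\;\geq\;\tfrac{\Phi'(u)v^{2}}{4u},\]
and super-multiplicativity is then used to compare $\Phi'(u)v^{2}/u$ with $\Phi(v)$ via factors of the form $v/u$. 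The quantitative book-keeping here, and the requirement that $c_{0}$ be uniform in $a,b$, is the most delicate part of the argument; the remaining steps are standard modular-space manipulations.
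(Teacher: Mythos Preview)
The paper does not prove this theorem; it is simply quoted from the remarks after Lemma~1.e.8 in Lindenstrauss--Tzafriri, so there is no in-paper argument for you to compare against.

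On the merits of your sketch, two genuine gaps deserve attention. First, hypothesis~(2) only asserts that $\Phi(t^{1/2})$ is \emph{equivalent} to a convex function, not that it \emph{is} convex; your derivation of the preliminary estimate $\Phi\bigl(\sqrt{(a^{2}+b^{2})/2}\,\bigr)\le(\Phi(a)+\Phi(b))/2$ applies convexity of $F(t)=\Phi(\sqrt{t})$ directly (and incidentally at $s=a^{2}$, $t=b^{2}$, not at $s=(a+b)^{2}$, $t=(a-b)^{2}$ as written). To proceed correctly you would have to pass to the auxiliary convex function and carry the equivalence constants through every step, which changes the shape of the pointwise inequality you are aiming for. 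Second, the key bound $\Phi(\sqrt{u^{2}+v^{2}})\ge\Phi(u)+c_{0}\Phi(v)$ with $c_{0}$ uniform in $0\le v\le u$ is, as you concede, not actually established. Your route via $\Phi'(u)v^{2}/u$ ultimately requires $\Phi(s)\le Cs^{2}$ near $0$; this can be extracted from hypothesis~(2) (equivalence to a convex function vanishing at $0$ forces $\Phi(\sqrt{t})/t$ to be bounded near $0$), but that has to be argued, and the interaction with super-multiplicativity still needs to be written out in full before the claim of a uniform $c_{0}$ is justified. The upper bound via two-coordinate test vectors is the standard argument and should go through as you describe.
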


\begin{exam} \textit{(An Orlicz space with non-trivial coroundness).}
	For our second example, we will take $p\geq 2$ and define the Orlicz function $\Phi:[0,\infty)\rightarrow\mathbb{R}$ by
	\[ \Phi(t)=\begin{cases} \frac{t^{p}}{1+|\log(t)|},& 0\leq t\leq 1, \\ t^{2p},& 1<t<\infty. \end{cases}
	\]
	One can check that $\Phi(t^{1/2})$ is convex on $[0,\infty)$.
	
	We now show that $\Phi$ is super-multiplicative, in the sense given in Theorem \ref{Orlicz Convexity}. So, take $0\leq s,t\leq 1$. We claim that $\Phi(st)\geq\Phi(s)\Phi(t)$. Indeed, one has that
		\begin{align*}
			\Phi(st)\geq\Phi(s)\Phi(t)&\iff\frac{s^{p}t^{p}}{1+|\log(st)|}\geq\bigg(\frac{s^{p}}{1+|\log(s)|}\bigg)\bigg(\frac{t^{p}}{1+|\log(t)|}\bigg)
			\\&\iff (1+|\log(s)|)(1+|\log(t)|)\geq1+|\log(st)|.
		\end{align*}
	However, this last inequality is true since
	\begin{align*}
		1+|\log(st)|&=1+|\log(s)+\log(t)|
		\\&\leq1+|\log(s)|+|\log(t)|
		\\&\leq1+|\log(s)|+|\log(t)|+|\log(s)||\log(t)|
		\\&=(1+|\log(s)|)(1+|\log(t)|)
	\end{align*}
	and so the claim is proved. Hence, by Theorem \ref{Orlicz Convexity} there exist $k,K>0$ and $t_{0}>0$ such that
	\[ K^{-1}\Phi(k^{-1}t)\leq\delta_{X}(t)\leq K\Phi(kt)
	\]
	for all $0\leq t\leq t_{0}$. Now,
		\[ \lim_{t\rightarrow 0}\frac{\Phi(t)}{t^{q}}=\begin{cases} \infty, & p<q<\infty, \\ 0, & q=p.\end{cases}
		\]
	But due to the equivalence of $\Phi$ and $\delta_{X}$ at $0$ this also means that
		\[ \lim_{t\rightarrow 0}\frac{\delta_{X}(t)}{t^{q}}=\begin{cases} \infty, & p<q<\infty, \\ 0, & q=p.\end{cases}
		\]
	Hence $\delta_{X}$ is of power type $q$ for precisely those $q$ such that $p<q<\infty$. Consequently, by Remark \ref{Duality Equiv} we may conclude that $\mc(X)<\infty$.
\end{exam}

Of course, arguing as above Theorem \ref{Orlicz Convexity} gives the following sufficient conditions on $\Phi$ for the Orlicz space $\ell_{\Phi}$ to have non-trivial coroundness.

\begin{cor}
	Let $\Phi$ be an Orlicz function and let $X=\ell_{\Phi}$. Suppose that the following conditions hold:
	\begin{enumerate}
		\item $\Phi$ is super-multiplicative. That is, there exists $c>0$ such that $\Phi(st)\geq c\,\Phi(s)\Phi(t)$, for all $0\leq s,t\leq 1$.
		\item $\Phi(t^{1/2})$ is equivalent to a convex function.
		\item $\Phi$ is of power type $q$, for some $q\geq 2$. That is, there exist $C>0$ and $t_{0}>0$ such that $\Phi(t)\geq Ct^{q}$, for all $0\leq t\leq t_{0}$.
	\end{enumerate}
	Then $\mc(X)<\infty$.
\end{cor}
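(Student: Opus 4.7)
The plan is to chain together Theorem \ref{Orlicz Convexity}, the given power-type hypothesis on $\Phi$, and the equivalence stated in Remark \ref{Duality Equiv}. Conditions (1) and (2) are exactly what Theorem \ref{Orlicz Convexity} requires, so they immediately yield constants $k, K > 0$ and some $t_0 > 0$ for which
\[ K^{-1} \Phi(k^{-1} t) \leq \delta_X(t) \leq K \Phi(kt), \qquad 0 \leq t \leq t_0. \]
From this I only need the lower bound. By condition (3), there exist $C > 0$ and $t_1 > 0$ with $\Phi(s) \geq C s^q$ for $0 \leq s \leq t_1$. Setting $s = k^{-1} t$, one gets $\Phi(k^{-1} t) \geq C k^{-q} t^q$ for $0 \leq t \leq k t_1$, and so combining with the previous inequality gives
\[ \delta_X(t) \geq (K^{-1} C k^{-q}) \, t^q \qquad \text{for all } 0 \leq t \leq \min(t_0, k t_1). \]

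The one small step remaining is to promote this local estimate to the global power-type inequality $\delta_X(\epsilon) \geq C' \epsilon^q$ on all of $[0, 2)$, as required by the definition of power type in Remark \ref{Duality Equiv}. For this I would first note that the local bound already shows $\delta_X(\epsilon) > 0$ for all sufficiently small $\epsilon > 0$; since $\delta_X$ is non-decreasing (a standard property of the modulus of uniform convexity), it follows that $\delta_X(\epsilon) > 0$ for all $\epsilon \in (0, 2)$, i.e.\ $X$ is uniformly convex by Theorem \ref{Omnibus}(ii). Setting $\epsilon_0 = \min(t_0, k t_1)$, on the compact interval $[\epsilon_0, 2)$ the function $\delta_X(\epsilon)/\epsilon^q$ is bounded below by $\delta_X(\epsilon_0)/2^q > 0$ by monotonicity, so after shrinking the constant if necessary one obtains a single $C' > 0$ with $\delta_X(\epsilon) \geq C' \epsilon^q$ for all $\epsilon \in [0, 2)$.

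Thus $\delta_X$ is of power type $q$ for some $q \geq 2$, and Remark \ref{Duality Equiv} then gives $\mc(X) < \infty$. The argument really is just an application of the preceding theorem and remark, and the only mildly non-routine point — the reason I separate it as a step — is the passage from a neighbourhood of $0$ to the full interval $[0,2)$, which is where one needs to invoke monotonicity of $\delta_X$ rather than trying to use the equivalence of $\delta_X$ and $\Phi$ (which is asserted only near $0$).
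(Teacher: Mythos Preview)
Your proof is correct and follows exactly the approach the paper intends: the paper does not give a separate proof of this corollary but simply writes ``arguing as above Theorem \ref{Orlicz Convexity} gives the following,'' referring back to the preceding example, and your argument is precisely the natural unpacking of that example's reasoning (apply Theorem \ref{Orlicz Convexity}, use the power-type lower bound on $\Phi$, then invoke Remark \ref{Duality Equiv}). If anything you are more careful than the paper, since you explicitly handle the passage from the local estimate near $0$ to the global inequality on $[0,2)$ via monotonicity of $\delta_X$, a point the paper's example leaves implicit.
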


We now move onto our final example. As mentioned in Remark \ref{Enflo Claim}, Enflo in \cite{E3} claimed without proof that it should be possible to construct a Banach space $X$ whose maximal roundness was different from that of its dual. Moreover, he claimed that this should also be possible with $\dim(X)=2$. Here we confirm Enflo's claim, by explicitly providing a $2$-dimensional Banach space and using the results from Section \ref{Nontrivial Roundness} to prove that $\mr(X)\neq\mr(X^{\ast})$. To this end, let $R=\mathbb{R}^{2}$ and equip $R$ with the norm
	\[ \norm{(x_{1},x_{2})}_R = \begin{cases}
		|x_{2}|,     & \text{$|x_{2}| \ge |x_{1}|$}, \\
		\frac{x_{1}^{2}+x_{2}^{2}}{2|x_{1}|}, & \text{otherwise.}
		\end{cases}
	\]
It is a simple matter to check that this formula does indeed define a norm on $R=\mathbb{R}^{2}$. We shall refer to the space $(R,\|\cdot\|_{R})$ as the racetrack space. It is straightforward to verify that the dual space $R^{\ast}$ has dual norm given by
	\[ \|(x_{1},x_{2})\|_{R^{\ast}}=\sqrt{x_{1}^{2}+x_{2}^{2}}+|x_{1}|.
	\]
A picture of the unit spheres of the racetrack space and its dual, $S_{R}$ and $S_{R^{\ast}}$ respectively, is shown in Figure \ref{racetrack}.

 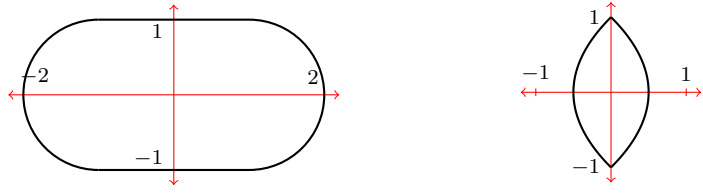
\begin{figure}
\begin{center}
 \begin{tikzpicture}
   \draw[<->,red] (-2.2,0) -- (2.2,0);
   \draw[<->,red] (0,-1.2) -- (0,1.2);
   \draw[thick] (-1,1) -- (1,1);
   \draw[thick] (-1,-1) -- (1,-1);
   \draw[thick] (-1,1) arc (90:270:1);
   \draw[thick] (1,-1) arc (-90:90:1);
   \draw (-1.85,0) node[above] {\tiny{$-2$}};
   \draw (1.85,0) node[above] {\tiny{$2$}};
   \draw (0,0.85) node[left] {\tiny{$1$}};
   \draw (0,-0.85) node[left] {\tiny{$-1$}};
 \end{tikzpicture}
  \hspace{2cm}
   \begin{tikzpicture}
   \draw[<->,red] (-1.2,0) -- (1.2,0);
   \draw[<->,red] (0,-1.2) -- (0,1.2);
   \draw[red] (-1,-0.05) -- (-1,0.05);
   \draw[red] (1,-0.05) -- (1,0.05);
   \draw[thick] plot[smooth,domain=-1:1] ({0.5*(1-\x*\x)},\x);
   \draw[thick] plot[smooth,domain=-1:1] ({-0.5*(1-\x*\x)},\x);
   \draw (-1,0) node[above] {\tiny{$-1$}};
   \draw (1,0) node[above] {\tiny{$1$}};
   \draw (0,1) node[left] {\tiny{$1$}};
   \draw (0,-1) node[left] {\tiny{$-1$}};
 \end{tikzpicture}
 \caption{The unit spheres of the racetrack space and its dual.}\label{racetrack}
\end{center}
 \end{figure}

From these pictures, one is led to suspect that $R$ is uniformly smooth but not uniformly convex, and that $R^{\ast}$ is uniformly convex but not uniformly smooth. Indeed, this turns out to be the case and we shall prove this by computing the asymptotic behaviour of the modulus of uniform smoothness of both $R$ and $R^{\ast}$. We start with the easier of the two computations.

\begin{prop}
	Let $\rho_{R^{\ast}}$ denote the modulus of uniform smoothness of the dual of the racetrack space. Then $\rho_{R^{\ast}}(t)=\Theta(t)$, as $t\rightarrow 0$.
\end{prop}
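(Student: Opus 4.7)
The plan is to prove the two sides of the $\Theta(t)$ estimate separately. The upper bound $\rho_{R^*}(t)\le t$ is automatic for any Banach space, since the triangle inequality gives $\|x+y\|_{R^*}+\|x-y\|_{R^*}\le 2\|x\|_{R^*}+2\|y\|_{R^*}=2+2t$ whenever $\|x\|_{R^*}=1$ and $\|y\|_{R^*}=t$. So the real content of the proposition is to exhibit a matching lower bound $\rho_{R^*}(t)\ge ct$ for some $c>0$ and all sufficiently small $t$.

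To produce such a lower bound I would exploit a sharp corner of the unit sphere $S_{R^*}$. Setting $\|(x_1,x_2)\|_{R^*}=1$ and solving $\sqrt{x_1^2+x_2^2}+|x_1|=1$ shows that $S_{R^*}$ is described by $x_2^2=1-2|x_1|$ with $|x_1|\le 1/2$; that is, $S_{R^*}$ is the union of the two parabolic arcs $x_1=\pm(1-x_2^2)/2$, and these arcs meet at the two points $(0,\pm 1)$ with distinct one-sided tangents. These are precisely the points at which $R^*$ fails to be smooth, and the plan is to test $\rho_{R^*}$ against a perturbation along the direction in which smoothness fails.

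Concretely, I would take $x=(0,1)\in S_{R^*}$ and $y=(t/2,0)$, for which $\|y\|_{R^*}=|t/2|+|t/2|=t$. A direct computation of the dual norm gives
\[
\|x\pm y\|_{R^*}=\sqrt{1+t^2/4}+t/2,
\]
so that
\[
\tfrac{1}{2}\bigl(\|x+y\|_{R^*}+\|x-y\|_{R^*}\bigr)-1=\sqrt{1+t^2/4}-1+\tfrac{t}{2}\;\ge\;\tfrac{t}{2}
\]
for every $t\ge 0$. This shows $\rho_{R^*}(t)\ge t/2$, and combined with the general upper bound yields $t/2\le\rho_{R^*}(t)\le t$, which certainly gives $\rho_{R^*}(t)=\Theta(t)$ as $t\to 0$.

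There is no genuine obstacle here; the only creative step is noticing that the correct choice of $x$ is a corner of $S_{R^*}$ (namely $(0,\pm 1)$, not the smooth points $(\pm 1/2,0)$), and that the correct direction $y$ is the one whose component parallel to the jump in support functional is nonzero. Once those are chosen, the verification reduces to elementary arithmetic with the explicit formula for $\|\cdot\|_{R^*}$.
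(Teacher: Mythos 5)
Your proof is correct and follows essentially the same route as the paper: the trivial triangle-inequality upper bound $\rho_{R^{\ast}}(t)\leq t$, together with the lower bound $\rho_{R^{\ast}}(t)\geq t/2$ obtained from exactly the same test pair $x=(0,1)$, $y=(t/2,0)$ and the estimate $\sqrt{1+t^{2}/4}+t/2-1\geq t/2$. The additional discussion identifying $(0,\pm 1)$ as corner points of $S_{R^{\ast}}$ is a nice motivation but not needed for the verification.
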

\begin{proof}
	Note that by the Triangle Inequality one trivially has that $\rho_{R^{\ast}}(t)\leq t$, for all $0\leq t\leq 1$. Now, keeping $0\leq t\leq 1$ let us set $x=(0,1)$ and $y=(t/2,0)$. Using the formula for $\|\cdot\|_{R^{\ast}}$ it is readily checked that $\|x\|_{R^{\ast}}=1$ and $\|y\|_{R^{\ast}}=t$. Hence
		\begin{align*}
			\rho_{R^{\ast}}(t)&\geq\frac{1}{2}(\|x+y\|_{R^{\ast}}+\|x-y\|_{R^{\ast}})-1
			\\&=\sqrt{\frac{t^{2}}{4}+1}+\frac{t}{2}-1
			\\&\geq 1+\frac{t}{2}-1
			 =\frac{t}{2}
		\end{align*}
	which completes the proof.
\end{proof}

Next, we have the much more tedious analogous computation for $R$. For this computation, we will use the following elementary inequality.

\begin{lem}\label{Simple Bound}
	Let $0<a\leq z\leq b$. Then
		$ \ds \frac{z^{2}+a^{2}}{2z}\leq\frac{b^{2}+a^{2}}{2b}
		$.
\end{lem}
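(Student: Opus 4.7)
The inequality is an easy calibration result, so the plan is to give a direct algebraic verification rather than invoke anything fancy. The cleanest route is to subtract the two sides, clear denominators, and factor.

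The plan is to compute
\[
\frac{b^{2}+a^{2}}{2b}-\frac{z^{2}+a^{2}}{2z}
=\frac{z(b^{2}+a^{2})-b(z^{2}+a^{2})}{2bz},
\]
and then rearrange the numerator. Grouping the $b^{2}$ and $a^{2}$ terms separately, the numerator factors as $(b-z)(bz-a^{2})$. Under the hypothesis $0<a\leq z\leq b$, both factors are nonnegative: $b-z\geq 0$ by assumption, and $bz\geq a\cdot a=a^{2}$ since each of $b,z$ is at least $a>0$. The denominator $2bz$ is strictly positive, so the displayed difference is nonnegative, which is exactly the claim.

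An equivalent (and perhaps conceptually cleaner) way to present the same proof is to view $f(z)=\frac{z^{2}+a^{2}}{2z}=\frac{z}{2}+\frac{a^{2}}{2z}$ as a function of $z\in(0,\infty)$, observe $f'(z)=\frac{1}{2}-\frac{a^{2}}{2z^{2}}$, and note that $f'(z)\geq 0$ precisely when $z\geq a$. Hence $f$ is monotone nondecreasing on $[a,\infty)$, and since both $z$ and $b$ lie in this interval with $z\leq b$, we get $f(z)\leq f(b)$.

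There is no real obstacle here; the only minor point to be careful about is ensuring strict positivity of the denominator $2bz$ (guaranteed by $a>0$) so that the sign analysis of the numerator is actually decisive. I would write up the algebraic factoring version in one or two lines, since it avoids even mentioning differentiation and makes the use of the hypothesis $z\geq a$ completely transparent.
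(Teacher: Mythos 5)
Your proof is correct: the factorisation $z(b^{2}+a^{2})-b(z^{2}+a^{2})=(b-z)(bz-a^{2})$ is right, both factors are nonnegative under $0<a\leq z\leq b$, and the monotonicity argument via $f(z)=\frac{z}{2}+\frac{a^{2}}{2z}$ is an equally valid alternative. The paper states this lemma without proof (treating it as elementary), so there is no authorial argument to compare against; either of your one-line verifications is exactly the kind of justification intended.
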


\begin{prop}
	Let $\rho_{R}$ be the modulus of uniform smoothness of the racetrack space. Then $\rho_{R}(t)=\Theta(t^{2})$, as $t\rightarrow 0$. More precisely, if $0\leq t\leq 1/6$ then
		\[ (1+t^{2})^{1/2}-1\leq\rho_{R}(t)\leq18 t^{2}.
		\]
\end{prop}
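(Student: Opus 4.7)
The plan is to prove the two bounds separately. For the lower bound I would exhibit a specific witness pair; for the upper bound I would partition $S_{R}$ into regions on which the two-case norm formula behaves uniformly, and case-analyse.

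For the lower bound, take $x_0=(1,1)$ (the ``corner'' of the stadium) and $y_0=(t,-t)$. Both $\|x_0\|_R=1$ and $\|y_0\|_R=t$ read off immediately from the definition. Then $x_0+y_0=(1+t,1-t)$ has $|(x_0+y_0)_1|>|(x_0+y_0)_2|$, so the semicircle formula gives $\|x_0+y_0\|_R=((1+t)^2+(1-t)^2)/(2(1+t))=(1+t^2)/(1+t)$, while $x_0-y_0=(1-t,1+t)$ has $|(x_0-y_0)_2|>|(x_0-y_0)_1|$, so the stadium formula gives $\|x_0-y_0\|_R=1+t$. Averaging and simplifying yields $\rho_R(t)\ge t^2/(1+t)$. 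The desired inequality $(1+t^2)^{1/2}-1\le t^2/(1+t)$ is then equivalent, after rearranging to $(1+t^2)^{1/2}\le(1+t+t^2)/(1+t)$ and squaring, to the identity $(1+t+t^2)^2-(1+t)^2(1+t^2)=t^2\ge 0$.

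For the upper bound, fix $x\in S_R$ and $y$ with $\|y\|_R=t\le 1/6$; these force $|y_1|\le 2t$ and $|y_2|\le t$. By the sign symmetries of $\|\cdot\|_R$ I may assume $x_1,x_2\ge 0$, so $x$ lies either on the top segment (Case I: $x=(s,1)$, $s\in[0,1]$) or on the upper-right semicircular arc (Case II: $x=(1+\cos\theta,\sin\theta)$, $\theta\in[0,\pi/2)$). In each case I split further according to whether $x+y$ and $x-y$ lie in the stadium regime $|z_2|\ge|z_1|$ or the semicircle regime $|z_2|<|z_1|$, and apply the corresponding half of the norm formula. In Case I with $s\le 1-3t$, the bounds on $y$ force both $x\pm y$ into the stadium regime, giving $\|x+y\|_R+\|x-y\|_R=2$ exactly. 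For $s\in(1-3t,1]$, after using $y\mapsto -y$ symmetry I may assume $y_1\ge y_2$, so only $x+y$ can be semicircular; applying Lemma \ref{Simple Bound} with $a=1+y_2$, $z=s+y_1$, $b=1+y_1$ bounds $\|x+y\|_R$ by $(1+y_2)+(y_1-y_2)^2/(2(1+y_1))$, whence $\|x+y\|_R+\|x-y\|_R-2\le (y_1-y_2)^2/(2(1+y_1))\le 27t^2/4$. In Case II, setting $v=(x_1-1,x_2)\in S_2$ with $v_1,v_2\ge 0$ and $\gamma=1+v_1-v_2\ge 0$, when $|y_1-y_2|<\gamma$ both $x\pm y$ are semicircular and a direct computation using $v_1^2+v_2^2=1$ yields
\[
\|x+y\|_R+\|x-y\|_R-2 = \frac{(1+v_1)y_2^2+(1-v_1)y_1^2-2v_2y_1y_2}{(1+v_1)^2-y_1^2},
\]
a quotient with numerator at most $10t^2$ and denominator at least $8/9$. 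The remaining sub-case $|y_1-y_2|\ge\gamma$ forces $\gamma\le 3t$ (so $x$ is near the corner $(1,1)$) and parallels Case I, producing $\|x+y\|_R+\|x-y\|_R-2\le 27t^2$. Combining the sub-cases gives $(\|x+y\|_R+\|x-y\|_R)/2-1\le 27t^2/2<18t^2$.

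The hard part will be the near-corner analysis, where the norm switches between its two definitions and a naive triangle inequality only yields an $O(t)$ estimate. The saving grace is the identity $v_1^2+v_2^2=1$, which kills the would-be $O(1)$ contribution to the numerator of the Case II formula; together with Lemma \ref{Simple Bound}, which cleanly replaces $(z^2+a^2)/(2z)$ by $(b^2+a^2)/(2b)$ for $z\le b$ in the near-corner analysis of Case I, this is enough to force the required $O(t^2)$ behaviour uniformly in $x\in S_R$.
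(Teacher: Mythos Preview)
Your proposal is correct, and it takes a noticeably cleaner route than the paper's proof.

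For the lower bound, the paper simply invokes Nordlander's theorem that $(1+t^{2})^{1/2}-1\le\rho_{X}(t)$ for any Banach space of dimension at least two. Your explicit witness $x_{0}=(1,1)$, $y_{0}=(t,-t)$ gives the self-contained bound $\rho_{R}(t)\ge t^{2}/(1+t)$, and the algebraic verification $(1+t+t^{2})^{2}-(1+t)^{2}(1+t^{2})=t^{2}$ shows this dominates $(1+t^{2})^{1/2}-1$. So you recover the stated inequality without appealing to an external result.

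For the upper bound, both arguments rest on Lemma~\ref{Simple Bound} to control the semicircular norm formula in the mixed-regime sub-cases, but the organisation differs. The paper parametrises \emph{both} $x$ and $y/t$ by which arc of $S_{R}$ they lie on ($L$, $C$, $-L$, $-C$), producing four main cases and roughly a dozen sub-cases. You instead exploit the full sign symmetry of $\|\cdot\|_{R}$ to force $x$ into the first quadrant (two cases), and parametrise $y$ only through the coordinate bounds $|y_{1}|\le 2t$, $|y_{2}|\le t$, never splitting on where $y/t$ sits on the sphere. The key reduction is your observation that in the arc case the conditions ``$x\pm y$ semicircular'' are equivalent to $\mp(y_{1}-y_{2})<\gamma$, which collapses the sub-case structure to ``$|y_{1}-y_{2}|<\gamma$'' (both semicircular, handled by the exact quadratic-form identity with numerator $(1-v_{1})y_{1}^{2}+(1+v_{1})y_{2}^{2}-2v_{2}y_{1}y_{2}$) versus ``$|y_{1}-y_{2}|\ge\gamma$'' (near-corner, one of each regime). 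This halves the casework and also yields the slightly sharper constant $27/2$ in place of the paper's $18$.
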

\begin{proof}
	First, let us comment on the lower bound. This is due to a result of Nordlander \cite{N} who showed that if $X$ is any Banach space of dimension at least $2$ then
		\[ (1+t^{2})^{1/2}-1\leq\rho_{X}(t)
		\]
	for all $0\leq t\leq 1$. Note that
		\[ \lim_{t\rightarrow 0}\frac{(1+t^{2})^{1/2}-1}{t^{2}}=\frac{1}{2}
		\]
	which shows that $(1+t^{2})^{1/2}-1=\Theta(t^{2})$, as $t\rightarrow 0$.
	
	Now we turn our attention to the upper bound. Let
		\[ L=\{(s,1)\in\mathbb{R}^{2}:-1\leq s\leq 1\}
		\]
	and let
		\[ C=\{(\cos\theta+1,\sin\theta)\in\mathbb{R}^{2}:-\pi/2\leq\theta\leq\pi/2\}.
		\]
	Since the unit sphere of $R$ is given by $S_{R}=L\cup C\cup(-L)\cup(-C)$, by symmetry one has that if $t\neq 0$ then
		\[ \rho_{X}(t)=\sup\bigg\{\frac{1}{2}(\norm{x+y}+\norm{x-y})-1:x,y/t\in L\cup C\bigg\}.
		\]
	Also, let us put
		\[ S_{1}=\{(x_{1},x_{2})\in\mathbb{R}^{2}:|x_{2}|\geq|x_{1}|\}
		\]
	and $S_{2}=\mathbb{R}^{2}\setminus S_{1}$. We now consider four cases, depending on whether $x$ and $y/t$ are in $L$ or $C$. Also, for the remainder of the proof, we shall assume that $0<t\leq 1/6$.
	
	\textbf{Case 1.} Suppose that $x,y/t\in L$. That is, $x=(s,1)$ for some $-1\leq s\leq 1$ and $y=(tr,t)$ for some $-1\leq r\leq 1$. Then
	\[ x+y=(s+tr,1+t),\,x-y=(s-tr,1-t).
	\]
	Note that it is not possible to have $x+y\in S_{2}$ so there are only two cases to check.
	
	(1a) Suppose that $x+y,x-y\in S_{1}$. Then
	\[ \frac{1}{2}(\|x+y\|_{R}+\|x-y\|_{R})-1=\frac{1}{2}(|1+t|+|1-t|)-1=\frac{1}{2}((1+t)+(1-t))-1=0.
	\]
	(1b) Suppose that $x+y\in S_{1}$ and $x-y\in S_{2}$. This means that
	\[ |1+t|\geq|s+tr|,\,|s-tr|>|1-t|.
	\]
	In particular, the Triangle Inequality implies that
	\[ |1-t|\leq|s-tr|\leq|s|+|r|t\leq 1+t.
	\]
	Hence, applying Lemma \ref{Simple Bound} with $a=|1-t|=1-t,z=|s-tr|,b=|1+t|=1+t$ we have that
	\begin{align*}
		\frac{1}{2}(\|x+y\|_{R}+\|x-y\|_{R})-1&=\frac{1}{2}\bigg(|1+t|+\frac{(s-tr)^{2}+(1-t)^{2}}{2|s-tr|}\bigg)-1
		\\&\leq\frac{1}{2}\bigg(1+t+\frac{(1+t)^{2}+(1-t)^{2}}{2(1+t)}\bigg)-1
		\\&=\frac{t^{2}}{1+t}.
	\end{align*}
	Since $0<t\leq 1/6$, this implies that
	\[ \frac{1}{2}(\|x+y\|_{R}+\|x-y\|_{R})-1\leq\frac{t^{2}}{1+0}=t^{2}.
	\]
	
	\textbf{Case 2.} Suppose that $x\in L$ and $y/t\in C$. That is, $x=(s,1)$ for some $-1\leq s\leq 1$ and $y=(t\cos\theta+t,t\sin\theta)$ for some $-\pi/2\leq\theta\leq\pi/2$. Then
	\[ x+y=(s+t\cos\theta+t,1+t\sin\theta),\,x-y=(s-t\cos\theta-t,1-t\sin\theta).
	\]
	It is easy to check that since $0<t\leq1/6$ one cannot have both $x+y$ and $x-y\in S_{2}$, so there are only three cases to check.
	
	(2a) Suppose that $x+y,x-y\in S_{1}$. Then
	\[ \frac{1}{2}(\|x+y\|_{R}+\|x-y\|_{R})-1=\frac{1}{2}(|1+t\sin\theta|+|1-t\sin\theta|)-1=\frac{1}{2}((1+t\sin\theta)+(1-t\sin\theta))-1=0.
	\]
	(2b) Suppose that $x+y\in S_{1}$ and $x-y\in S_{2}$. This means that
	\[ |1+t\sin\theta|\geq|s+t\cos\theta+t|,\,|s-t\cos\theta-t|>|1-t\sin\theta|.
	\]
	In particular, the Triangle Inequality implies that
	\begin{align*}
		1-t\sin\theta&=|1-t\sin\theta|
		\\&\leq|s-t\cos\theta-t|
		\\&\leq|s+t\cos\theta+t|+2|t\cos\theta+t|
		\\&\leq|1+t\sin\theta|+2|t\cos\theta+t|
		\\&=1+\alpha t
	\end{align*}
	where here $\alpha=\sin\theta+2\cos\theta+2$. Hence, applying Lemma \ref{Simple Bound} with $a=|1-t\sin\theta|=1-t\sin\theta,z=|s-t\cos\theta-t|,b=1+\alpha t$ we have that
	\begin{align*}
		\frac{1}{2}(\|x+y\|_{R}+\|x-y\|_{R})-1&=\frac{1}{2}\bigg(|1+t\sin\theta|+\frac{(s-t\cos\theta-t)^{2}+(1-t\sin\theta)^{2}}{2|s-t\cos\theta-t|}\bigg)-1
		\\&\leq\frac{1}{2}\bigg(1+t\sin\theta+\frac{(1+\alpha t)^{2}+(1-t\sin\theta)^{2}}{2(1+\alpha t)}\bigg)-1
		\\&=\frac{(\alpha^{2}+\sin^{2}\theta+2\alpha\sin\theta)t^{2}}{4(1+\alpha t)}.
	\end{align*}
	Now, note that since $-\pi/2\leq\theta\leq\pi/2$ one has that
	\[ 0\leq\alpha=\sin\theta+2\cos\theta+2\leq 1+2+2=5.
	\]
	Hence
	\[ \frac{1}{2}(\|x+y\|_{R}+\|x-y\|_{R})-1\leq\frac{(5^{2}+1+2\times 5\times 1)t^{2}}{4}=9t^{2}.
	\]
	(2c) Suppose that $x+y\in S_{2}$ and $x-y\in S_{1}$. One may follow a similar argument to the above to show that again
	\[ \frac{1}{2}(\|x+y\|_{R}+\|x-y\|_{R})-1\leq9t^{2}.
	\]
	
	\textbf{Case 3.} Suppose that $x\in C$ and $y/t\in L$. That is, $x=(\cos\theta+1,\sin\theta)$ for some $-\pi/2\leq\theta\leq\pi/2$ and $y=(tr,t)$ for some $-1\leq r\leq 1$. Then
	\[ x+y=(\cos\theta+1+tr,\sin\theta+t),\,x-y=(\cos\theta+1-tr,\sin\theta-t).
	\]
	(3a) Suppose that $x+y,x-y\in S_{1}$. Since $0<t\leq1/6$, it is not hard to check that this is only possible for $\theta=\pm\pi/2$. If $\theta=\pi/2$ the fact that $x+y,x-y\in S_{1}$ implies that one must have $r=1$, and if $\theta=-\pi/2$ the fact that $x+y,x-y\in S_{1}$ implies that $r=-1$. But for $(\theta,r)=(\pi/2,1)$ and $(\theta,r)=(-\pi/2,-1)$ it is easily checked that
	\[ \frac{1}{2}(\|x+y\|_{R}+\|x-y\|_{R})-1=0.
	\]
	
	(3b) Suppose that $x+y\in S_{1}$ and $x-y\in S_{2}$. This means that
	\[ |\sin\theta+t|\geq|\cos\theta+1+tr|,\,|\cos\theta+1-tr|>|\sin\theta-t|.
	\]
	In particular, since $0<t\leq 1/6$, $-1\leq r\leq 1$ and $-\pi/2\leq\theta\leq\pi/2$, the first of these inequalities implies that either $\theta\geq 0$ or $\theta=-\pi/2$ and $r=-1$. But in the latter case we would have $x-y\notin S_{2}$. Hence we may assume from now on that $\theta\geq 0$. In this case, note that
	\[ |\sin\theta-t|\leq|\cos\theta+1-tr|\leq|\cos\theta+t+tr|+2t|r|\leq|\sin\theta+t|+2t\leq 1+3t.
	\]
	Also, since $\theta\geq 0$, we have that $(\sin\theta-t)^{2}\leq(1-t)^{2}$. Hence, applying Lemma \ref{Simple Bound} with $a=|\sin\theta-t|,z=|\cos\theta+1-tr|,b=1+3t$ we have that
	\begin{align*}
		\frac{1}{2}(\|x+y\|_{R}+\|x-y\|_{R})-1&=\frac{1}{2}\bigg(|\sin\theta+t|+\frac{(\cos\theta+t-tr)^{2}+(\sin\theta-t)^{2}}{2|\cos\theta+1-tr|}\bigg)-1
		\\&\leq\frac{1}{2}\bigg(1+t+\frac{(1+3t)^{2}+(1-t)^{2}}{2(1+3t)}\bigg)-1
		\\&=\frac{4t^{2}}{1+3t}
		\\&\leq\frac{4t^{2}}{1+3\times0}
		\\&=4t^{2}.
	\end{align*}
	
	(3c) Suppose that $x+y\in S_{2}$ and $x-y\in S_{1}$. One may follow a similar argument to the above to show that again
		\[ \frac{1}{2}(\|x+y\|_{R}+\|x-y\|_{R})-1\leq4t^{2}.
		\]
	
	(3d) Suppose that $x+y,x-y\in S_{2}$. This means that
	\[ |\cos\theta+1+tr|>|\sin\theta+t|,\,|\cos\theta+1-tr|>|\sin\theta-t|.
	\]
	Hence
	\begin{align*}
		&\frac{1}{2}(\|x+y\|_{R}+\|x-y\|_{R})-1
		\\&\quad\quad=\frac{1}{2}\bigg(\frac{(\cos\theta+1+tr)^{2}+(\sin\theta+t)^{2}}{2|\cos\theta+1+tr|}+\frac{(\cos\theta+1-tr)^{2}+(\sin\theta-t)^{2}}{2|\cos\theta+1-tr|}\bigg)-1
		\\&\quad\quad=\frac{1}{2}\bigg(\frac{(\cos\theta+1+tr)^{2}+(\sin\theta+t)^{2}}{2(\cos\theta+1+tr)}+\frac{(\cos\theta+1-tr)^{2}+(\sin\theta-t)^{2}}{2(\cos\theta+1-tr)}\bigg)-1
		\\&\quad\quad=\frac{(r(r-r\cos\theta-2\sin\theta)+\cos\theta+1)t^{2}}{2((\cos\theta+1)^{2}-t^{2}r^{2})}
		\\&\quad\quad\leq\frac{((1+1+2)+1+1)t^{2}}{2(1^{2}-t^{2})}
		\\&\quad\quad=\frac{3t^{2}}{1-t^{2}}
		\\&\quad\quad\leq\frac{3t^{2}}{1-(1/6)^{2}}
		\\&\quad\quad=\frac{108}{35}t^{2}.
	\end{align*}
	
	\textbf{Case 4.} Suppose that $x,y/t\in C$. That is, $x=(\cos\theta+1,\sin\theta)$ for some $-\pi/2\leq\theta\leq\pi/2$ and $y=(t\cos\phi+t,t\sin\phi)$ for some $-\pi/2\leq\phi\leq\pi/2$. Then
	\[ x+y=(\cos\theta+1+t\cos\phi+t,\sin\theta+t\sin\phi),\,x-y=(\cos\theta+1-t\cos\phi-t,\sin\theta-t\sin\phi).
	\]
	Note that it is not possible to have both $x+y\in S_{1}$ and $x-y\in S_{2}$ so there are only three cases to check.
	
	(4a) Suppose that $x+y,x-y\in S_{1}$. It is easy to check that it must be the case that $(\theta,\phi)=(\pi/2,\pi/2)$ or $(\theta,\phi)=(-\pi/2,-\pi/2)$. However, in both these cases
	\[ \frac{1}{2}(\|x+y\|_{R}+\|x-y\|_{R})-1=0.
	\]
	
	(4b) Suppose that $x+y\in S_{2}$ and $x-y\in S_{1}$. This means that
	\[ |\cos\theta+1+t\cos\phi+t|>|\sin\theta+t\sin\phi|,\,|\sin\theta-t\sin\phi|\geq|\cos\theta+1-t\cos\phi-t|.
	\]
	In particular this implies that
	\[ |\cos\theta+1+t\cos\phi+1|\leq|\cos\theta+1-t\cos\phi-t|+2t|\cos\phi+1|\leq|\sin\theta-t\sin\phi|+2t(\cos\phi+1).
	\]
	Hence, applying Lemma \ref{Simple Bound} with $a=|\sin\theta+t\sin\phi|,z=|\cos\theta+1+t\cos\phi+1|,b=|\sin\theta-t\sin\phi|+2t(\cos\phi+1)$ we have that
	
	\begin{align*}
		&\frac{1}{2}(\|x+y\|_{R}+\|x-y\|_{R})-1
		\\&\quad\quad=\frac{1}{2}\bigg(|\sin\theta-t\sin\phi|+\frac{(\cos\theta+1+t\cos\phi+t)^{2}+(\sin\theta+t\sin\phi)^{2}}{2|\cos\theta+1+t\cos\phi+t|}\bigg)-1
		\\&\quad\quad\leq\frac{1}{2}\bigg(|\sin\theta-t\sin\phi|+\frac{(|\sin\theta-t\sin\phi|+2t(\cos\phi+1))^{2}+(\sin\theta+t\sin\phi)^{2}}{2(|\sin\theta-t\sin\phi|+2t(\cos\phi+1))}\bigg)-1.
	\end{align*}
	
	Now, also note that the inequalities in this case and the Triangle Inequality imply that
	\[ 1-2t\leq\cos\theta+1-t(\cos\phi+1)=|\cos\theta+1-t\cos\phi-t|\leq|\sin\theta-t\sin\phi|\leq|\sin\theta|+t.
	\]
	Hence
	\[ |\sin\theta|\geq 1-3t\geq1-3\times\frac{1}{6}=\frac{1}{2}.
	\]
	So, suppose that $1/2\leq\sin\theta\leq 1$. In this case, one has that $|\sin\theta-t\sin\phi|=\sin\theta-t\sin\phi$. Hence if we put $\gamma=-\sin\phi+2(\cos\phi+1)$ then we have that
	\begin{align*}
		&\frac{1}{2}(\|x+y\|_{R}+\|x-y\|_{R})-1
		\\&\quad\quad\leq\frac{1}{2}\bigg(\sin\theta-t\sin\phi+\frac{(\sin\theta+\gamma t)^{2}+(\sin\theta+t\sin\phi)^{2}}{2(\sin\theta+\gamma t)}\bigg)-1
		\\&\quad\quad=\frac{(-2\sin\phi\gamma+\gamma^{2}+\sin^{2}\phi)t^{2}+4\gamma(\sin\theta-1)t+4\sin\theta(\sin\theta-1)}{4(\sin\theta+\gamma t)}.
	\end{align*}
	Note that
	\[ 0\leq\gamma=-\sin\phi+2(\cos\phi+1)\leq1+2\times 2=5.
	\]
	Using this and the fact that $1/2\leq\sin\theta\leq 1$ we find that
	\begin{align*}
		&\frac{1}{2}(\|x+y\|_{R}+\|x-y\|_{R})-1
		\\&\quad\quad\leq\frac{(2\times 5+5^{2}+1)t^{2}+0+0}{4\times1/2+0}
		\\&\quad\quad=18t^{2}.
	\end{align*}
	Similarly, it can be shown that if $-1\leq\sin\theta\leq-1/2$ then
		\[ \frac{1}{2}(\|x+y\|_{R}+\|x-y\|_{R})-1\leq18t^{2}
		\]
	
	(4c) Suppose that $x+y,x-y\in S_{2}$. This means that
	\[ |\cos\theta+1+t\cos\phi+t|>|\sin\theta+t\sin\phi|,\,|\cos\theta+1-t\cos\phi-t|>|\sin\theta-t\sin\phi|.
	\]
	Hence
	\begin{align*}
		&\frac{1}{2}(\|x+y\|_{R}+\|x-y\|_{R})-1
		\\&\quad=\frac{1}{2}\bigg(\frac{(\cos\theta+t\cos\phi+1+t)^{2}+(\sin\theta+t\sin\phi)^{2}}{2|\cos\theta+t\cos\phi+1+t|}
		\\&\quad\quad\quad+\frac{(\cos\theta-t\cos\phi+1-t)^{2}+(\sin\theta-t\sin\phi)^{2}}{2|\cos\theta-t\cos\phi+1-t|}\bigg)-1
		\\&\quad=\frac{1}{2}\bigg(\frac{(\cos\theta+t\cos\phi+1+t)^{2}+(\sin\theta+t\sin\phi)^{2}}{2(\cos\theta+t\cos\phi+1+t)}
		\\&\quad\quad\quad+\frac{(\cos\theta-t\cos\phi+1-t)^{2}+(\sin\theta-t\sin\phi)^{2}}{2(\cos\theta-t\cos\phi+1-t)}\bigg)-1
		\\&\quad=\frac{(-(\cos\theta+1)(\cos\phi+1)^{2}+(\cos\theta+1)\sin^{2}\theta+2(\cos\phi+1)^{2}-2\sin\theta\sin\phi(\cos\theta+1))t^{2}}{2((\cos\theta+1)^{2}-t^{2}(\cos\phi+1)^{2})}
		\\&\quad\leq\frac{(0+2\times 1^{2}+2\times2^{2}+2\times 2)t^{2}}{2(1^{2}-2^{2}\times t^{2})}
		\\&\quad=\frac{5t^{2}}{1-4t^{2}}
		\\&\quad\leq\frac{5t^{2}}{1-4\times(1/6)^{2}}
		\\&\quad=\frac{63}{8}t^{2}.
	\end{align*}
\end{proof}

By Theorem \ref{Characterisation}, the above two propositions allow us to conclude the following about the maximal roundness of $R$ and $R^{\ast}$.

\begin{cor}
	Let $R$ be the racetrack space. Then $\mr(R)>1$ and $\mr(R^{\ast})=1$. In particular, $\mr(R)\neq\mr(R^{\ast})$.
\end{cor}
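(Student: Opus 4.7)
The proof is a direct application of Theorem~\ref{Characterisation} to the two asymptotic estimates just established for $\rho_R$ and $\rho_{R^{\ast}}$. Since all of the technical work is already contained in the preceding two propositions, the only task is to convert the local estimates into global power-type statements (or the impossibility thereof).

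My plan for $\mr(R)>1$: the proposition above gives $\rho_R(t)\leq 18t^2$ for $0\leq t\leq 1/6$. To apply Theorem~\ref{Characterisation} we need a global bound of the form $\rho_R(t)\leq Ct^2$ valid on all of $[0,\infty)$. For $t\geq 1/6$ I would use the trivial inequality $\rho_R(t)\leq t$ (an immediate consequence of the triangle inequality applied in the definition of $\rho_R$) together with $t\leq 6t^2$ whenever $t\geq 1/6$, obtaining $\rho_R(t)\leq 6t^2$ in that range. Combining the two ranges gives $\rho_R(t)\leq 18t^2$ for every $t\geq 0$, i.e.\ $\rho_R$ is of power type $2$. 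By Theorem~\ref{Characterisation} we conclude $\mr(R)>1$.

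My plan for $\mr(R^{\ast})=1$: the other proposition shows that $\rho_{R^{\ast}}(t)\geq t/2$ for all sufficiently small $t>0$. Suppose, toward a contradiction, that $\mr(R^{\ast})>1$. Then Theorem~\ref{Characterisation} would supply some $p>1$ and some $C>0$ with $\rho_{R^{\ast}}(t)\leq Ct^p$ for all $t\geq 0$. Combining the two estimates would give $t/2\leq Ct^p$, i.e.\ $1/2\leq Ct^{p-1}$, for all small $t>0$, which fails as $t\to 0^+$ since $p-1>0$. This contradiction forces $\mr(R^{\ast})=1$. The final assertion $\mr(R)\neq\mr(R^{\ast})$ is then immediate.

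There is no real obstacle; the genuine difficulty lived entirely in the case analysis of the previous proposition for $R$. The only micro-issue is the extension of the local bound on $\rho_R$ to a global power-type bound, which is handled by the elementary triangle-inequality upper bound $\rho_R(t)\leq t$ on $[1/6,\infty)$.
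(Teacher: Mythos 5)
Your argument is correct and is exactly the route the paper takes: the corollary is deduced by feeding the two propositions on $\rho_{R}$ and $\rho_{R^{\ast}}$ into Theorem~\ref{Characterisation}, with the paper leaving implicit the small details you spell out (extending $\rho_{R}(t)\leq 18t^{2}$ from $[0,1/6]$ to all $t\geq 0$ via $\rho_{R}(t)\leq t$, and ruling out any power type $p>1$ for $\rho_{R^{\ast}}$ from the lower bound $t/2$, so that $\mr(R^{\ast})=1$ since maximal roundness is always at least $1$). No gaps.
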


\begin{rem}
	Finally, we remark that the above example can also be used to show that the minimal coroundness of a space is not always equal to the minimal coroundness of its dual. Indeed, in light of the above corollary, an application of Proposition \ref{Duality} shows that $\mc(R)=\infty$ and $\mc(R^{\ast})<\infty$, and hence $\mc(R)\neq\mc(R^{\ast})$.
\end{rem}

\section*{Acknowledgments}

The first author was partially supported by the grant from IPM (No. 96470412). The work of the third author was supported by the Research Training Program of the Department of Education and Training of the Australian Government.

\vskip 5mm

\end{document}